\numberwithin{equation}{section}
\newtheorem{theorem}{Theorem}[section]
\newtheorem{lemma}[theorem]{Lemma}
\newtheorem{corollary}[theorem]{Corollary}
\newtheorem{proposition}[theorem]{Proposition}
\theoremstyle{definition}
\theoremstyle{remark}
\newcommand{\jump}[1]{\left[\hspace{-0.025in}\left[#1\right]\hspace{-0.025in}\right]}
\newcommand{\mVh}{\mathring{V}_h}
\newcommand{\Pe}{Pe}
\newcommand{\Co}{Co}
\title[]{Implicit-explicit multistep formulations for finite element
  discretisations using continuous interior penalty }
\author[]{Erik Burman}
\address{Department of Mathematics
University College London
}
\email{e.burman@ucl.ac.uk}
\author[]{Johnny Guzm\'an}
\address{Division of Applied Mathematics
Brown University
Providence, USA}
\email{johnny\_guzman@brown.edu}
\begin{document}

\maketitle

\begin{abstract}
We consider a finite element method with symmetric stabilisation for
the discretisation of the transient convection--diffusion
equation. For the time-discretisation we consider either the second order
backwards differentiation formula or the Crank-Nicolson method. Both the convection term and the
associated stabilisation are treated explicitly using an extrapolated
approximate solution. We prove stability of the method and the
$\tau^2 + h^{p+{\frac12}}$ error estimates for the $L^2$-norm under
either the standard hyperbolic CFL condition, when piecewise affine ($p=1$)
approximation is used, or in the case of finite element approximation of order $p \ge 1$, a stronger, so-called $4/3$-CFL, i.e. $\tau \leq C
h^{4/3}$. The theory is illustrated with some numerical examples.
\end{abstract}



\section{Introduction}
In the computational solution of convection--diffusion problems it is
highly advantageous to treat the convection term explicitly and the
diffusion term implicitly. Although there is an important literature
on the topic there seems to be very few works that show that
implicit-explicit (IMEX) 
methods are robust under finite element discretisation both in the convection and the diffusion dominated
regimes. Indeed most works on IMEX methods with finite element
discretisations assume that the diffusion
dominates, typically leading to an assumption that the product of the
Courant number and the Peclet number is small \cite{Crouz80,Va80,ACM98,Akri13, Akri18}. Most other works on
IMEX methods typically consider a stability region type analysis that
is unsuitable for a quantitative finite element analysis \cite{ARW95,
  ARS97, Hund98, Hund01}. To pass from the analysis of the
semi-discrete case to a fully discrete case typically requires the use
of energy methods as advocated in \cite{LT98}. This was the route taken in 
\cite{BE12} where a second order implicit explicit Runge-Kutta method was
considered for $H^1$-conforming finite element approximations of convection--diffusion
equations using interior penalty to guarantee stability in the high
Peclet regime and more recently \cite{WZS19} where a local discontinuous
Galerkin methods with 2nd and 3rd order Runge-Kutta IMEX schemes are considered. Explicit Runge-Kutta methods have been very successful for the
approximation of hyperbolic equations often in combination with
discontinuous Galerkin methods and for this case there is a very large
literature \cite{CS89, ZS04,BEF10,ZS10}, to name a few. In particular the mass matrix
is block diagonal allowing for high order explicit time stepping, this
however is no longer the case for IMEX methods where the system matrix
associated to diffusion couples globally. Moreover if the target application is
the incompressible Navier-Stokes' equations, the explicit methods are
unsuitable and methods with many implicit
stages may become costly. It is not clear that the analysis of
\cite{BE12} generalises to this case, since it is assumed there that
the operator treated implicitly is symmetric and
elliptic. For a second order scheme the RK IMEX method has three
stages and is known to impose compatibility
conditions on the exact solution for consistency to
hold (see \cite[Section 3]{BE12}, for a discussion in the case of
convection--diffusion equations).  Finite element-IMEX methods with extrapolation were  considered as early as 1970 by Douglas and Dupont \cite{douglas1970galerkin}, however,  they considered  diffusion dominated problems.
Implicit-explicit methods with extrapolation for the
incompressible Navier-Stokes equations and backward differentiation
used for time discretisation were analysed in \cite{BDV82}, and some
other multi-step IMEX methods together with the local Galerkin method
was considered in \cite{WLZS19},
but the mesh Peclet number, defined by 
$$Pe := \frac{|u| h}{\mu}$$
where $u$ denotes the fluid velocity, $\mu$ the viscosity coefficient
and $h$ the local mesh size,
is assumed to be small. In many applications such as
  large eddy simulation or under resolved DNS it may not be possible
  to satisfy such a condition, nevertheless for such problems, in which
  convective effects are strong, the use of IMEX schemes is very
  attractive, since the nonlinearity and stabilization terms are
  handled explicitly, whereas the velocity-pressure coupling, which is
  implicit can be solved using optimized methods for saddle point
  problems.
As a first step towards IMEX schemes for the equations of
incompressible flow we will in this paper consider the
convection-diffusion equation and analyse some known IMEX schemes with
respect to their stability properties for varying mesh Peclet number.
For an IMEX scheme to be stable for high mesh
Peclet number it has to degenerate to a stable explicit scheme in the
limit of vanishing diffusion. Such time integrator are typically
characterized by nontrivial imaginary stability boundary. Examples are
given by Adams-Bashforth (AB) integrators of higher order such as AB3,
AB4, AB7 and AB8 (see \cite{GFR15}). Unfortunately IMEX schemes
designed using methods popular for the solution of incompressible flow
problems such as the Crank-Nicolson scheme or the second order
backward differentiation scheme do not enjoy this
property, if the convection is treated explicitly using
extrapolation. Indeed in this case the limit schemes are the second order Adams-Bashforth scheme for pure
transport and the extrapolated Gear scheme \cite{Va80} (BDF2 with
extrapolated convcetion). Both have
trivial imaginary stability boundary and would therefore seem
unsuitable candidates for Peclet robust IMEX methods. Nevertheless in
this contribution we will consider these two schemes with a finite
element space discretization stabilized using continuous interior
penalty. Observe that the explicit treatment of the stabilization
is appealing since it avoids having to
handle the extended stencil on the level of the linear solver \cite{BF09}.
We use energy methods to prove that they are stable, irrespective of
the Peclet number,
under suitable CFL conditions. The stability however depends on the space
discretization. Both the polynomial order of the approximation space
and the stabilization of the convection operator come into play. Using
the additional stability of the gradient penalty operator we show
optimal error estimates for the material derivative and $L^2$-error
error estimates with the classical order $O(h^{p+\frac12})$ (where $p$
is the polynomial degree), which is known
to be the best that can be obtained for continuous FEM in the general case.
 Observe also that since our results are robust with respect to the Peclet
number they remain valid for the case of vanishing diffusion, i.e. the pure transport
equation. So the present work also give the first analysis of these
explicit methods together with stabilized FEM for the transport equation.

We only consider the case of continuous approximation spaces herein,
but the analysis carries over to the case of discontinuous Galerkin
symmetric interior penalty methods, with upwind fluxes in a straightforward fashion (we
refer to \cite{BEF10} for a uniform treatment of continuous and
discontinuous Galerkin methods in the case of explicit Runge-Kutta
methods).

The outline of the paper is as follows. In the next section we
introduce our model problem, define the finite element spaces and
prove some technical results. In section \ref{sec:bdf2} we introduce
the BDF2-IMEX method  and derive stability results in all regimes
and for all polynomial orders. This allows us to prove a priori error
estimates in section \ref{sec:apriori}. Here we consider the high
Peclet regime only and derive error estimates for the $L^2$-error at
final time and the error in the material derivative over the
space-time domain. In section \ref{sec:crank} we introduce the Crank-Nicolson IMEX
scheme and prove stability estimates in all regime and for all
polynomial orders. Note that it is then straightforward to derive similar error
estimates as for the BDF2-IMEX scheme in the high Peclet regime for
the Crank-Nicolson scheme and
also optimal estimates in $l^2(0,T;H^1(\Omega))$, or
$l^\infty(0,T;L^2(\Omega))$ for both methods, but to keep the length of the paper reasonable these
results have not been included here. The paper finishes with some
numerical experiments (section \ref{sec:numerics}), validating the theory and
showing the robustness of the methods in the presence of non-smooth data.
\section{Preliminaries}

\subsection{Convection--diffusion problem}
Let $\Omega \subset \mathbb{R}^d$, $d=1,2,3$ be an open polygonal
domain with boundary $\partial \Omega$ and outward pointing normal $n$. Let $I:= (0,T)$ and denote the space time domain by $Q =
\Omega \times I$.
We consider the convection diffusion equation,
\begin{subequations}
\begin{alignat}{2}\label{eq:convdiff}
\partial_t u + \beta \cdot \nabla u - \mu \Delta u = & f \quad && \mbox{ in }
                                                        Q \\
u(\cdot, 0) =& u_0 \quad && \mbox{ in }
                                                        \Omega \\
u =& 0 \quad && \mbox{ on } \partial \Omega.
\end{alignat}
\end{subequations}
Here $f \in L^2(\Omega)$, $u_0 \in H^1_0(\Omega)$, $\beta \in
[H^1_0(\Omega)\cap W^{1,\infty}(\Omega)]^d$, with $\nabla \cdot
\beta=0$, $\beta \cdot n\vert_{\partial \Omega} = 0$. This is a parabolic
problem and it is known to admit a unique solution in
$L^2(0,T;H^1_0(\Omega)) \cap L^\infty(0,T;L^2(\Omega))$.
We define the forms
\[
(u,v)_\Omega :=\int_\Omega uv ~ \mbox{d}x, \quad c(w,v) :=\int_\Omega \beta \cdot \nabla w v ~\mbox{d}x,\quad a(w,v) := \int_\Omega \mu \nabla w \cdot 
\nabla v~\mbox{d}x.
\]
Assuming sufficient smoothness of the solution the equation may then
be cast on the weak formulation,
\begin{subequations}\label{weaku}
\begin{alignat}{2}
(\partial_t u, v)_\Omega + c(u,v)  + a(u,v) =& (f,v)_{\Omega}, \quad && \forall v \in
H^1_0(\Omega), \, t>0, \\
(u(\cdot,0),v)_\Omega =& (u_0,v)_\Omega, \quad  && \forall v \in
H^1_0(\Omega).
\end{alignat}
\end{subequations}
We will use the following two norms $\| \cdot\|^2=(\cdot, \cdot)_{\Omega}$ and $\|v\|_{\infty}=\sup_{ x \in \overline{\Omega}} |v(x)|$. 

\subsection{Finite element spaces and bilinear forms}
Let $\{\mathcal{T}_h\}_h$ denote a family of shape regular, quasi
uniform, triangulation of  $\Omega$ into simplices. The set of interior faces
of $\mathcal{T}_h$ will be denoted $\mathcal{F}$. Let $V_h$ denote the
space of continuous finite element functions of polynomial degree less
than or equal to $p$:
\begin{equation*}
V_h:=\{ v \in H^1(\Omega): v\in \mathcal{P}_p(T), \forall T \in \mathcal{T}_h\}. 
\end{equation*}
 We also consider the space with homogenuous boundary conditions $\mVh = H^1_0(\Omega) \cap V_h$.  We let $\pi_h$ be the $L^2$ projection onto $\mVh$ given by
 \begin{equation}\label{eq:L2proj}
 (\pi_h w, v_h)_{\Omega}=(w, v_h)_{\Omega} \quad \forall v_h \in \mVh.  
 \end{equation}
 We recall the following approximation estimate that holds on
 quasi-uniform meshes
\begin{equation}\label{eq:L2approx}
\|u - \pi_h u\| + h \|\nabla (u - \pi_h u) \| \leq C h^{p+1}|u|_{H^{p+1}(\Omega)}.
\end{equation}
We will also make use of the piece-wise constant space 
\begin{equation*}
W_h:=\{ v \in L^2(\Omega): v\in \mathcal{P}_0(T), \forall T \in \mathcal{T}_h\}. 
\end{equation*}
 We let $P_0:L^2(\Omega) \rightarrow W_h$ be the $L^2$ orthogonal projection:
 \begin{equation*}
 (P_0 w, v_h)_{\Omega}=(w, v_h)_{\Omega} \quad \forall v_h \in W_h.  
 \end{equation*}

In order to stabilize a FEM we need the following bilinear form (see for example \cite{BE07})
\begin{equation}\label{eq:stab_form}
s(w_h,v_h) := \sum_{F \in \mathcal{F}} \int_F h_F^2 (|\beta \cdot n|+\varepsilon_{\beta^\perp})
\jump{\nabla w} \cdot \jump{\nabla v} ~\mbox{d}s,
\end{equation}
where we introduce the jump of the gradient
$$
\jump{\nabla w}\vert_F:=\lim_{\epsilon \rightarrow 0^+} \nabla w(x - \epsilon n_T) \cdot n_{T} +  \nabla w(x - \epsilon n_{T'}) \cdot n_{T'},
\mbox{ with } x\in F \quad \mbox{ and } F = \bar T \cap \bar T'
$$
and $\varepsilon_{\beta^\perp}\ge 0$ is a coefficient that
introduces some weakly consistent cross wind diffusion when non-zero.
We will also apply the jump to scalar quantities below in which case
it is defined by
\[
\jump{w}\vert_F = \lim_{\epsilon \rightarrow 0^+} w(x - \epsilon n_F)
- w(x+\epsilon n_F) \mbox{ with } x\in F
\]
where $n_F$ is a fixed but arbitrary normal to the face $F$.
We may then define the semi-norm
\begin{equation*}
|v|_s := s(v,v)^{\frac12}.
\end{equation*}

We let let $\beta_0$ be the Raviart-Thomas projection of $\beta$ to the lowest order Raviart-Thomas space. Since $\nabla \cdot \beta=0$ we have that $\beta_0$ is piecewise constant. Thus, we have 

\begin{alignat}{1}
\|\beta -\beta_0\|_{\infty} \le C h \|\nabla \beta\|_{\infty}.\label{beta0}
\end{alignat}

Then recall a critical approximation result that exhibits the
importance of the stabilization term, this follows from the local
estimate of \cite[Lemma 5.3]{BE07}:
\begin{equation}\label{interpinq}
\inf_{ v_h \in V_h} \| \beta_0 \cdot \nabla w_h - v_h\|^2 \leq C\,\sum_{F \in \mathcal{F}} h_F 
\|\jump{\beta_0 \cdot \nabla w_h} \|^2_F. 
\end{equation}
Note that since $\beta_0 \cdot n\vert_{\partial \Omega} = \beta \cdot
n\vert_{\partial \Omega}=0$ we have $\beta_0 \cdot \nabla
w_h\vert_{\partial \Omega} = 0$ and therefore \eqref{interpinq} holds taking the
infimum over the space $\mVh$, i.e.
\begin{equation}\label{interpinq2a}
\inf_{ v_h \in \mVh} \| \beta_0 \cdot \nabla w_h - v_h\|^2 \leq C\,\sum_{F \in \mathcal{F}} h_F 
\|\jump{\beta_0 \cdot \nabla w_h} \|^2_F. 
\end{equation}
Using \eqref{interpinq} together with \eqref{beta0} it is
straighforward to show that
\begin{equation}\label{interpinq2}
\inf_{ v_h \in \mVh} \|\beta \cdot \nabla w_h - v_h\| \leq
C\,(\|\nabla \beta\|_{\infty} \|w_h\|+ \left(\frac{\|\beta\|_{\infty}}{h}\right)^{\frac12}|w_h|_s)
\end{equation}

\flushleft
Indeed, we first add and subtract $\beta_0$ and apply the triangle inequality
\[
\|\beta \cdot \nabla w_h - v_h\| \leq \|(\beta - \beta_0)\cdot \nabla w_h\|+\| \beta_0 \cdot \nabla w_h- v_h\|.
\]
Then using \eqref{beta0}, an inverse inequality and \eqref{interpinq}
\[
\|(\beta - \beta_0)\cdot \nabla w_h\|+\| \beta_0 \cdot \nabla w_h-
v_h\| \leq C ( \|\nabla \beta\|_{\infty} \|w_h\| + (\sum_{F \in \mathcal{F}} h_F 
\|\jump{\beta_0 \cdot \nabla w_h} \|^2_F)^{\frac12}).
\]
Adding and subtracting $\beta$ in the second term and using a trace
inequality followed by \eqref{beta0},
\begin{alignat*}{1}
 \sum_{F \in \mathcal{F}} h_F 
\|\jump{\beta_0 \cdot \nabla w_h} \|^2_F \le & 
 C (\|(\beta -
\beta_0)\cdot \nabla w_h\|^2 +  \|\nabla \beta\|_{\infty}^2 \|w_h\|^2 +
 \sum_{F \in \mathcal{F}} h_F \|\jump{\beta \cdot \nabla w_h} \|_F^2) \\
\le & C ( \|\nabla \beta\|_{\infty}^2 \|w_h\|^2 +  \sum_{F \in \mathcal{F}} h_F \|\jump{\beta \cdot \nabla w_h} \|_F^2).
\end{alignat*}
Using the continuity of $w_h$ in the last term of the right hand side
we see that
\[
\|\jump{\beta \cdot \nabla w_h} \|_F = \|\beta \cdot n\jump{\nabla
  w_h} \|_F \leq \|\beta\|_\infty^{\frac12}  \||\beta \cdot n|^{\frac12}\jump{\nabla
  w_h} \|_F.
\]
Hence, we have shown \eqref{interpinq2}.

We can then defined the stabilised convection form
\[
c_h(w_h,v_h) := c(w_h,v_h) + \gamma s(w_h,v_h).
\]

Introducing $\tau$ as the time step size, we also define Courant
number $Co$ that will either be the standard hyperbolic CFL, $\Co :=
(\|\beta\|_{\infty}+1) \frac{\tau}{h}$,
or a slightly stronger $4/3$-Courant number (see \cite{ZS04, BEF10}
where it was used in the context of second order Runge-Kutta methods), $Co_{4/3} := 
\tau (\|\beta\|_\infty/h)^{\frac43}$, that will apply for
finite element spaces or polynomial degrees higher than $1$.
 Observe that $Co$ is a free parameter that can be made
as small as we like by making $\tau$ small relative to $h$ and
$\beta$. The crucial point is that certain time residual terms from
the convection term can be made as small as necessary by fixing $Co$
to be small enough. This is expressed in boundedness properties of the
convection and the associated stabilization that we now summarize. 
First note that by the skew symmetry of the convection we have the positivity
\begin{equation}\label{cgamma}
\gamma |v|_s^2 = c_h(v,v), \quad\forall v \in  H_0^1(\Omega)\cap
H^{\frac32+\epsilon}(\Omega) + \mVh,
\end{equation}
and by skew-symmetry followed by the Cauchy-Schwarz inequality, an
inverse inequality and the definition of $Co$ we have the positivity
\begin{equation}\label{eq:c_cont}
\tau c(v,w_h) \leq C_i Co \|v\| \|w_h\| \quad \forall v \in H_0^1(\Omega), w_h \in V_h,
\end{equation}
where $C_i$ the constant of an inverse inequality. 
Similarly for the stabilisation norm we have the bound
\begin{equation}\label{eq:s_cont}
\tau^{\frac12} |w_h|_s \leq C_i Co^{\frac12} \|w_h\| \quad \forall w_h
\in V_h.
\end{equation}
For the analysis we introduce a projection operator $C_h:H^1(\Omega)
\mapsto \mVh$ defined by
\[
(C_h v, v_h)_{\Omega} = c(v,v_h), \quad \forall v_h \in \mVh.
\]
Note that by the definition of $c$, $(C_h w_h, v_h)_{\Omega} = -(w_,C_h v_h)_{\Omega}$
for $w_h,v_h \in \mVh$. Using \eqref{eq:c_cont} it is straightforward to show that the $C_h$ operator satisfies the bound
\begin{equation}\label{eq:43cont}
\tau \|C_h v\| \leq C_i \tau^{\frac14} Co^{\frac34}_{4/3} \|v\|.
\end{equation}

\begin{proof} (inequality \eqref{eq:43cont})
\[
\tau^2 (C_h v, C_h v ) = \tau^2 c(v,C_h v) =  - \tau^2 c(C_h v,v) \leq
\tau^2 h^{-1} \|\beta\|_\infty C_i \|C_h v\| \|v\|.
\]
Therefore,
\[
\tau \|C_h v\| \leq \tau h^{-1} \|\beta\|_\infty C_i \|v\|,
\]
but $\tau =\tau^{\frac14} Co_{4/3}^{\frac34} h/\|\beta\|_\infty$ by which the claim follows.
\end{proof}

We also notice that 
\begin{equation}\label{ChCo}
\tau \|C_h v\| \leq C_i \Co \|v\|.
\end{equation}

\subsection{Operators for time discretization}
We define the second order backward differentiation operator
\begin{equation}\label{eq:bdf2}
D_{\tau} v^{n+1} := \frac{3 v^{n+1} - 4 v^n + v^{n-1}}{2 \tau}.
\end{equation}
We recall the second order extrapolation $\tilde v^{n+1} = 2 v^n -
v^{n-1}$, and the increment operator $\delta$ such that $\delta v^{n+1}
:= v^{n+1} - v^n$. Observe that there holds 
\begin{equation}\label{tildedelta}
\tilde v^{n+1}-v^{n+1} = \delta v^n - \delta v^{n+1} = -\delta \delta v^{n+1}.
\end{equation}
We also recall that
\begin{equation}\label{Dtau}
\tau D_{\tau} v^{n+1} = \delta v^{n+1}+ \frac{1}{2} \delta \delta v^{n+1}.
\end{equation}
Finally, we also observe that
\begin{equation}\label{diff_stab}
\|\delta^m v^n\| \leq 2 \sum_{i=0}^{m} \|v^{n-i}\|,
\quad m=1,2, \, \mbox{ and }2 \leq n \leq N.
\end{equation}

As we will describe in a later section, for the Crank-Nicolson method the approximation of the time derivative
is given by the scaled increment operator 
$\tau^{-1}  \delta v^{n+1}$. The extrapolation is taken to the time
level $t^{n+1/2}$, in order to approximate the central difference in
time that is the key feature of the Crank-Nicolson scheme, $\hat{v}^{n+1} = \frac32 v^n -
\frac12 v^{n-1}$.

For the time discretization part of the error analysis we need some well known results on truncation
error analysis of finite difference operators that we collect in the
following proposition for future reference. These results are
standard and can be found for instance in the monography \cite{Thom06}, but for
completeness we sketch the proofs.
\begin{proposition}
Let $u^{n} := u(t^n)$ and $y:= \beta\cdot \nabla u$ then there holds
\begin{equation}\label{eq:BDFtime_trunc}
\|D_\tau u^{n+1} - \partial_t u^{n+1}\|^2 \leq C \tau^3 \|u_{ttt}\|^2_{L^2(t^{n-1},t^{n+1};L^2(\Omega))};
\end{equation}
\begin{equation}\label{eq:BDFextra}
\|\beta(t^{n+1}) \cdot \nabla u^{n+1} - \tilde y^{n+1}\|^2 \leq C \tau^{3} \|(\beta \cdot \nabla u)_{tt}\|^2_{L^2(t^{n-1},t^{n+1};L^2(\Omega))}.
\end{equation}

\end{proposition}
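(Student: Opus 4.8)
The plan is to treat both inequalities as classical finite-difference consistency estimates, derived from Taylor expansion with the integral (Peano) form of the remainder about the node $t^{n+1}$, followed by a Cauchy--Schwarz step in the time variable that converts the remainder into the $L^2$-in-time norms on the right-hand sides. All expansions are carried out pointwise in $x$, and the spatial $L^2(\Omega)$ norm is then brought inside the time integral via Minkowski's integral inequality.

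For \eqref{eq:BDFtime_trunc} I would expand $u^n=u(t^{n+1}-\tau)$ and $u^{n-1}=u(t^{n+1}-2\tau)$ to second order about $t^{n+1}$, with remainders $R_1=-\tfrac12\int_{t^n}^{t^{n+1}}(s-t^n)^2 u_{ttt}(s)\,\mbox{d}s$ and, analogously, $R_2$ over the interval $(t^{n-1},t^{n+1})$. The whole point of the BDF2 weights $(3,-4,1)$ is that the zeroth-order terms cancel and the $\partial_{tt}u^{n+1}$ contributions cancel as well, leaving $3u^{n+1}-4u^n+u^{n-1}=2\tau\,\partial_t u^{n+1}+(R_2-4R_1)$. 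Dividing by $2\tau$ isolates the error as $D_\tau u^{n+1}-\partial_t u^{n+1}=(R_2-4R_1)/(2\tau)$. It then remains to bound $\|R_i\|$: Cauchy--Schwarz in time against $\int (s-t^n)^4\,\mbox{d}s=\tau^5/5$ gives $\|R_i\|^2\le C\tau^5\|u_{ttt}\|^2_{L^2(t^{n-1},t^{n+1};L^2(\Omega))}$, and dividing by $2\tau$ and squaring produces the claimed $\tau^3$.

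For \eqref{eq:BDFextra} the same mechanism applies to $y=\beta\cdot\nabla u$, one order lower. Writing $y^{n+1}=\beta(t^{n+1})\cdot\nabla u^{n+1}$ and expanding $y^n,y^{n-1}$ to first order about $t^{n+1}$ with remainders $R_1',R_2'$ in $y_{tt}$, the extrapolation weights $(2,-1)$ reproduce $y^{n+1}$ exactly up to the remainder, $\tilde y^{n+1}=2y^n-y^{n-1}=y^{n+1}+(2R_1'-R_2')$, so the error is simply $R_2'-2R_1'$. Here the Cauchy--Schwarz step uses $\int (s-t^n)^2\,\mbox{d}s=\tau^3/3$, giving $\|R_i'\|^2\le C\tau^3\|y_{tt}\|^2_{L^2(t^{n-1},t^{n+1};L^2(\Omega))}$ directly; no division by $\tau$ occurs, since the extrapolation already has the right scaling, and this is exactly the stated bound.

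There is no deep obstacle here; the statement is a pure consistency computation and the only thing to watch is the bookkeeping of the powers of $\tau$, where the remainders contribute $\tau^{5/2}$ (resp.\ $\tau^{3/2}$) in the spatial norm and the $D_\tau$ denominator accounts for one extra factor $\tau^{-1}$. The one delicate point is that the Cauchy--Schwarz estimate must be applied to the pointwise-in-space remainder (via Minkowski's inequality) so that $u_{ttt}$, resp.\ $(\beta\cdot\nabla u)_{tt}$, enters through its $L^2(t^{n-1},t^{n+1};L^2(\Omega))$ norm rather than through an $L^\infty$-in-time norm; this is what keeps the estimate compatible with the limited time regularity available for $u$. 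In the spirit of the ``sketch'' flagged before the proposition, I would record the two Taylor identities and the two remainder formulas and leave the elementary integral estimates to the reader.
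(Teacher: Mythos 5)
Your argument is correct and is essentially the paper's own: both proofs rest on the exactness of the BDF2 weights (resp.\ the extrapolation weights) on quadratic (resp.\ linear) Taylor polynomials, with the integral-form remainder estimated by Cauchy--Schwarz in time to produce the $L^2(t^{n-1},t^{n+1};L^2(\Omega))$ norms and the stated powers of $\tau$. The only cosmetic differences are that you expand about $t^{n+1}$ while the paper expands about $t^{n-1}$, and that for \eqref{eq:BDFextra} the paper writes the defect as the double integral $\int_{t^{n-1}}^{t^{n}}\int_{t}^{t+\tau} y_{tt}(s)\,ds\,dt$ rather than via Taylor remainders; neither change affects the bookkeeping.
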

\begin{proof}
We first consider the bound \eqref{eq:BDFtime_trunc}
We wish to bound
\[
\|D_\tau u^{n+1} - \partial_t u^{n+1}\|.
\]
Using Taylor development we may write
\[
u(t) = u^{n-1} + (t-t^{n-1}) \partial_t u^{n-1} + \frac12 (t-t^{n-1})^2 \partial^2_t
u^{n-1} + \underbrace{\frac12 \int_{t_{n-1}}^t (t-s)^2 u^{(3)}(s) \,ds}_{R(t)} = Q(t) + R(t).
\]
Deriving and evaluating at $t^{n+1}$ we see that
\[
Q'(t) = \partial_t u^{n-1} + (t-t^{n-1}) \partial^2_t
u^{n-1}, \quad Q'(t^{n+1}) = \partial_t u^{n-1} + 2 \tau \partial^2_t
u^{n-1}.
\]
We also have
\[
\tau^{-1} \delta Q^{n+1} =\partial_t u^{n-1} + \frac{1}{2\tau}
((t^{n+1}-t^{n-1})^2 - (t^{n}-t^{n-1})^2) \partial^2_t
u^{n-1} = \partial_t u^{n-1} + \frac{3 \tau}{2} \partial^2_t
u^{n-1}
\]
and
\begin{multline*}
\tau^{-1}\delta^2 Q^{n+1} = \underbrace{((t^{n+1}-t^{n-1}) - 2 (t^n
  - t^{n-1})) \partial_t u^{n-1}}_{=0} \\
+ \frac{1}{2\tau} ((t^{n+1}-t^{n-1})^2
- 2(t^{n}-t^{n-1})^2)  \partial^2_t
u^{n-1} = \tau^2 \partial^2_t
u^{n-1}.
\end{multline*}
Using \eqref{Dtau} we see that
\[
D_\tau Q^{n+1}  = \partial_t u^{n-1} +  \frac{3 \tau}{2} \partial^2_t
u^{n-1} + \frac{\tau}{2} \partial^2_t
u^{n-1}= \partial_t u^{n-1} +  2 \tau \partial^2_t
u^{n-1} = Q'(t^{n+1}).
\]
Therefore
\[
\|D_\tau u^{n+1} - \partial_t u^{n+1}\|^2 = \|\underbrace{D_\tau Q - Q' }_{=0}+ D_\tau R^{n+1}
- \partial_t R^{n+1}\|^2 \leq \frac{C}{\tau^2} \sum_{k=n-1}^{n+1}
\|R^k\|^2+2 \|\partial_t R^{n+1}\|^2.
\]
By the definition of $R$ and the Cauchy-Schwarz inequality:
\[
 \frac{1}{\tau^2} \sum_{k=n-1}^{n+1} \|R^k\|^2 \leq 
 C \tau^3 \int_{t^{n-1}}^{t^{n+1}} \|u^{(3)}\|^2 ~ds.
\]
Finally 
\[
\partial_t R^{n+1} = \int_{t^{n-1}}^{t^{n+1}} (t^n - s) u^{(3)}(s) ~ds 
\]
and therefore in a similar fashion
\[
 \|\partial_t R^{n+1}\|^2 \leq \tau^3 \int_{t^{n-1}}^{t^{n+1}} \|u^{(3)}\|^2 ~ds,
\]
which gives \eqref{eq:BDFtime_trunc}.
The result  \eqref{eq:BDFextra} easily follows after we apply the Cauchy-Shwarz inequality 
\begin{equation}\label{eq:T2}
\|y^{n+1} - \tilde y^{n+1}\|^2 = \|\int_{t^{n-1}}^{t^{n}} \int_{t}^{t + \tau} y_{tt}(s)
ds\, dt \|^2 \leq \tau^3 \int_{t_{n-1}}^{t^{n+1}} \|y_{tt}(s)\|^2 ds.
\end{equation}

\end{proof}
\section{The BDF2-IMEX Method}
We may write the BDF2-IMEX finite element method as follows.  
\flushleft
Find $u_h^{n+1} \in \mVh$ such that for $n \ge 1$,
\begin{equation}\label{eq:scheme}
(D_{\tau} u_h^{n+1},v_h)_\Omega +c_h(\tilde{u}_h^{n+1}, v_h)+a(u_h^{n+1}, v_h) 
= L^{n+1}(v_h),\quad \forall v_h \in \mVh,
\end{equation}
where $u_h^0, u_h^1$ are given. Here $\{L^n\}$ are a bounded linear operator on  $\mVh$.

\subsection{Stability of BDF2-IMEX}\label{sec:bdf2}
In the diffusion dominated ($Pe<1$) regime the BDF2-IMEX method is
stable under the standard hyperbolic CFL condition.
In this section we prove in addition to this, that BDF2-IMEX the method is stable
indepdendent of the Peclet number with a standard hyperbolic CFL
condition when $p=1$ and under the $4/3$-CFL when $p>1$. 

Let us define some norms. We start by defining the natural dissipation of the spatial variables.
\begin{equation*}
E(v)^2:=\gamma |v|_s^2+  \|\mu^{\frac12}
\nabla {v}\|^2.
\end{equation*}
We see that $E(v)^2=c_h(v, v)+a(v, v)$ when $v \in \mVh$ . An
immediate consequence of \eqref{eq:s_cont} and an inverse inequality
is that for all $v \in V_h$,
\begin{subequations}
\begin{alignat}{1}
\gamma |v|_s^2 \le & \frac{C \gamma (\|\beta\|_{\infty}+1)}{h} \|v\|^2,  \label{preEbound1} \\
   \|\mu^{\frac12} \nabla {v}\|^2 \le & \frac{C \mu}{h^2} \|v\|^2.  \label{preEbound2}
\end{alignat}
\end{subequations}
Hence, 
\begin{equation}\label{eq:Ebound}
\tau E(v)^2 \leq C \left(\gamma Co + \frac{Co}{Pe}\right) \|v\|^2,
\end{equation}
where we recall that the definition of the Peclet number $\Pe$:
\begin{equation*}
\Pe :=\frac{\|\beta\|_{\infty} h}{\mu}. 
\end{equation*}

For a linear operator $L$ defined for $\mVh$ we define
\begin{equation}\label{eq:Ldef}
\|L\|_h= \sup_{ v \in \mVh} \frac{L(v)}{\sqrt{E(v)^2+\|v\|^2}}. 
\end{equation}

 We introduce the triple norm, measuring the dissipation in the system,
\[
|||v|||^2 := \sum_{n=1}^{N-1} (\tau E(v^{n+1})^2 + \frac{1}{4}\|v^{n+1} - \tilde v^{n+1}\|^2).
\]
The following elementary relationship will be useful. 
\begin{equation}\label{aux104}
\tau (D_{\tau} v^{n+1},v^{n+1})_\Omega = \frac14 \big(
  \|v^{n+1}\|^2 + \|\tilde v^{n+2}\|^2 -(
  \|v^{n}\|^2 + \|\tilde v^{n+1}\|^2) + \|v^{n+1} - \tilde v^{n+1}\|^2 \big).
  \end{equation}

We will also make use of the following summation by parts formulas.
\begin{lemma}\label{lem:sum_by_parts}
Let $r(\cdot,\cdot)$ denote a bilinear form on $V_h \times V_h$. Then
the following summation by parts formulas holds
\begin{alignat}{1}
\sum_{n=1}^{N-1}  r(\delta v^{n+1}, w^{n+1})   =   r(v^{N}, w^{N}) - r(v^{1}, w^{2})  -
\sum_{n=2}^{N-1}  r(v^{n}, \delta w^{n+1}). \label{rbi1}
\end{alignat}
\end{lemma}
\begin{proof}
  We write
\begin{alignat*}{1}
\sum_{n=1}^{N-1}  r(\delta v^{n+1}, w^{n+1})  =& \sum_{n=1}^{N-1}  r(v^{n+1}, w^{n+1})- \sum_{n=1}^{N-1}  r(v^{n}, w^{n+1}) \\
= & r(v^N, w^N)+  \sum_{n=2}^{N-1}  r(v^{n},  w^{n})- \sum_{n=1}^{N-1}  r(v^{n}, w^{n+1}) \\
=& r(v^N, w^N)  - r(v^{1}, w^{2})  -\sum_{n=2}^{N-1}  r(v^{n}, \delta w^{n+1}).
\end{alignat*}

\end{proof}

\subsection{The case $\Pe \le 1$ for $p \ge 1$ and  the case $\Pe>1$ for $p=1$}
Before proving stability we prove an auxiliary result which will be helpful for the case $p=1$ and $\Pe >1$.  
\begin{lemma}
Let $p=1$ and let $u_h$ solve \eqref{eq:scheme} then the following estimate holds
\begin{alignat}{1}
\|\tau D_{\tau}u_h^{n+1}-P_0(\tau D_{\tau}u_h^{n+1})\| \le& C \sqrt{\tau} \sqrt{\Co} K E(u_h^{n+1}) +C \tau \|\nabla \beta\|_{\infty} \|\tilde{u}_h^{n+1}\| +C \Co \|\tilde{u}_h^{n+1}-u_h^{n+1}\| \nonumber\\
&+C\sqrt{\tau}  ( \frac{\sqrt{\Co}}{\sqrt{\Pe}}+ \sqrt{\Co}\sqrt{\gamma}+1) \|L^{n+1}\|_h, \label{DtauU}
\end{alignat}
where
\begin{equation*}
K(\gamma,  \Pe):= \Big(\frac{1}{\Pe}+\sqrt{\gamma}+\frac{1}{\sqrt{\gamma}}\Big).
\end{equation*}
\end{lemma}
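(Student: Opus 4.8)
The plan is to test \eqref{eq:scheme} with the $L^2$-projection of the fluctuation into $\mVh$ and to exploit the orthogonality built into $P_0$. Set $w:=\tau D_{\tau}u_h^{n+1}\in\mVh$ and $\phi:=w-P_0 w$, the quantity whose norm we must bound. Since $P_0 w\in W_h$ and, by definition of $P_0$, $\phi$ is $L^2(\Omega)$-orthogonal to $W_h$, we have $\|\phi\|^2=(\phi,w)_\Omega$. As $w\in\mVh$, the orthogonality \eqref{eq:L2proj} of $\pi_h$ gives $(\phi,w)_\Omega=(\pi_h\phi,w)_\Omega=\tau(D_{\tau}u_h^{n+1},\pi_h\phi)_\Omega$, so testing \eqref{eq:scheme} with $v_h=\pi_h\phi\in\mVh$ yields
\[
\|\phi\|^2=-\tau c(\tilde u_h^{n+1},\pi_h\phi)-\tau\gamma s(\tilde u_h^{n+1},\pi_h\phi)-\tau a(u_h^{n+1},\pi_h\phi)+\tau L^{n+1}(\pi_h\phi).
\]
It then suffices to bound each of the four terms by a constant times $\|\phi\|$ and divide by $\|\phi\|$; throughout I use $\|\pi_h\phi\|\le\|\phi\|$ and $\|\phi-\pi_h\phi\|\le\|\phi\|$.

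The delicate term, and the main obstacle, is the convection term $-\tau c(\tilde u_h^{n+1},\pi_h\phi)$: the direct estimate \eqref{eq:c_cont} would leave a factor $\Co$, far too large for the claimed $O(\tau)$ coefficient. The gain rests on the fact that for $p=1$ the field $\beta_0\cdot\nabla\tilde u_h^{n+1}$ is piecewise constant, hence $L^2$-orthogonal to $\phi$ because $\phi\perp W_h$. Writing $\beta=\beta_0+(\beta-\beta_0)$ and $\pi_h\phi=\phi-(\phi-\pi_h\phi)$ I obtain
\[
(\beta\cdot\nabla\tilde u_h^{n+1},\pi_h\phi)=-(\beta_0\cdot\nabla\tilde u_h^{n+1},\phi-\pi_h\phi)+((\beta-\beta_0)\cdot\nabla\tilde u_h^{n+1},\pi_h\phi).
\]
For the first term, since $\phi-\pi_h\phi\perp\mVh$ I may subtract the best $\mVh$-approximation of $\beta_0\cdot\nabla\tilde u_h^{n+1}$ and apply \eqref{interpinq2a} together with the jump estimates used to derive \eqref{interpinq2}, bounding it by $C(\|\nabla\beta\|_\infty\|\tilde u_h^{n+1}\|+(\|\beta\|_\infty/h)^{1/2}|\tilde u_h^{n+1}|_s)\|\phi\|$; for the second I use \eqref{beta0} and an inverse inequality to cancel $h$, giving $C\|\nabla\beta\|_\infty\|\tilde u_h^{n+1}\|\|\phi\|$. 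After the factor $\tau$, the $\|\nabla\beta\|_\infty$ parts give the second term of \eqref{DtauU}; the seminorm part, using $\tau^{1/2}(\|\beta\|_\infty/h)^{1/2}\le\Co^{1/2}$ and the splitting $\tilde u_h^{n+1}=u_h^{n+1}+(\tilde u_h^{n+1}-u_h^{n+1})$, contributes (via $\gamma^{1/2}|u_h^{n+1}|_s\le E(u_h^{n+1})$ and \eqref{eq:s_cont}) the weight $\gamma^{-1/2}\le K$ in the $E(u_h^{n+1})$ term and the term $C\Co\|\tilde u_h^{n+1}-u_h^{n+1}\|\|\phi\|$.

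The remaining terms are routine. The stabilisation term is handled by the Cauchy--Schwarz inequality for $s$ and \eqref{eq:s_cont}; the same splitting of $\tilde u_h^{n+1}$ gives the weight $\gamma^{1/2}\le K$ in the $E(u_h^{n+1})$ term and a further $\Co\|\tilde u_h^{n+1}-u_h^{n+1}\|$ contribution (the extra factor $\gamma$ being absorbed into $C$, as $\gamma=O(1)$). For the diffusion term I write $\tau a(u_h^{n+1},\pi_h\phi)\le\tau^{1/2}E(u_h^{n+1})\,\tau^{1/2}\|\mu^{1/2}\nabla\pi_h\phi\|$ and apply \eqref{eq:Ebound} to $\pi_h\phi$, obtaining $C\tau^{1/2}\Co^{1/2}(\gamma^{1/2}+\Pe^{-1/2})E(u_h^{n+1})\|\phi\|$; since $\gamma^{1/2},\Pe^{-1/2}\le K$ this lands in the first term of \eqref{DtauU}. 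Finally \eqref{eq:Ldef} gives $\tau L^{n+1}(\pi_h\phi)\le\tau\|L^{n+1}\|_h\big(E(\pi_h\phi)+\|\pi_h\phi\|\big)$, where \eqref{eq:Ebound} turns $\tau E(\pi_h\phi)$ into the $\Co^{1/2}(\gamma^{1/2}+\Pe^{-1/2})$ weight and the zeroth-order part gives the $+1$ weight using $\tau\le1$, which is exactly the last term of \eqref{DtauU}. Collecting the four bounds and dividing by $\|\phi\|$ establishes \eqref{DtauU}; the only genuine difficulty is the orthogonality argument for the convection term described above.
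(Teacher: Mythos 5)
Your proposal is correct and follows essentially the same route as the paper's proof: test the scheme with $\pi_h(y_h-P_0 y_h)$, exploit that $\beta_0\cdot\nabla\tilde u_h^{n+1}\in W_h$ for $p=1$ together with the $P_0$- and $\pi_h$-orthogonalities, invoke \eqref{interpinq} and \eqref{beta0} for the convection term, and finish with \eqref{eq:s_cont} and the splitting $\tilde u_h^{n+1}=u_h^{n+1}+(\tilde u_h^{n+1}-u_h^{n+1})$. The only differences are cosmetic (e.g.\ bounding the diffusion and $L^{n+1}$ terms via \eqref{eq:Ebound} rather than the individual inverse estimates), and they land in the same terms of \eqref{DtauU}.
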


\begin{proof}
Let $y_h=\tau D_{\tau}u_h^{n+1}$ and then we have by \eqref{eq:scheme}
\begin{alignat*}{1}
\|y_h-P_0 y_h\|^2=&( y_h, y_h-P_0(y_h))_{\Omega}\\
=&(y_h, \pi_h( y_h-P_0(y_h)))_{\Omega}\\
=& -\tau c_h(\tilde u_h^{n+1}, \pi_h( y_h-P_0(y_h)))-\tau a( u_h^{n+1}, \pi_h( y_h-P_0(y_h))) \\
&+\tau L^{n+1}(\pi_h( y_h-P_0(y_h))).
\end{alignat*}
We use the Cauchy-Schwarz inequality followed inverse estimates to bound the symmetric terms
\begin{alignat*}{1}
- \tau  a( u_h^{n+1}, \pi_h( y_h-P_0(y_h)))\le& C \frac{\sqrt{\mu}\tau}{h} \|\sqrt{\mu}  \nabla u_h^{n+1}\| \, \| y_h-P_0(y_h)\| \\
 \le& C \frac{\sqrt{\tau \Co}}{\sqrt{\Pe}} \|\sqrt{\mu}  \nabla u_h^{n+1}\| \, \| y_h-P_0(y_h)\|.
\end{alignat*}
For the stabilization we apply \eqref{eq:s_cont}
\begin{alignat}{1}
-\tau   \gamma s(\tilde u_h^{n+1},\pi_h( y_h-P_0(y_h))) 
\le&   C\sqrt{\tau} \sqrt{\gamma \Co}         \sqrt{\gamma} | \tilde
u_h^{n+1}|_s \| y_h-P_0(y_h)\|. \label{eq:stab_cont}
\end{alignat}

Next we bound $\tau L^{n+1}(\pi_h( y_h-P_0(y_h)))$ using \eqref{eq:Ldef} and \eqref{eq:Ebound}.
\begin{alignat*}{1}
\tau L^{n+1}(\pi_h( y_h-P_0(y_h))) \le &{\tau}\|L^{n+1}\|_h \sqrt{ E( \pi_h( y_h-P_0(y_h)))^2+ \|\pi_h( y_h-P_0(y_h))\|^2} \\
 \le &C\sqrt{\tau} ( \frac{\sqrt{\Co}}{\sqrt{\Pe}}+ \sqrt{\gamma \Co} +\sqrt{\tau}) \|L^{n+1}\|_h   \|y_h-P_0(y_h)\|.
\end{alignat*}

To bound the first term we observe that by \eqref{eq:stab_cont} it
only remains to bound the contribution from the form $c$.
\begin{alignat*}{1}
-\tau c(\tilde u_h^{n+1}, \pi_h( y_h-P_0(y_h))) =&-\tau ((\beta-\beta_0) \cdot \nabla \tilde u_h^{n+1}, \pi_h( y_h-P_0(y_h)))_{\Omega} \\
&-\tau (\beta_0 \cdot \nabla \tilde u_h^{n+1}-w_h^{n+1}, (I-\pi_h)( y_h-P_0(y_h)))_{\Omega}.
\end{alignat*}
Here $w_h \in \mVh$ is arbitrary. Note that we crucially used that $p=1$ which implies that $\beta_0 \cdot \nabla \tilde u_h^{n+1} \in W_h$.

Hence, using \eqref{beta0} and \eqref{interpinq} we obtain
\begin{alignat*}{1}
&-\tau c(\tilde u_h^{n+1}, \pi_h( y_h-P_0(y_h)))\\
 \le & C \tau (\|\nabla \beta\|_{\infty}\|\tilde u_h^{n+1}\|+ \|\beta\|_{\infty}^{1/2} h^{-1/2} | \tilde u_h^{n+1}|_s)  \|y_h-P_0(y_h)\| \\
\le & C \sqrt{\tau} (\sqrt{\tau} \|\nabla \beta\|_{\infty} \|\tilde u_h^{n+1}\|+ \sqrt{\Co} \frac{1}{\sqrt{\gamma}}  \sqrt{\gamma}| \tilde u_h^{n+1}|_s)  \|y_h-P_0(y_h)\|.
\end{alignat*}

Finally, by the triangle inequality and \eqref{eq:s_cont} we have the bound $| \tilde u_h^{n+1}|_s \le | u_h^{n+1}|_s+ \frac{C \sqrt{\Co}}{\sqrt{\tau}} \|u_h^{n+1}-\tilde u_h^{n+1}\|$.  Combining the above inequalities gives the result.
\end{proof}

We will need the following discrete simple form of the discrete Gronwall's inequality. 
\begin{proposition}
Let $\{\phi_n\}$ be a sequence of non-negative numbers and let $\psi$ and $\eta$ be non-negative numbers such that
\begin{equation*}
\phi_n \le \psi+ \eta \sum_{i=1}^n \phi_i
\end{equation*}
Then, the following estimate holds
\begin{equation}\label{gronwall}
\phi_N \le (1+ N \eta e^{\eta N}) \psi. 
\end{equation}

\end{proposition}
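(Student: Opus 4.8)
The plan is to reduce the inequality to a first-order linear recursion for the partial sums, solve that recursion in closed form, and then convert the resulting geometric growth into the stated exponential bound. To this end I would introduce the partial sums $S_n := \sum_{i=1}^n \phi_i$ with $S_0 := 0$, so that the hypothesis reads $\phi_n \le \psi + \eta S_n$ and $S_n = S_{n-1} + \phi_n$.

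First I would establish a one-step recursion for $S_n$. Combining $S_n = S_{n-1} + \phi_n$ with the bound on $\phi_n$, and absorbing the diagonal contribution of $\phi_n$ that appears on both sides, gives a recursion of the form $S_n \le (1+\eta)S_{n-1} + \psi$. Iterating this from $S_0 = 0$, or equivalently arguing by induction on $n$, yields the closed form
\[
S_n \le \psi \sum_{k=0}^{n-1}(1+\eta)^k = \psi\,\frac{(1+\eta)^n - 1}{\eta}.
\]
Feeding this back into the hypothesis then produces the clean intermediate estimate $\phi_n \le \psi + \eta S_{n-1} \le \psi(1+\eta)^{n-1}$, where the geometric sum cancels the $-1$ and $+\psi$ contributions exactly.

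It remains to convert $\psi(1+\eta)^{N-1}$ into the advertised form $(1 + N\eta\,e^{\eta N})\psi$, and this elementary comparison is the only step that requires any care. Expanding by the binomial theorem, I would bound
\[
(1+\eta)^{N-1} - 1 = \sum_{k=1}^{N-1}\binom{N-1}{k}\eta^k \le \sum_{k=1}^{N-1}\frac{(N\eta)^k}{k!},
\]
using $\binom{N-1}{k}\eta^k \le (N-1)^k\eta^k/k! \le (N\eta)^k/k!$. Factoring one power of $N\eta$ out of the tail and using $1/k! \le 1/(k-1)!$ gives $\sum_{k\ge 1}(N\eta)^k/k! \le N\eta\sum_{j\ge 0}(N\eta)^j/j! = N\eta\,e^{N\eta}$, whence $(1+\eta)^{N-1} \le 1 + N\eta\,e^{\eta N}$ and therefore $\phi_N \le (1 + N\eta\,e^{\eta N})\psi$, which is \eqref{gronwall}.

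The argument is entirely elementary; the main obstacle, such as it is, is bookkeeping — arranging the recursion so that its multiplier is exactly $1+\eta$ (rather than the $1/(1-\eta)$ that a naive rearrangement would produce), and then matching the precise prefactor $1 + N\eta\,e^{\eta N}$. For the latter, factoring $N\eta$ out of the binomial tail \emph{before} comparing with the exponential series is the decisive trick.
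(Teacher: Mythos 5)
The paper states this proposition without proof, so your argument must stand on its own, and it has a genuine gap at the very first reduction. From $S_n = S_{n-1}+\phi_n$ and the hypothesis $\phi_n \le \psi + \eta S_n$ you can only conclude $(1-\eta)S_n \le S_{n-1}+\psi$, because the sum on the right of the hypothesis runs up to $i=n$ and therefore contains $\phi_n$ itself; absorbing that diagonal contribution necessarily produces the multiplier $1/(1-\eta)$, not $1+\eta$, and requires $\eta<1$ to say anything at all. No rearrangement yields $S_n \le (1+\eta)S_{n-1}+\psi$: taking $\eta=\tfrac12$, $\psi=1$ and the sequence defined by equality in the hypothesis gives $\phi_n=2^n$, $S_n=2^{n+1}-2$, which satisfies every assumption of the proposition but violates both your claimed recursion and your intermediate estimate $\phi_n \le \psi(1+\eta)^{n-1}=(3/2)^{n-1}$. (The same example shows the proposition as printed is not literally true for $\eta$ that is not small: the sharp consequence of the hypothesis is $\phi_N \le \psi(1-\eta)^{-N}$, which eventually exceeds $(1+N\eta e^{\eta N})\psi$; the paper only ever invokes the result with $\eta=O(\tau)$ and $N\eta=O(T)$, where it is salvageable.)

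The repair is either to read the hypothesis with the sum running to $n-1$ — which is what your derivation implicitly assumes, and is consistent with how the inequality is used in the stability proofs, where the top term $\|u_h^N\|^2$ is absorbed into the left-hand side separately — or to keep the stated hypothesis, impose explicitly something like $\eta\le\tfrac12$, and carry the factor $(1-\eta)^{-1}$ through the iteration to get $\phi_N \le \psi(1-\eta)^{-N} \le \psi e^{2\eta N}$, adjusting the final constant accordingly. Your closing comparison $(1+\eta)^{N-1}\le 1+N\eta e^{\eta N}$ via the binomial expansion is correct as far as it goes (and can be shortened to $(1+\eta)^{N-1}\le e^{\eta N}\le 1+\eta N e^{\eta N}$, the last inequality being $e^{-x}\ge 1-x$), but it is applied to an intermediate bound that does not follow from the hypothesis as stated.
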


We will use the following notation: We let $c_L=0$ if $L^{n}\equiv 0$ for every $n$ and $c_L=1$ otherwise.

\begin{theorem}\label{thm:stab}
Suppose that  $T=N \tau$. Suppose that $\Co$ is chosen sufficiently small only depending on geometric constants of the mesh and $\gamma$.  For $\{u_h^n\}$ solving \eqref{eq:scheme} we have the following bounds:

If $\Pe \le 1$ then for all $p \ge 1$,
\begin{alignat}{1}
\|u_h^{N}\|^2   \le &   (1+\frac{Tc_L}{8} e^{\frac{Tc_L}{8}}) (\|u_h^{0}\|^2 +\|u_h^{1}\|^2 +32 \tau \sum_{n=1}^{N-1} \|L^{n+1}\|_h^2 )  . \label{thmresult1}
\end{alignat}

If $\Pe > 1$  and $p=1$, $\gamma>0$ then 
\begin{alignat}{1}
\|u_h^{N}\|^2  \le &    (\|u_h^{0}\|^2 +\|u_h^{1}\|^2+ \|u_h^{2}\|^2+ 9 \tau \sum_{n=1}^{N-1} \|L^{n+1}\|_h^2) M,  \label{thmresult2}
\end{alignat}
where 
\begin{equation*}
M=8\Big(1+c T (\tau \|\nabla \beta\|_{\infty}^2+c_L) e^{c T (\tau \|\nabla \beta\|_{\infty}^2+c_L)}\Big). 
\end{equation*}
\end{theorem}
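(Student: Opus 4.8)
The plan is to run a discrete energy argument: test \eqref{eq:scheme} with $v_h = u_h^{n+1}$, multiply by $\tau$, and sum over $n = 1, \dots, N-1$. The time-derivative term is handled by the $G$-stability identity \eqref{aux104}, which telescopes and leaves the nonnegative increment dissipation $\tfrac14\sum_n \|u_h^{n+1} - \tilde u_h^{n+1}\|^2$; the diffusion term $\tau a(u_h^{n+1},u_h^{n+1}) = \tau\|\mu^{1/2}\nabla u_h^{n+1}\|^2$ and, after writing $c_h(\tilde u_h^{n+1}, u_h^{n+1}) = c_h(u_h^{n+1},u_h^{n+1}) + c(\tilde u_h^{n+1} - u_h^{n+1}, u_h^{n+1}) + \gamma s(\tilde u_h^{n+1} - u_h^{n+1}, u_h^{n+1})$, the positive part $c_h(u_h^{n+1},u_h^{n+1}) = \gamma|u_h^{n+1}|_s^2$ (the convective self-term vanishing by skew-symmetry, cf. \eqref{cgamma}) supplies the remaining dissipation, so that the left-hand side controls $\tfrac14\|u_h^N\|^2$ together with the full triple norm $|||u_h|||^2$. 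Everything then reduces to bounding, on the right-hand side, the two consistency defects $\tau c(\tilde u_h^{n+1} - u_h^{n+1}, u_h^{n+1})$ and $\tau\gamma s(\tilde u_h^{n+1} - u_h^{n+1}, u_h^{n+1})$ coming from the explicit extrapolation, plus the data term $\tau L^{n+1}(u_h^{n+1})$.

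The two easy ingredients, common to both regimes, are the stabilisation defect and the data term. For the former I would use Cauchy--Schwarz in the $s$-seminorm followed by \eqref{eq:s_cont} to write $\tau\gamma s(\tilde u - u, u) \le \gamma(\tau^{1/2}|\tilde u - u|_s)(\tau^{1/2}|u|_s) \le C\gamma\, \Co^{1/2}\|\tilde u - u\|\,(\tau^{1/2}|u|_s)$ and absorb it into $\tfrac14\|\tilde u - u\|^2 + \tfrac14\tau\gamma|u|_s^2$ provided $\Co$ is small depending on $\gamma$. For the data term I would use the definition \eqref{eq:Ldef} of $\|\cdot\|_h$ together with \eqref{eq:Ebound}, splitting $\tau L^{n+1}(u^{n+1}) \le \epsilon\tau E(u^{n+1})^2 + C\tau\|u^{n+1}\|^2 + C\tau\|L^{n+1}\|_h^2$; the first term is absorbed, the second feeds the Gronwall argument (this is the only source of the $c_L$-dependence), and the third is the data contribution. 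This already yields \eqref{thmresult1} in the regime $\Pe \le 1$, because there the convective defect is controlled directly by the diffusion: bounding $\tau c(\tilde u - u, u) \le \tau\|\beta\|_\infty\|\nabla u\|\|\tilde u - u\|$ and applying Young against $\tau\|\mu^{1/2}\nabla u\|^2$ leaves a multiple of $\tau\|\beta\|_\infty^2\mu^{-1}\|\tilde u - u\|^2$, whose coefficient equals $(\tau\|\beta\|_\infty/h)\Pe \le \Co\,\Pe \le \Co$ and is thus absorbed by the increment dissipation. No $\|u\|^2$ residual survives, so the discrete Gronwall inequality \eqref{gronwall} is applied only to the forcing-induced term, producing the factor $1 + \tfrac{Tc_L}{8}e^{Tc_L/8}$.

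The genuinely hard case is $\Pe > 1$ with $p = 1$, where the diffusion is too weak to absorb the convective defect and one must exploit the stabilisation and the piecewise-constant structure of $\beta_0 \cdot \nabla u_h$. Here I would first replace $\beta$ by its lowest-order Raviart--Thomas projection $\beta_0$, paying the price $C\tau\|\nabla\beta\|_\infty\|u\|\|\tilde u - u\|$ via \eqref{beta0} and an inverse inequality; after Young this produces the increment term and a residual $C\tau^2\|\nabla\beta\|_\infty^2\|u\|^2$, which is exactly the $\tau\|\nabla\beta\|_\infty^2$ contribution visible in the exponent of $M$. For the remaining term $\tau\int \beta_0\cdot\nabla u\,(\tilde u - u)$, using $\tilde u - u = 2\delta u - 2\tau D_\tau u$ from \eqref{Dtau}--\eqref{tildedelta} I would split off a first-increment piece $\tau c(u^{n+1},\delta u^{n+1})$, to be summed by parts in time via Lemma \ref{lem:sum_by_parts} (the skew-symmetric boundary term at $n=N$ vanishing), and a piece involving $\tau D_\tau u^{n+1}$. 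In the latter, because $\beta_0\cdot\nabla u_h$ is piecewise constant for $p=1$, I would pass to the piecewise-constant projection and isolate the fluctuation $\tau D_\tau u^{n+1} - P_0(\tau D_\tau u^{n+1})$, which is precisely the quantity estimated by the auxiliary inequality \eqref{DtauU}; pairing it against the fluctuation of $\beta\cdot\nabla u$, controlled by $C\|\nabla\beta\|_\infty\|u\|$ since the $\beta_0$-part has no element-wise oscillation, lets me trade each factor for either the dissipation $\tau E(u^{n+1})^2$, the increment $\|\tilde u - u\|^2$, the data $\|L^{n+1}\|_h$, or a residual of size $C\tau^2(\Co + \|\nabla\beta\|_\infty^2)\|u\|^2$.

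The main obstacle, and the step requiring the most care, is this last bookkeeping: one must verify that every term surviving absorption into the triple norm carries a coefficient of the form $C\tau\,(\tau\|\nabla\beta\|_\infty^2 + c_L)\,\|u^{n+1}\|^2$, i.e. of order $\tau$ with the correct $\tau\|\nabla\beta\|_\infty^2$ weight, so that the discrete Gronwall inequality \eqref{gronwall} with $N\tau = T$ yields the factor $M$. This is delicate precisely because under the standard hyperbolic CFL the Courant number $\Co$ is only bounded (not $O(\tau)$), so a naive Cauchy--Schwarz bound $\Co\|u\|\|\tilde u - u\|$ of the convective defect would leave an inadmissible $\Co^2\|u\|^2$ residual; it is the fluctuation estimate \eqref{DtauU}, combined with the $p=1$ piecewise-constant cancellation and the summation by parts \eqref{rbi1}, that replaces the dangerous factor $\Co$ by the harmless $\tau\|\nabla\beta\|_\infty$ and makes the Gronwall argument close. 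Choosing $\Co$ small enough, depending only on mesh constants and $\gamma$, to absorb all the dissipation-type terms then completes the proof.
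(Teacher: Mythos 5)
Your skeleton (testing with $u_h^{n+1}$, the identity \eqref{aux104}, the splitting into stabilisation defect, convective defect and data term, the absorption of $S_2$ and $S_3$, and the entire case $\Pe\le 1$) coincides with the paper's proof and is sound. The gap is in the case $\Pe>1$, $p=1$, and it lies in the \emph{order} in which you deploy the algebraic identities. You expand $u_h^{n+1}-\tilde u_h^{n+1}=\delta\delta u_h^{n+1}=2\tau D_\tau u_h^{n+1}-2\delta u_h^{n+1}$ \emph{before} summing by parts, which leaves the partial sum $\tau\sum_n c(u_h^{n+1},\delta u_h^{n+1})$ on its own. This term cannot be closed by the means you describe: skew-symmetry gives $c(u_h^{n+1},\delta u_h^{n+1})=c(u_h^{n},\delta u_h^{n+1})$, so each summand is of size $\Co\,\|u_h^{n}\|\,\|\delta u_h^{n+1}\|$, and $\|\delta u_h^{n+1}\|$ — unlike $\|\delta\delta u_h^{n+1}\|=\|u_h^{n+1}-\tilde u_h^{n+1}\|$ — is not controlled by the increment dissipation in $|||u_h|||$. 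Under the hyperbolic CFL, $\Co$ is only bounded, not $O(\tau)$, so the sum is of order $\Co\, T\tau^{-1}\max_n\|u_h^n\|^2$ in the worst case and is not Gronwall-compatible. Applying Lemma \ref{lem:sum_by_parts} to this particular sum is circular (the identity above shows it simply reproduces itself), so the vanishing boundary term you invoke buys nothing. The companion piece is also misdirected: since $\beta_0\cdot\nabla u_h^{n+1}\in W_h$ when $p=1$, the pairing of $\beta_0\cdot\nabla u_h^{n+1}$ with the fluctuation $\tau D_\tau u_h^{n+1}-P_0(\tau D_\tau u_h^{n+1})$ is exactly zero; what remains to be controlled is the pairing with $P_0(\tau D_\tau u_h^{n+1})$, whose norm is not small, and your plan does not address it.

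The paper's argument keeps $\delta\delta u_h^{n+1}$ intact precisely so that both factors in the final pairing are small. The order is: first sum by parts on $\tau\sum_n c(\delta\delta u_h^{n+1},u_h^{n+1})$ via Lemma \ref{lem:sum_by_parts}, producing $-\tau\sum_{n} c(\delta u_h^{n},\delta u_h^{n+1})$ plus boundary terms handled by \eqref{eq:c_cont}; then use the identity $-c(\delta u_h^{n},\delta u_h^{n+1})=c(\delta\delta u_h^{n+1},\delta u_h^{n+1})=c(\delta\delta u_h^{n+1},\tau D_\tau u_h^{n+1})$; only now apply skew-symmetry together with the element-wise vanishing of $\nabla P_0(\tau D_\tau u_h^{n+1})$ and an inverse inequality, so that the surviving factor is $\Co\,\|\delta\delta u_h^{n+1}\|\cdot\|\tau D_\tau u_h^{n+1}-P_0(\tau D_\tau u_h^{n+1})\|$ — a product of a dissipation-controlled quantity and the quantity estimated by \eqref{DtauU}. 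Your ingredients (the fluctuation lemma, the $p=1$ structure, summation by parts) are the right ones; rearranging them into this order is what makes the bookkeeping close with residuals of the admissible form $C\tau(\tau\|\nabla\beta\|_\infty^2+c_L)\|u_h^{n+1}\|^2$.
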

\begin{proof}
We test equation \eqref{eq:scheme} with $u_h^{n+1}$ and get
\begin{equation*}
(D_{\tau} u_h^{n+1},u_h^{n+1})_\Omega +c_h(\tilde u_h^{n+1}, u_h^{n+1}) +a(u_h^{n+1}, u_h^{n+1})
= L^{n+1}(u_h^{n+1}).
\end{equation*}
Thus, we see that 
\begin{equation*}
(D_{\tau} u_h^{n+1},u_h^{n+1})_\Omega +c_h(u_h^{n+1}, u_h^{n+1})+a(u_h^{n+1}, u_h^{n+1})
=c_h(u_h^{n+1}-\tilde u_h^{n+1},  u_h^{n+1})+  L^{n+1}(u_h^{n+1}). 
\end{equation*}

Then by summing over $n=1,\hdots, N-1$, multiplying with $\tau$, and using \eqref{aux104}, \eqref{cgamma}
\begin{alignat}{1}
&\frac14 (
  \|u_h^{N}\|^2 + \|\tilde u_h^{N+1}\|^2) -\frac14(
  \|u_h^{1}\|^2 + \|\tilde u_h^{2}\|^2) + |||u_h|||^2 = S_1+S_2+S_3 \label{aux436}
\end{alignat}
where
\begin{alignat*}{1} 
S_1:=& \tau \sum_{n=1}^{N-1} c(u_h^{n+1}-\tilde u_h^{n+1},  u_h^{n+1}), \\
S_2:=&\tau \sum_{n=1}^{N-1} \gamma s(u_h^{n+1}-\tilde u_h^{n+1},  u_h^{n+1}),\\
S_3:=& \tau \sum_{n=1}^{N-1} L^{n+1}(u_h^{n+1}).\\
\end{alignat*}
Let us estimate $S_3$. We have 
\begin{alignat*}{1}
S_3 \le & \tau \sum_{n=1}^{N-1} \|L^{n+1}\|_h \sqrt{E(u_h^{n+1})^2+\|u_h^{n+1}\|^2} \\
\le &  8 \tau \sum_{n=1}^{N-1} \|L^{n+1}\|_h^2 + \frac{1}{32} \tau  \sum_{n=1}^{N-1} E(u_h^{n+1})^2+ \frac{c_L}{32} \tau  \sum_{n=1}^{N-1} \|u_h^{n+1}\|^2. 
\end{alignat*}

Now we estimate $S_2$.  Using the arithmetic-geometric mean inequality and inverse estimates we obtain
\begin{equation*}
S_2 \le  \sqrt{\Co}   \gamma \sum_{n=1}^{N-1}   \| u_h^{n+1}-\tilde
u_h^{n+1}\|^2 +   C \sqrt{\Co} \tau  \sum_{n=1}^{N-1}  \gamma  | u_h^{n+1}|_s^2.
\end{equation*}

Next we bound $S_1$. We consider two cases: $\Pe >1$ and $\Pe  \le 1$.

{{\bf Case 1}: $\Pe \le 1$, $p \ge 1$:} Using that $c(u_h^{n+1}-\tilde u_h^{n+1},  u_h^{n+1})=-c(u_h^{n+1}, u_h^{n+1}-\tilde u_h^{n+1})$ we obtain 
\begin{alignat*}{1}
S_1   \le &    C \sqrt{\Co} \sqrt{\Pe} \sqrt{\tau} \sum_{n=1}^{N-1} \| u_h^{n+1}-\tilde u_h^{n+1}\|  \, \|\sqrt{\mu} \nabla u_h^{n+1}\| \\
\le & \sqrt{\Co}   \sum_{n=1}^{N-1} \| u_h^{n+1}-\tilde u_h^{n+1}\|^2+   C \sqrt{\Co}  \tau \sum_{n=1}^{N-1} \|\sqrt{\mu} \nabla u_h^{n+1}\|^2.  
\end{alignat*}
Thus, using \eqref{aux436} and the fact that $\Co$  is sufficiently small we obtain
\begin{alignat*}{1}
&\frac14 (
  \|u_h^{N}\|^2 + \|\tilde u_h^{N+1}\|^2)  + \frac{1}{2} |||u_h|||^2  \\
&  \le \frac14(\|u_h^{1}\|^2 + \|\tilde u_h^{2}\|^2)+ 8 \tau \sum_{n=1}^{N-1} \|L^{n+1}\|_h^2   +\frac{c_L}{32} \tau  \sum_{n=1}^{N-1} \|u_h^{n+1}\|^2. 
\end{alignat*}
Using Gronwall's inequality \eqref{gronwall} we have \eqref{thmresult1}.

{{\bf Case 2}: $\Pe > 1$ and $p=1$:}
We use \eqref{tildedelta} and \eqref{rbi1} to obtain
\begin{alignat*}{1}
 S_1=& \tau \sum_{n=1}^{N-1} c(\delta \delta u_h^{n+1},  u_h^{n+1}) =
 - \tau \sum_{n=2}^{N-1} c(\delta u_h^{n},  \delta u_h^{n+1})+\tau
 \big(c(\delta u_h^N, u_h^N)-c(\delta u_h^1, u_h^2)\big). 
 \end{alignat*}
 Using that $c(\delta \delta u_h^{n+1},\delta \delta u_h^{n+1} )=0=c(\delta u_h^{n+1}, \delta u_h^{n+1} ) $ and \eqref{Dtau} we have
 \begin{equation*}
- c(\delta u_h^{n},  \delta u_h^{n+1})=c(\delta \delta u_h^{n+1},   \delta u_h^{n+1}) =c(\delta \delta u_h^{n+1}, \tau D_{\tau} u_h^{n+1}).
 \end{equation*}
 Thus, 
\begin{alignat}{1}
 S_1=&  \tau \sum_{n=2}^{N-1} c(\delta \delta u_h^{n+1}, \tau D_{\tau}
 u_h^{n+1})+\tau \big(c(\delta u_h^N, u_h^N)-c(\delta u_h^1,
 u_h^2)\big). \label{eq:crucial_S1}
 \end{alignat} 
We let $y_h^{n+1}= \tau D_{\tau} u_h^{n+1}$ and use the fact that
$P_0(y_h^{n+1})$ is in the kernel of the gradient operator followed by
an inequality similar to 
\eqref{eq:c_cont}, but applied elementwise, to see that 
\begin{alignat*}{1}
\tau c(\delta \delta u_h^{n+1},  y_h^{n+1})=-\tau c( y_h^{n+1},  \delta \delta u_h^{n+1})
\le & C Co \|\delta \delta u_h^{n+1}\|  \|y_h^{n+1}-P_0(y_h^{n+1})\|.
\end{alignat*}
Thus, applying  \eqref{DtauU} we obtain 
\begin{alignat*}{1}
 &\tau \sum_{n=2}^{N-1} c(\delta \delta u_h^{n+1},  \tau D_{\tau} u_h^{n+1})  \\
 \le & C (\Co)^{3/2}K\sqrt{\tau} \sum_{n=2}^{N-1}  E(u_h^{n+1})  \|\delta \delta u_h^{n+1}\| + C \Co \|\nabla \beta\|_{\infty}  \tau \sum_{n=2}^{N-1}   \|\tilde u_h^{n+1}\|\, \|\delta \delta u_h^{n+1}\| \\ 
 &+   C (\Co)^{2} \sum_{n=2}^{N-1} \|\delta \delta u_h^{n+1}\|^2 +C \sqrt{\tau }  \Co ( \frac{\sqrt{\Co}}{\sqrt{\Pe}}+ \sqrt{\Co \gamma}+\sqrt{\tau})  \sum_{n=2}^{N-1} \|L^{n+1}\|_h \|\delta \delta u_h^{n+1}\|.
\end{alignat*}
To bound the remaining two terms we use \eqref{eq:c_cont} followed by
Young's inequality:
 \begin{alignat}{1}
 &\tau \big(c(\delta u_h^N, u_h^N)-c(\delta u_h^1, u_h^2)\big)
 \le C \Co (\|u_h^N\|^2+\|\tilde u_h^{N+1}\|^2)+  C \Co (\|u_h^0\|^2+\|u_h^{1}\|^2+\|u_h^2\|^2). \label{S1part2}
 \end{alignat}
 Here we also used that $\delta u_h^N=\tilde{u}_h^{N+1}-u_h^N$. Hence, we arrive at 
\begin{alignat}{1}\nonumber
S_1 \le&   C \Co (\|u_h^N\|^2+\|\tilde{u}_h^{N+1}\|^2)+  C \Co (\|u_h^0\|^2+\|u_h^{1}\|^2+\|u_h^2\|^2) \\\nonumber
&+C  \Co K^2  \sum_{n=2}^{N-1} \tau E (u_h^{n+1})^2 + C (\Co)^{2} \sum_{n=2}^{N-1} \|\delta \delta u_h^{n+1}\|^2\\
  &+ C  \|\nabla \beta\|_{\infty}^2  \tau^2 \sum_{n=2}^{N-1}     \|
  \tilde{u}_h^{n+1}\|^2 +\tau     \sum_{n=2}^{N-1} \|L^{n+1}\|_h^2. \label{eq:S1bound}
\end{alignat}
Here we used that $( \frac{\sqrt{\Co}}{\sqrt{\Pe}}+ \sqrt{\Co \gamma}+\sqrt{\tau}) $ is bounded in this case. 
Finally, using \eqref{aux436} and the fact that $\Co$ is sufficiently small  we obtain
\begin{alignat}{1}
&\frac18 (
  \|u_h^{N}\|^2 + \|\tilde u_h^{N+1}\|^2)  + \frac{1}{2}|||u_h|||^2  \nonumber  \\
\le &   (\|u_h^{0}\|^2 +\|u_h^{1}\|^2 + \|u_h^{2}\|^2) + C   \tau(\tau \|\nabla \beta\|_{\infty}^2+c_L) \sum_{n=1}^{N-1}   \|u_h^{n+1}\|^2+9\tau  \sum_{n=1}^{N-1} \|L^{n+1}\|_h^2 \label{aux991}
\end{alignat}
We can now use the discrete Gronwall inequality \eqref{gronwall} to get \eqref{thmresult2}.
\end{proof}

\subsection{The case $ \Pe >1 $ for $p \ge 1$ with  4/3-CFL
condition}
We will now prove a stability result in the high Peclet regime $Pe>1$
that holds for any polynomial order under more stringent 4/3-CFL
condition. In fact, we will not need the stabilization term $s(\cdot,
\cdot)$ to guarantee this. The result holds for the standard Galerkin
method as well.
\begin{theorem}\label{thm:stab2}
Suppose that $T=N \tau$, $\Pe > 1$  and that $\max\{\Co, \Co_{4/3}\}$ is sufficiently small   only depending on geometric constants of the mesh and $\gamma$.  Let $p\ge 1$.  For $\{u_h^n\}$ solving \eqref{eq:scheme} we have the following bound:
\begin{alignat}{1}
\|u_h^{N}\|^2  \le &    (\|u_h^{0}\|^2 +\|u_h^{1}\|^2+ \|u_h^{2}\|^2+ 9\tau \sum_{n=1}^{N-1} \|L^{n+1}\|_h^2) M,  \label{thmresult4}
\end{alignat}
where 
\begin{equation*}
M= 8(1+(\frac{c_L}{4}+8) T e^{(\frac{c_L}{4}+8) T}).
\end{equation*}
\end{theorem}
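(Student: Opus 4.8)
The plan is to mirror the proof of Theorem \ref{thm:stab}, Case 2, but to replace the piecewise-constant projection argument (which forced $p=1$) by the operator $C_h$ together with the sharper bound \eqref{eq:43cont}. First I test \eqref{eq:scheme} with $u_h^{n+1}$, move the convection onto the diagonal by writing $c_h(\tilde u_h^{n+1},u_h^{n+1})=c_h(u_h^{n+1},u_h^{n+1})+c_h(u_h^{n+1}-\tilde u_h^{n+1},u_h^{n+1})$, sum over $n=1,\dots,N-1$, multiply by $\tau$, and invoke \eqref{aux104} and \eqref{cgamma} to arrive at \eqref{aux436} with the three terms $S_1,S_2,S_3$. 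The terms $S_2$ and $S_3$ are estimated exactly as in Theorem \ref{thm:stab}: $S_3$ via \eqref{eq:Ldef} and \eqref{eq:Ebound} produces $8\tau\sum\|L^{n+1}\|_h^2$, a small multiple of $\tau\sum E(u_h^{n+1})^2$, and the Gronwall seed $\tfrac{c_L}{32}\tau\sum\|u_h^{n+1}\|^2$, while $S_2$ is absorbed into the dissipation using the smallness of $\Co$ through \eqref{eq:s_cont}. Note that nothing here uses $\gamma>0$, which is why the standard Galerkin case $\gamma=0$ is included.

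The whole novelty lies in $S_1$. Using \eqref{tildedelta} I write $u_h^{n+1}-\tilde u_h^{n+1}=\delta\delta u_h^{n+1}$ and apply the summation by parts \eqref{rbi1} together with the skew-symmetry identities $c(\delta\delta u_h^{n+1},\delta\delta u_h^{n+1})=c(\delta u_h^{n+1},\delta u_h^{n+1})=0$ and \eqref{Dtau}, exactly as in the derivation of \eqref{eq:crucial_S1}, to get
\[
S_1=\tau\sum_{n=2}^{N-1}c(\delta\delta u_h^{n+1},\tau D_{\tau}u_h^{n+1})+\tau\bigl(c(\delta u_h^N,u_h^N)-c(\delta u_h^1,u_h^2)\bigr).
\]
The two boundary terms are handled verbatim by \eqref{eq:c_cont} and Young's inequality, yielding \eqref{S1part2}, whose first part is absorbed into the left-hand side (degrading the factor $\tfrac14$ to $\tfrac18$) and whose second part enters the initial data.

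For the remaining sum I do not project; instead, for each $n$ I exploit the skew-symmetry of $c$ through $C_h$ and the $4/3$-bound \eqref{eq:43cont},
\[
\tau\,c(\delta\delta u_h^{n+1},\tau D_{\tau}u_h^{n+1})=-\tau\bigl(\delta\delta u_h^{n+1},\,C_h(\tau D_{\tau}u_h^{n+1})\bigr)_{\Omega}\le C_i\,\tau^{\frac14}\Co_{4/3}^{\frac34}\,\|\delta\delta u_h^{n+1}\|\,\|\tau D_{\tau}u_h^{n+1}\|,
\]
and then I bound $\|\tau D_{\tau}u_h^{n+1}\|$ by testing \eqref{eq:scheme} with $z:=\tau D_{\tau}u_h^{n+1}\in\mVh$, giving $\|z\|^2=\tau L^{n+1}(z)-\tau c_h(\tilde u_h^{n+1},z)-\tau a(u_h^{n+1},z)$. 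Here the convection is controlled by $C_h$ and \eqref{eq:43cont} through $\tau c(\tilde u_h^{n+1},z)\le C_i\tau^{1/4}\Co_{4/3}^{3/4}\|\tilde u_h^{n+1}\|\,\|z\|$, the stabilization by \eqref{eq:s_cont}, the diffusion by an inverse inequality and $\tfrac{\tau\sqrt{\mu}}{h}\le C\sqrt{\tau\Co/\Pe}$, and the forcing by \eqref{eq:Ldef}--\eqref{eq:Ebound}; dividing by $\|z\|$ leaves the decisive term $C_i\tau^{1/4}\Co_{4/3}^{3/4}\|\tilde u_h^{n+1}\|$ plus dissipation-scale pieces $\sqrt{\tau}\,|\tilde u_h^{n+1}|_s$ and $\sqrt{\tau}\,\|\mu^{1/2}\nabla u_h^{n+1}\|$ carrying small $\sqrt{\Co}$ weights (with $|\tilde u_h^{n+1}|_s\le |u_h^{n+1}|_s+C\sqrt{\Co/\tau}\,\|\delta\delta u_h^{n+1}\|$) and the forcing $\sqrt{\tau}\,\|L^{n+1}\|_h$. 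Inserting this, the crucial product is $\tau^{1/4}\Co_{4/3}^{3/4}\cdot\tau^{1/4}\Co_{4/3}^{3/4}\|\delta\delta u_h^{n+1}\|\,\|\tilde u_h^{n+1}\|=\tau^{1/2}\Co_{4/3}^{3/2}\|\delta\delta u_h^{n+1}\|\,\|\tilde u_h^{n+1}\|$, which Young splits into $C\Co_{4/3}^{3/2}\|\delta\delta u_h^{n+1}\|^2$ (absorbed into the $\tfrac14\|u_h^{n+1}-\tilde u_h^{n+1}\|^2$ part of $\tbar{u_h}^2$) and the clean Gronwall term $C\Co_{4/3}^{3/2}\tau\|\tilde u_h^{n+1}\|^2$; the dissipation cross terms absorb into $\tau\sum E(u_h^{n+1})^2$ and the forcing cross term into $\tau\sum\|L^{n+1}\|_h^2$. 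Since $\beta_0$ never appears, no $\|\nabla\beta\|_\infty$ contribution survives. Collecting everything with $\max\{\Co,\Co_{4/3}\}$ small and reindexing $\|\tilde u_h^{n+1}\|^2\le C(\|u_h^n\|^2+\|u_h^{n-1}\|^2)$ yields the analogue of \eqref{aux991},
\[
\tfrac18\bigl(\|u_h^{N}\|^2+\|\tilde u_h^{N+1}\|^2\bigr)+\tfrac12\tbar{u_h}^2\le \|u_h^{0}\|^2+\|u_h^{1}\|^2+\|u_h^{2}\|^2+\Bigl(\tfrac{c_L}{4}+8\Bigr)\tau\sum_{n=1}^{N-1}\|u_h^{n+1}\|^2+9\tau\sum_{n=1}^{N-1}\|L^{n+1}\|_h^2,
\]
and the discrete Gronwall inequality \eqref{gronwall} with $\eta=\tau(\tfrac{c_L}{4}+8)$ and $N\tau=T$ gives \eqref{thmresult4} with the stated $M$.

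The main obstacle is the bound on $\|\tau D_{\tau}u_h^{n+1}\|$ for arbitrary $p$: for $p=1$ one could replace $\tau D_{\tau}u_h^{n+1}$ by its non-constant part and control it through \eqref{DtauU}, which is unavailable for $p>1$. The $4/3$-CFL compensates precisely because $C_h$ obeys \eqref{eq:43cont} rather than only \eqref{ChCo}: the factor $\tau^{1/4}\Co_{4/3}^{3/4}$ enters twice—once in the summation-by-parts pairing and once in the norm bound on $\tau D_{\tau}u_h^{n+1}$—and their product $\tau^{1/2}\Co_{4/3}^{3/2}$ is exactly what converts $\|\delta\delta u_h^{n+1}\|\,\|\tilde u_h^{n+1}\|$ into an absorbable term plus a constant-rate Gronwall term. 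The delicate part is the bookkeeping in every application of Young's inequality: one must verify that the dissipation coefficients stay below the thresholds fixed by $\tbar{u_h}^2$, using only the smallness of $\max\{\Co,\Co_{4/3}\}$ together with $\Pe>1$ to neutralize the $1/\Pe$ weights.
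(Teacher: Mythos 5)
Your proposal is correct and takes essentially the same route as the paper: the same reduction of $S_1$ via \eqref{eq:crucial_S1} and \eqref{S1part2}, the same use of the scheme equation to trade the time increment for the spatial operators, and the same double application of \eqref{eq:43cont} to produce the decisive $\sqrt{\tau}\,(\Co_{4/3})^{3/2}\,\|\delta\delta u_h^{n+1}\|\,\|\tilde u_h^{n+1}\|$ term, followed by Young and Gronwall. The only (cosmetic) difference is that you place $C_h$ on $\tau D_\tau u_h^{n+1}$ and bound $\|\tau D_\tau u_h^{n+1}\|$ by testing \eqref{eq:scheme} with it, whereas the paper tests \eqref{eq:scheme} with $\psi_h = C_h\,\delta\delta u_h^{n+1}$; the resulting estimates are identical.
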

\begin{proof}

Using the previous proof we only have to bound $S_1$ in the case, $\Pe \ge 1$ and $p > 1$. We will use \eqref{eq:crucial_S1} and in particular use the same estimate \eqref{S1part2}. We are left to bound $\tau \sum_{n=2}^{N-1} c(\delta \delta u_h^{n+1}, \tau D_{\tau}
 u_h^{n+1})$. To this end, we set $\psi_h= C_h \delta \delta u_h^{n+1}$ and see that 
\[
c(\delta \delta u_h^{n+1}, \tau D_{\tau}
 u_h^{n+1}) = \tau  (D_{\tau} u_h^{n+1}, \psi_h)_\Omega.
\]
Using the definition of the method \eqref{eq:scheme} it follows that
\[
\tau^2  (D_{\tau} u_h^{n+1}, \psi_h)_\Omega =M_1+M_2+M_3+M_4.
\]
where
\begin{alignat*}{2}
M_1:&= -\tau^2 c(\tilde u_h^{n+1},  \psi_h), \quad &&M_2 :=-\tau^2 \gamma s(\tilde u_h^{n+1},  \psi_h), \\
M_3:&= - \tau^2  a(u_h^{n+1},  \psi_h), \quad  && 
M_4:=\tau^2 L^{n+1} (\psi_h).
\end{alignat*}
We use again the definition of $C_h$ and the estimate \eqref{eq:43cont} to obtain
\begin{alignat*}{1}
M_1=-\tau^2(C_h \tilde u_h^{n+1}, \psi_h)_{\Omega} \le  \tau^2 \|C_h \tilde u_h^{n+1}\| \|\psi_h\| \le C \sqrt{\tau} (\Co_{4/3})^{3/2} \| \tilde u_h^{n+1}\| \|\delta \delta u_h^{n+1}\|.
\end{alignat*}
Using \eqref{preEbound1} and \eqref{ChCo} we have
\begin{alignat*}{1}
M_2 \le \tau^2 \gamma |\tilde u_h^{n+1}|_s  |\psi_h|_s \le   \tau^{3/2} \sqrt{\gamma} |\tilde u_h^{n+1}|_s  \sqrt{\gamma \Co}  \|\psi_h\| \le \sqrt{\tau} \sqrt{\gamma} |\tilde u_h^{n+1}|_s   \sqrt{\gamma} \Co^{3/2} \|\delta \delta u_h^{n+1}\|. 
\end{alignat*}
Similarly, we bound $M_3$ if we use \eqref{preEbound2}
\begin{alignat*}{1}
M_3 \le \tau^2  \|\sqrt{\mu} \nabla u_h^{n+1}\|  \| \sqrt{\mu} \nabla \psi_h\| \le  C \tau^2 \|\sqrt{\mu} \nabla u_h^{n+1}\|   \frac{\sqrt{\mu}}{h}  \|\psi_h\| \le   C \sqrt{\tau} \|\sqrt{\mu} \nabla u_h^{n+1}\| \ \frac{\Co^{3/2}}{\sqrt{\Pe}} \|\delta \delta u_h^{n+1}\|.
\end{alignat*}

Finally, using \eqref{eq:Ebound}
\begin{alignat*}{1}
M_4 \le& \tau^2 \|L_h^{n+1}\|_h \sqrt{E(\psi_h)^2+ \|\psi_h\|^2} \\
\le &C \sqrt{\tau}   \|L_h^{n+1}\|_h   \left(\sqrt{\gamma Co} + \frac{\sqrt{Co}}{\sqrt{Pe}}+ \sqrt{\tau}\right) \tau   \|\psi_h\|.
\end{alignat*}
Hence, applying \eqref{ChCo} we get 
\begin{alignat*}{1}
M_4 \le \tau^2 \|L_h^{n+1}\|_h \sqrt{E(\psi_h)^2+ \|\psi_h\|^2} \le C \sqrt{\tau}   \|L_h^{n+1}\|_h   \Co  \|\delta \delta u_h^{n+1}\|,
\end{alignat*}
where we use that  $\left(\gamma Co + \frac{Co}{Pe}+ \sqrt{\tau}\right)$ is bounded.

Hence, after using Young's inequality, \eqref{S1part2} and the fac that $\Pe > 1$ we arrive at
\begin{alignat*}{1}\nonumber
S_1 \le&   C \Co (\|u_h^N\|^2+\|\tilde{u}_h^{N+1}\|^2)+  C \Co (\|u_h^0\|^2+\|u_h^{1}\|^2+\|u_h^2\|^2) \\\nonumber
&+ \Co \sum_{n=2}^{N-1} \tau E (u_h^{n+1})^2 + C \big(\Co^{2} + (\Co_{4/3})^3\big) \sum_{n=2}^{N-1} \|\delta \delta u_h^{n+1}\|^2\\
  &+ \tau \sum_{n=2}^{N-1}     \|
  u_h^{n+1}\|^2 +\tau     \sum_{n=2}^{N-1} \|L^{n+1}\|_h^2.
\end{alignat*}
We now use \eqref{aux436} and the estimates $S_2$ and $S_3$ from the previous theorem. In addition, we use that $\Co$ and $\Co_{4/3}$ is sufficiently small to obtain
\begin{alignat*}{1}
&\frac18 (
  \|u_h^{N}\|^2 + \|\tilde u_h^{N+1}\|^2)  + \frac{1}{2} |||u_h|||^2  \\
&  \le (\|u_h^{0}\|^2 + \| u_h^1\|^2+\|u_h^2\|^2 )+ 9 \tau \sum_{n=1}^{N-1} \|L^{n+1}\|_h^2   +(\frac{c_L}{32}+1) \tau  \sum_{n=1}^{N-1} \|u_h^{n+1}\|^2. 
\end{alignat*}
The  estimate \eqref{thmresult4} now follows from Gronwalls inequality \eqref{gronwall}.
\end{proof}

We notice that $\|u_h^2\|$ appear in the right-hand side of some of the estimates; see for example \eqref{thmresult4}. However, we we can easily show (we omit the details) that if $\Co$ sufficiently small  
\begin{equation*}
\|u_h^2\| \le 2( \|u_h^{0}\|^2 +\|u_h^{1}\|^2)+ C\tau \|L^{2}\|_h^2.
\end{equation*}

If we combine Theorems \ref{thm:stab} and \ref{thm:stab2}  with this last inequality we get. 

\begin{corollary}
Let  $T=N \tau$. Let $\{u_h^n\}$ solving \eqref{eq:scheme} we have the following bounds:

If $\Pe \le 1$ and  for all $p \ge 1$, if $\Co$ is sufficiently small we have :
\begin{alignat}{1}
\|u_h^{N}\|^2   \le &   (1+\frac{Tc_L}{8} e^{\frac{Tc_L}{8}}) (\|u_h^{0}\|^2 +\|u_h^{1}\|^2 +32 \tau \sum_{n=1}^{N-1} \|L^{n+1}\|_h^2 )  . \label{cor1}
\end{alignat}

If $\Pe > 1$,  $p=1$, $\gamma>0$  and $\Co$ sufficiently small we obtain:
\begin{alignat}{1}
\|u_h^{N}\|^2  \le &     C\big(\|u_h^{0}\|^2 +\|u_h^{1}\|^2+ \tau \sum_{n=1}^{N-1} \|L^{n+1}\|_h^2\big) M,  \label{cor2}
\end{alignat}
where 
\begin{equation}\label{Mhere}
M=8\Big(1+c T (\tau \|\nabla \beta\|_{\infty}^2+c_L) e^{c T (\tau \|\nabla \beta\|_{\infty}^2+c_L)}\Big). 
\end{equation}

If $\Pe > 1$,  $p  \ge 1$,  and $\max \{\Co, \Co_{4/3}\}$ is  sufficiently small we get:
\begin{alignat}{1}
\|u_h^{N}\|^2  \le &    C (\|u_h^{0}\|^2 +\|u_h^{1}\|^2+ \tau \sum_{n=1}^{N-1} \|L^{n+1}\|_h^2) M,  \label{cor3}
\end{alignat}
where $M$ is given in \eqref{Mhere}.
\end{corollary}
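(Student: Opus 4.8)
The plan is to obtain the three bounds by combining the stability estimates already proved with a single auxiliary one-step estimate on $\|u_h^2\|$; no new global energy argument is required. For the regime $\Pe \le 1$ there is in fact nothing to do: the asserted bound \eqref{cor1} coincides verbatim with the conclusion \eqref{thmresult1} of Theorem \ref{thm:stab}, whose right-hand side already involves only $\|u_h^0\|$, $\|u_h^1\|$ and the data sum $\tau\sum_{n=1}^{N-1}\|L^{n+1}\|_h^2$. So I would simply invoke \eqref{thmresult1} and record \eqref{cor1}.

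For the two high-Peclet cases I would start from \eqref{thmresult2} (valid for $p=1$, $\gamma>0$) and from \eqref{thmresult4} (valid for $p\ge 1$ under the $4/3$-CFL), both of which read
\[
\|u_h^N\|^2 \le \bigl(\|u_h^0\|^2 + \|u_h^1\|^2 + \|u_h^2\|^2 + 9\tau\sum_{n=1}^{N-1}\|L^{n+1}\|_h^2\bigr) M,
\]
with $M$ as in \eqref{Mhere}. The only discrepancy with the target inequalities \eqref{cor2} and \eqref{cor3} is the extra start-up term $\|u_h^2\|^2$. I would eliminate it with the auxiliary bound $\|u_h^2\|^2 \le 2(\|u_h^0\|^2 + \|u_h^1\|^2) + C\tau\|L^2\|_h^2$ stated just above the corollary: substituting gives $\|u_h^0\|^2 + \|u_h^1\|^2 + \|u_h^2\|^2 \le 3(\|u_h^0\|^2 + \|u_h^1\|^2) + C\tau\|L^2\|_h^2$, and since $\|L^2\|_h^2$ is precisely the $n=1$ term of $\sum_{n=1}^{N-1}\|L^{n+1}\|_h^2$, the new contribution is absorbed into the data sum at the cost of enlarging its constant. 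Collecting the resulting numerical constants into a single generic $C$ in front of the parenthesis then yields \eqref{cor2} and \eqref{cor3} with $M$ unchanged.

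The only genuinely new ingredient, and where I would spend the effort, is the auxiliary one-step bound on $\|u_h^2\|^2$. I would test \eqref{eq:scheme} at $n=1$ with $v_h = u_h^2$; expanding $2\tau(D_\tau u_h^2, u_h^2)_\Omega = (3u_h^2 - 4u_h^1 + u_h^0, u_h^2)_\Omega$ and applying Cauchy--Schwarz with Young gives a lower bound $c_0\|u_h^2\|^2 - C(\|u_h^0\|^2 + \|u_h^1\|^2)$ with $c_0>0$, while $2\tau a(u_h^2,u_h^2)\ge 0$ is nonnegative and may be dropped. The explicit convection and stabilisation $2\tau c_h(\tilde u_h^2, u_h^2)$, with $\tilde u_h^2 = 2u_h^1 - u_h^0$, are controlled by \eqref{eq:c_cont} and \eqref{eq:s_cont} to give a term of the form $C\Co\|\tilde u_h^2\|\,\|u_h^2\|$; since $\|\tilde u_h^2\| \le 2\|u_h^1\| + \|u_h^0\|$, Young's inequality together with the smallness of $\Co$ absorbs the $\|u_h^2\|^2$ part into the left-hand side. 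The load $2\tau L^2(u_h^2)$ is handled via \eqref{eq:Ldef} and \eqref{eq:Ebound}, producing $C\tau\|L^2\|_h^2$ plus terms that, using $\Pe>1$ and $\Co$ small, are again absorbable into $c_0\|u_h^2\|^2$. The expected obstacle is precisely this bookkeeping: checking that every term apart from $\|u_h^2\|^2$, $\|u_h^0\|^2$, $\|u_h^1\|^2$ and $\tau\|L^2\|_h^2$ can be made absorbable through the smallness of $\Co$ alone, without any summation or Gronwall step, so that the start-up estimate remains purely local in time.
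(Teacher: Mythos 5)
Your proposal is correct and follows exactly the paper's route: the first bound is just \eqref{thmresult1}, and the other two come from \eqref{thmresult2} and \eqref{thmresult4} after eliminating $\|u_h^2\|^2$ via the one-step start-up estimate $\|u_h^2\|^2 \le 2(\|u_h^0\|^2+\|u_h^1\|^2)+C\tau\|L^2\|_h^2$ and absorbing $\tau\|L^2\|_h^2$ into the data sum. The paper states this auxiliary estimate without proof ("we omit the details"), so your sketch of its derivation by testing \eqref{eq:scheme} at $n=1$ with $u_h^2$ and absorbing the explicit terms through the smallness of $\Co$ supplies precisely the detail the paper leaves out, and it goes through as you describe.
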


\begin{corollary}
Let  $T=N \tau$. Let $\{u_h^n\}$ solving \eqref{eq:scheme}.
If $\Pe > 1$,  $p=1$, $\gamma>0$  and $\Co$ sufficiently small we obtain:
\begin{alignat}{1}
||||u_h|||^2  \le &     C \big(1+ T (\tau \|\nabla \beta\|_{\infty}^2+c_L)\big) \big(\|u_h^{0}\|^2 +\|u_h^{1}\|^2+ \tau \sum_{n=1}^{N-1} \|L^{n+1}\|_h^2\big) M,  \label{corformat}
\end{alignat}
and $M$ is given in \eqref{Mhere}. 
\end{corollary}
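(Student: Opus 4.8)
The plan is to extract the triple-norm bound directly from the energy identity already assembled in the proof of Theorem~\ref{thm:stab}, rather than starting afresh. Indeed, in Case~2 of that proof ($\Pe>1$, $p=1$) the estimate \eqref{aux991} was derived, namely
\[
\frac18 \big(\|u_h^{N}\|^2 + \|\tilde u_h^{N+1}\|^2\big) + \frac{1}{2}|||u_h|||^2 \le \big(\|u_h^{0}\|^2 +\|u_h^{1}\|^2 + \|u_h^{2}\|^2\big) + C \tau(\tau \|\nabla \beta\|_{\infty}^2+c_L) \sum_{n=1}^{N-1} \|u_h^{n+1}\|^2 + 9\tau \sum_{n=1}^{N-1} \|L^{n+1}\|_h^2,
\]
which already exhibits $\tfrac12|||u_h|||^2$ on the left-hand side. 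The only quantity on the right that is not yet controlled in terms of the data is the sum $\tau\sum_{n=1}^{N-1}\|u_h^{n+1}\|^2$, so the entire task reduces to bounding this sum.

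First I would control $\tau\sum_{n=1}^{N-1}\|u_h^{n+1}\|^2$ by invoking the stability estimate already established. The bound \eqref{cor2} of the preceding corollary holds at every intermediate final time, so applying it with $N$ replaced by $n+1$ gives
\[
\|u_h^{n+1}\|^2 \le C \big(\|u_h^{0}\|^2 +\|u_h^{1}\|^2+ \tau \sum_{k=1}^{n} \|L^{k+1}\|_h^2\big) M
\]
for each $n$, with $M$ as in \eqref{Mhere}. Summing over $n=1,\dots,N-1$, multiplying by $\tau$, and using $\tau(N-1)\le T$ then yields
\[
\tau\sum_{n=1}^{N-1}\|u_h^{n+1}\|^2 \le C\, T \big(\|u_h^{0}\|^2 +\|u_h^{1}\|^2+ \tau \sum_{n=1}^{N-1} \|L^{n+1}\|_h^2\big) M.
\]
No Gronwall argument is needed at this stage, since the $\|u_h^{n+1}\|$ sum has already been closed off by the $l^\infty$-in-time stability estimate.

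Next I would substitute this into \eqref{aux991}, absorb the $\|u_h^2\|^2$ term using the a priori bound $\|u_h^2\|^2\le 2(\|u_h^{0}\|^2+\|u_h^{1}\|^2)+C\tau\|L^{2}\|_h^2$ recorded just before the preceding corollary, and collect terms. The data terms $\|u_h^0\|^2+\|u_h^1\|^2+\tau\sum_n\|L^{n+1}\|_h^2$ contribute the leading constant factor, while the sum just bounded contributes the factor $T(\tau\|\nabla\beta\|_\infty^2+c_L)$, so that together they reassemble into the prefactor $\big(1+T(\tau\|\nabla\beta\|_\infty^2+c_L)\big)$ of \eqref{corformat}. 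The step I expect to require the most care is this final bookkeeping: one must check that the constant $M$ in the per-step stability bound is uniform in $n$, which it is because $M$ is monotone increasing in the final time $T' = (n+1)\tau \le T$ and otherwise depends only on fixed data, and that multiplying the sum bound by $C(\tau\|\nabla\beta\|_\infty^2+c_L)$ and combining with the data terms indeed produces the single prefactor $\big(1+T(\tau\|\nabla\beta\|_\infty^2+c_L)\big)M$ with no spurious growth. Everything else is a direct transcription of estimates already proved.
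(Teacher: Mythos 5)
Your argument is correct and is exactly the route the paper intends (the corollary is stated without proof, but \eqref{aux991} together with the pointwise-in-time stability bound \eqref{cor2} applied at each intermediate step, plus the recorded bound on $\|u_h^2\|^2$, is the evident derivation). The one point needing care --- that the constant $M$ from \eqref{Mhere} evaluated at intermediate times $(n+1)\tau\le T$ is dominated by its value at $T$ --- is handled correctly, so nothing is missing.
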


\section{A priori error estimate for BDF2-IMEX in the case $\Pe  >1$}\label{sec:apriori}
In this section we will study the error in the BDF2-IMEX method. We focus on the case $\Pe>1$ for simplicity.  If $u$ solves \eqref{weaku}, its approximation is given by:

Find $u_h^{n+1} \in \mVh$ such that for $n \ge 1$,

\begin{equation}\label{BDF2weak}
(D_{\tau} u_h^{n+1},v_h)_\Omega +c_h(\tilde u_h^{n+1}, v_h)+a(u_h^{n+1}, v_h) 
= (f, v_h)_{\Omega},\quad \forall v_h \in \mVh,
\end{equation}
with $u_h^i = \pi_h u(\cdot,t_i)$, $i=0,1$.

We can prove an error estimate in the case  of $\Pe  > 1$ (i.e. convection dominated regime).  
\begin{theorem}\label{thmerrorestimate}
Let $u$ be the solution of \eqref{weaku} and $\{u_h^n\}_{n=0}^N$ be the solution of
\eqref{BDF2weak}. Let $T=\tau N$  and assume that $\gamma >0$. Furthermore, suppose that $\Co$ is sufficiently small when $p=1$ and $\max\{\Co, \Co_{4/3}\}$ is sufficiently small when $p \ge 2$. Then, 
\begin{alignat}{1}
\|  \pi_h u(T)- u_h^N \|^2 \le  C(h^{2(p+1)} (\|u(t_0)\|_{H^{p+1}(\Omega)}^2+\|u(t_1)\|_{H^{p+1}(\Omega)}^2)+ \mathsf{G}) \mathsf{M} \label{mainerror}
\end{alignat}
where
\begin{alignat*}{1}
\mathsf{G}:=& \big(\tau^4 \|\partial_t^3 u\|_{L^2([0,T], L^2(\Omega))}^2+ \|\beta\|_{\infty}^2  \tau^4 \|\partial_t^2  u\|_{L^2([0,T], H^1(\Omega))}^2\big) \\
&+  (\mu  h^{2p}+\gamma  \|\beta\|_{\infty}  h^{2(p+1/2)} +\|\nabla \beta \|_{\infty}^2  h^{2(p+1)} ) \tau \sum_{j=1}^{N-1} \|u^{n+1}\|_{H^{p+1}(\Omega)}^2.
\end{alignat*}
and
\begin{equation*}
\mathsf{M}=\Big(1+c T (\tau \|\nabla \beta\|_{\infty}^2+1) e^{c T (\tau \|\nabla \beta\|_{\infty}^2+1)}\Big). 
\end{equation*}
\end{theorem}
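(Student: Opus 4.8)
The plan is to run a standard projection-based energy argument, reducing everything to the high-Peclet stability estimates of the previous section applied to the discrete error. I set $e_h^n := \pi_h u^n - u_h^n \in \mVh$ with $u^n = u(\cdot,t_n)$, so the target quantity is $\|e_h^N\|$. Defining the consistency functional $L^{n+1}(v_h) := (D_\tau \pi_h u^{n+1}, v_h)_\Omega + c_h(\pi_h\tilde u^{n+1}, v_h) + a(\pi_h u^{n+1}, v_h) - (f, v_h)_\Omega$ and subtracting the scheme \eqref{BDF2weak}, and using that extrapolation commutes with the linear projection so that $\tilde e_h^{n+1} = \pi_h\tilde u^{n+1} - \tilde u_h^{n+1}$, I find that $e_h^{n+1}$ solves
\[
(D_\tau e_h^{n+1}, v_h)_\Omega + c_h(\tilde e_h^{n+1}, v_h) + a(e_h^{n+1}, v_h) = L^{n+1}(v_h), \quad \forall v_h \in \mVh,
\]
i.e.\ an instance of \eqref{eq:scheme} with right-hand side $L^{n+1}$. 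The whole argument then reduces to (i) bounding $\tau\sum_{n=1}^{N-1}\|L^{n+1}\|_h^2$ by $C\ds$ and (ii) invoking \eqref{cor2} (for $p=1$) or \eqref{cor3} (for $p\ge 2$), whose right-hand sides have exactly the form $(\|e_h^0\|^2 + \|e_h^1\|^2 + \tau\sum\|L^{n+1}\|_h^2)\mathsf{M}$ with $c_L=1$, $\mathsf{M}$ read off from \eqref{Mhere}. The initial terms vanish for $u_h^i = \pi_h u(t_i)$, but retaining the harmless upper bound $Ch^{2(p+1)}(\|u(t_0)\|_{H^{p+1}(\Omega)}^2 + \|u(t_1)\|_{H^{p+1}(\Omega)}^2)$ from \eqref{eq:L2approx} covers a general $O(h^{p+1})$ initialisation and yields the first bracket of \eqref{mainerror}.

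The core of the work is decomposing $L^{n+1}$ and matching each piece to a term of $\ds$. Using \eqref{eq:L2proj}, $(D_\tau\pi_h u^{n+1}, v_h)_\Omega = (D_\tau u^{n+1}, v_h)_\Omega$, and testing the continuous form \eqref{weaku} at $t^{n+1}$ replaces $(f,v_h)_\Omega$ by $(\partial_t u^{n+1}, v_h)_\Omega + c(u^{n+1}, v_h) + a(u^{n+1}, v_h)$. Hence $L^{n+1}(v_h)$ splits into the BDF2 time-truncation $(D_\tau u^{n+1} - \partial_t u^{n+1}, v_h)_\Omega$; the extrapolation residual $c(\tilde u^{n+1} - u^{n+1}, v_h)$; the convection projection error $c(\pi_h\tilde u^{n+1} - \tilde u^{n+1}, v_h)$; the stabilisation consistency $\gamma s(\pi_h\tilde u^{n+1}, v_h)$; and the diffusion projection error $a(\pi_h u^{n+1} - u^{n+1}, v_h)$. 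The first two are controlled in $\|\cdot\|$ directly: \eqref{eq:BDFtime_trunc} gives $\|D_\tau u^{n+1} - \partial_t u^{n+1}\|\le C\tau^{3/2}\|u_{ttt}\|_{L^2(t^{n-1},t^{n+1};L^2(\Omega))}$, and since $\beta$ is time-independent $c(\tilde u^{n+1} - u^{n+1}, v_h) = (\tilde y^{n+1} - y^{n+1}, v_h)_\Omega$ with $y = \beta\cdot\nabla u$, so \eqref{eq:BDFextra} gives $C\tau^{3/2}\|\beta\|_\infty\|u_{tt}\|_{L^2(t^{n-1},t^{n+1};H^1(\Omega))}$; squaring, scaling by $\tau$ and summing (overlapping time windows give a bounded overlap factor) produces the $\tau^4\|\partial_t^3 u\|^2_{L^2(L^2)}$ and $\|\beta\|_\infty^2\tau^4\|\partial_t^2 u\|^2_{L^2(H^1)}$ terms of $\ds$. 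The diffusion term is immediate by Cauchy--Schwarz: $a(\pi_h u^{n+1} - u^{n+1}, v_h)\le \|\mu^{1/2}\nabla(I-\pi_h)u^{n+1}\|\,E(v_h)\le C\mu^{1/2}h^p|u^{n+1}|_{H^{p+1}(\Omega)}E(v_h)$, giving the $\mu h^{2p}$ term.

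The delicate steps, which are the main obstacle, are the two convection-type residuals, because in the high-Peclet regime $\|\mu^{1/2}\nabla v_h\|$ is too weak to absorb them and one must extract the half-order gain from the stabilisation. For the convection projection error I use skew-symmetry to write $c(\pi_h\tilde u^{n+1} - \tilde u^{n+1}, v_h) = \int_\Omega \beta\cdot\nabla v_h\,(I-\pi_h)\tilde u^{n+1}\,\mbox{d}x$, subtract an arbitrary $w_h\in\mVh$ (free because $(I-\pi_h)\tilde u^{n+1}\perp\mVh$), and apply the critical estimate \eqref{interpinq2} to $\beta\cdot\nabla v_h - w_h$; with $\|(I-\pi_h)\tilde u^{n+1}\|\le Ch^{p+1}|\tilde u^{n+1}|_{H^{p+1}(\Omega)}$ this yields a bound $C(\|\nabla\beta\|_\infty h^{p+1}\|v_h\| + \|\beta\|_\infty^{1/2}h^{p+1/2}|v_h|_s)|\tilde u^{n+1}|_{H^{p+1}(\Omega)}$, the second factor being controlled through $\gamma|v_h|_s^2\le E(v_h)^2$. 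For the stabilisation consistency, $\tilde u^{n+1}$ is smooth so $|\tilde u^{n+1}|_s = 0$ and hence $|\pi_h\tilde u^{n+1}|_s = |(I-\pi_h)\tilde u^{n+1}|_s\le C\|\beta\|_\infty^{1/2}h^{p+1/2}|\tilde u^{n+1}|_{H^{p+1}(\Omega)}$ by a trace/approximation estimate on the gradient jumps; Cauchy--Schwarz on $s(\cdot,\cdot)$ together with $\gamma^{1/2}|v_h|_s\le E(v_h)$ then gives $C\gamma^{1/2}\|\beta\|_\infty^{1/2}h^{p+1/2}|\tilde u^{n+1}|_{H^{p+1}(\Omega)}E(v_h)$. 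Squaring, scaling by $\tau$ and summing (with $|\tilde u^{n+1}|_{H^{p+1}(\Omega)}\le 2|u^n|_{H^{p+1}(\Omega)} + |u^{n-1}|_{H^{p+1}(\Omega)}$) produces the remaining $\|\nabla\beta\|_\infty^2 h^{2(p+1)}$ and $\gamma\|\beta\|_\infty h^{2(p+1/2)}$ terms of $\ds$. Collecting all five bounds gives $\tau\sum_{n=1}^{N-1}\|L^{n+1}\|_h^2\le C\ds$; substituting into \eqref{cor2}/\eqref{cor3} finishes the proof. I expect the bookkeeping of the $\gamma$-powers in the convection projection error, where \eqref{interpinq2} naturally couples $\gamma^{-1/2}|v_h|_s$ with $\gamma^{1/2}$ (so the $\gamma$-dependence is that of a fixed $O(1)$ penalty parameter), to be the only subtle point in matching the precise form of $\ds$.
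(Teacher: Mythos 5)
Your proposal is correct and follows essentially the same route as the paper's own proof: the identical error equation for $e_h^n=\pi_h u^n-u_h^n$ with the same five-way decomposition of the consistency functional $L^{n+1}$ (BDF2 truncation, extrapolation residual, convection and diffusion projection errors, stabilisation consistency), the same use of skew-symmetry, $L^2$-orthogonality and \eqref{interpinq2} to handle the convection projection error, and the same final appeal to the stability bounds \eqref{cor2}/\eqref{cor3}. No gaps to report.
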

\begin{proof}
Let $w_h^n=\pi_h u^n$ and let $e_h^n =w_h^n  - u_h^n$ where $\pi_h$ is
defined by \eqref{eq:L2proj}. Moreover, we let $\eta_h^n=w_h^n-u^n$. Then, we have that 
\begin{equation}\label{eherroreq}
(D_{\tau} e_h^{n+1},v_h)_\Omega +c_h(\tilde e_h^{n+1}, v_h)+a(e_h^{n+1}, v_h) 
= L^{n+1}(v_h),\quad \forall v_h \in \mVh,
\end{equation}
where 
\begin{equation*}
 L^{n+1}(v_h):= (D_{\tau} w_h^{n+1},v_h)_\Omega +c_h(\tilde w_h^{n+1}, v_h)+a(w_h^{n+1}, v_h) 
 -  \Big((\partial_t u^{n+1},v_h)_\Omega +c(u^{n+1}, v_h)+a(u^{n+1}, v_h)\Big).
\end{equation*}
We can write
\begin{alignat*}{1}
 L^{n+1}(v_h):=& \sum_{j=1}^5 \Psi_j(v_h),
 \end{alignat*}
 where 
\begin{alignat*}{2}
\Psi_1(v_h):=&(D_{\tau} u^{n+1}-\partial_t u^{n+1}, v_h)_{\Omega}, \quad &&  \Psi_2(v_h):= c(\tilde{\eta}_h^{n+1}, v_h),  \\
\Psi_3(v_h):=& \gamma s(\tilde{\eta}_h^{n+1}, v_h), \quad && \Psi_4(v_h):= a(\eta_h^{n+1}, v_h),  \\
 \Psi_5(v_h):=&   c(\tilde{u}^{n+1}-u^{n+1}, v_h). \quad &&
\end{alignat*}
All the terms above can easily be bounded. However, we have to pay
special care to  $\Psi_2(v_h)$. Using the skew-symmetry of $c$, and
the $L^2$-orthogonality of
$\tilde{\eta}_h^{n+1}$ we can subtract an arbitrary $z_h \in \mVh$ from
the convective derivative. Then by the Cauchy-Schwarz inequality we
see that 
\begin{alignat*}{1}
 \Psi_2(v_h)=&-c(v_h, \tilde{\eta}_h^{n+1})=-(\beta \cdot \nabla v_h -
 z_h,  \tilde{\eta}_h^{n+1})_{\Omega} 
\leq  \inf_{z_h \in \mVh} \|\beta \cdot \nabla v_h-z_h\| \|\tilde{\eta}_h^{n+1}\| .
\end{alignat*}
Hence,  using  \eqref{interpinq2} we obtain
 \begin{alignat*}{1}
 |\Psi_2(v_h)| 
 \le &C(\|\nabla \beta\|_{\infty} \|v_h\|+ \frac{\sqrt{\|\beta\|_{\infty}}}{\sqrt{h} \sqrt{\gamma}} \sqrt{\gamma}|v_h|_s)  \|\tilde{\eta}_h^{n+1}\|.
\end{alignat*}
As a consequence of this bound for $\Psi_2$ and by bounding all the
other terms $\Psi_i$ using the Cauchy-Schwarz inequality we have
\begin{alignat*}{1}
\|L^{n+1}\|_h \le & \|D_{\tau} u^{n+1}-\partial_t u^{n+1}\|+ \|\beta \cdot  \nabla (\tilde{u}^{n+1}-u^{n+1})\|\\
&+C(\|\nabla \beta\|_{\infty} + \frac{\sqrt{\|\beta\|_{\infty}}}{\sqrt{h} \sqrt{\gamma}}) \|\tilde{\eta}_h^{n+1}\|+C \sqrt{\gamma} | \tilde{\eta}_h^{n+1}|_s+ C \| \sqrt{\mu} \nabla \eta_h^{n+1}\|_{L^2(\Omega)}. 
\end{alignat*}
Using  \eqref{eq:BDFtime_trunc},  \eqref{eq:BDFextra} and  \eqref{eq:BDFextra}, the square of the first four terms of the right hand side can be bounded as follows. 
\begin{alignat*}{1}
\|D_{\tau} u^{n+1}-\partial_t u^{n+1}\|^2 \le &  C \tau^3 \int_{t_n}^{t_{n+1}} \|\partial_t^3 u(\cdot, s)\|^2 ds, \quad \\
\|\beta \cdot  \nabla (\tilde{u}^{n+1}-u^{n+1})\|^2 \le & C
\|\beta\|_{\infty}^2  \tau^3 \|\partial_t^2 \nabla u
\|^2_{L^2(t_{n-1},t_{n+1};L^2(\Omega))} , \quad \\
\| \tilde{\eta}_h^{n+1}\|^2 \le & C h^{2(p+1)} \|\tilde{u}^{n+1}\|_{H^{p+1}(\Omega)}^2,  \\
 \gamma | \tilde{\eta}_h^{n+1}|_s^2  \le & C \gamma \|\beta\|_{\infty} h^{2(p+1/2)} \|\tilde{u}^{n+1}\|_{H^{p+1}(\Omega)}^2 \\
  \| \sqrt{\mu} \nabla \eta_h^{n+1}\|_{L^2(\Omega)} \le & \mu h^{2p}  \|u^{n+1}\|_{H^{p+1}(\Omega)}^2.
\end{alignat*}
Therefore, combining the above inequalities we get
\begin{alignat}{1}
\tau \sum_{n=1}^{N-1} \|L^{n+1}\|_h^2 \le & C \big(\tau^4 \|\partial_t^3 u\|_{L^2([0,T], L^2(\Omega))}^2+ \|\beta\|_{\infty}^2  \tau^4 \|\partial_t^2  u\|_{L^2([0,T], H^1(\Omega))}^2\big) \nonumber \\
&+ C (\mu  h^{2p}+\gamma  \|\beta\|_{\infty}  h^{2(p+1/2)} +\|\nabla \beta \|_{\infty}^2  h^{2(p+1)} ) \tau \sum_{j=1}^{N-1} \|u^{n+1}\|_{H^{p+1}(\Omega)}^2. \label{L103}
\end{alignat}
The result now follows if we apply \eqref{cor1} and \eqref{cor2}. 
\end{proof}
\subsection{Error estimate for the material derivative}
In this section we prove error estimates for the material derivative.  We start with a lemma that shows that the projection of the material derivative superconverges. 
\begin{lemma}\label{lemmaMaterial}
Let $u$ be the solution of \eqref{weaku} and $\{u_h^n\}_{n=0}^N$ be the solution of
\eqref{BDF2weak}. Let $T=\tau N$, $p=1$, $\gamma >0$ and $\Pe >1$. Furthermore, suppose that $\Co$ is sufficiently small. If $m_h^{n+1}=D_{\tau} e_h^{n+1}+\beta \cdot \nabla \tilde{e}_h^{n+1}$ then
\begin{alignat}{1}
\tau \sum_{n=1}^{N-1} h \|\pi_h m_h^{n+1}\|^2 \le &  C  h((\gamma+1)  \|\beta \|_{\infty}+1)  \big(1+ T (\tau \|\nabla \beta\|_{\infty}^2+1)\big) \nonumber  \\
& \quad \times \big(h^{2(p+1)} (\|u(t_0)\|_{H^{p+1}(\Omega)}^2+\|u(t_0)\|_{H^{p+1}(\Omega)}^2) +\mathsf{G}\big)M \label{cormaterial1}  
\end{alignat}
\begin{proof}
Using \eqref{eherroreq} we get
\begin{alignat*}{1}
h \|\pi_h m_h^{n+1}\|^2=& h(m_h^{n+1}, \pi_h m_h^{n+1})_{\Omega} = S_1+S_2+S_3,
\end{alignat*}
where
\begin{alignat*}{1}
S_1=& -h\, a(e_h^{n+1},  \pi_h m_h^{n+1}),\\
S_2=&-h\,\gamma s(e_h^{n+1},  \pi_h m_h^{n+1}), \\
S_3=& h L^{n+1}( \pi_h m_h^{n+1}).
\end{alignat*}
We can easily show the following estimates
\begin{alignat*}{1}
S_1 \le&\sqrt{\mu} \|\sqrt{\mu} \nabla e_h^{n+1}\| \| \pi_h m_h^{n+1}\|, \\
S_2 \le &    C \sqrt{h} \sqrt{\gamma } \sqrt{\|\beta \|_{\infty}} \sqrt{\gamma} |e_h^{n+1}|_s \| \pi_h m_h^{n+1}\|,\\
S_3 \le & \|L^{n+1}\|_h  (\sqrt{\mu} +\sqrt{h}\sqrt{\gamma } \sqrt{\|\beta \|_{\infty}} + h)\| \pi_h m_h^{n+1}\|.
\end{alignat*}
Thus, we get 
\begin{alignat*}{1}
h \|\pi_h m_h^{n+1}\|^2\le &   C (\mu+ h\gamma \|\beta \|_{\infty}) (E(e_h^{n+1})^2 + \|L^{n+1}\|_h^2)  \\
& \le C h((\gamma+1)  \|\beta \|_{\infty}+1) (E(e_h^{n+1})^2 + \|L^{n+1}\|_h^2) 
\end{alignat*}
where we used that $\Pe >1$. 
This proves the following:
\begin{equation}
\tau \sum_{n=1}^{N-1} h \|\pi_h m_h^{n+1}\|^2 \le  C h((\gamma+1)  \|\beta \|_{\infty}+1)  (|||e_h|||^2+\tau \sum_{n=1}^{N-1}   \|L^{n+1}\|_h^2). \label{cormaterial1}  
\end{equation}
We have shown (see  \eqref{L103}):
\begin{equation*}
\tau \sum_{n=1}^{N-1} \|L^{n+1}\|_h^2 \le C \mathsf{G}. 
\end{equation*}

By using \eqref{corformat} we have 
\begin{alignat*}{1}
||||e_h|||^2  \le &     C \big(1+ T (\tau \|\nabla \beta\|_{\infty}^2+1)\big) \big(\|e_h^{0}\|^2 +\|e_h^{1}\|^2+ \tau \sum_{n=1}^{N-1} \|L^{n+1}\|_h^2\big)M.
\end{alignat*}
Thus, 
\begin{alignat}{1}\label{ehtriple}
||||e_h|||^2  \le &     C \big(1+ T (\tau \|\nabla \beta\|_{\infty}^2+1)\big) \big(h^{2(p+1)} (\|u(t_0)\|_{H^{p+1}(\Omega)}^2+\|u(t_0)\|_{H^{p+1}(\Omega)}^2) +\mathsf{G}\big)M.
\end{alignat}

\end{proof}

\end{lemma}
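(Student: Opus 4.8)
The plan is to test the error equation \eqref{eherroreq} against the $L^2$-projection of the discrete material derivative itself, $v_h=\pi_h m_h^{n+1}\in\mVh$, and to exploit a structural cancellation. Since $\pi_h$ is the $L^2$-projection onto $\mVh$ one has $(m_h^{n+1},\pi_h m_h^{n+1})_\Omega=\|\pi_h m_h^{n+1}\|^2$, while by the very definition $m_h^{n+1}=D_{\tau}e_h^{n+1}+\beta\cdot\nabla\tilde e_h^{n+1}$ and of the form $c$ we have $(m_h^{n+1},v_h)_\Omega=(D_{\tau}e_h^{n+1},v_h)_\Omega+c(\tilde e_h^{n+1},v_h)$. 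These are exactly the non-symmetric contributions in \eqref{eherroreq}, so subtracting that identity makes the time-derivative and the transport terms disappear, leaving
\[
\|\pi_h m_h^{n+1}\|^2 = L^{n+1}(\pi_h m_h^{n+1}) - \gamma s(\tilde e_h^{n+1},\pi_h m_h^{n+1}) - a(e_h^{n+1},\pi_h m_h^{n+1}).
\]
Thus the projected material derivative of the error is driven only by the symmetric dissipation and by the consistency residual, both of which are already under control.

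Next I would estimate each term after multiplying by $h$, writing $h\|\pi_h m_h^{n+1}\|^2=S_1+S_2+S_3$ with $S_1=-h\,a(e_h^{n+1},\pi_h m_h^{n+1})$, $S_2=-h\gamma s(\tilde e_h^{n+1},\pi_h m_h^{n+1})$ and $S_3=h\,L^{n+1}(\pi_h m_h^{n+1})$. For $S_1$ I would use Cauchy--Schwarz together with the inverse inequality $\|\nabla\pi_h m_h^{n+1}\|\le C h^{-1}\|\pi_h m_h^{n+1}\|$, so that the prefactor $h$ absorbs the $h^{-1}$; for $S_2$ the bound $\gamma|\cdot|_s^2\le C\gamma(\|\beta\|_\infty+1)h^{-1}\|\cdot\|^2$ from \eqref{preEbound1} applied to $\pi_h m_h^{n+1}$; and for $S_3$ the definition \eqref{eq:Ldef} of $\|L^{n+1}\|_h$ together with \eqref{preEbound1}--\eqref{preEbound2} to control $E(\pi_h m_h^{n+1})$. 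Each estimate carries a factor $\|\pi_h m_h^{n+1}\|$, which I would remove by a Young step; invoking $\Pe>1$, i.e. $\mu\le h\|\beta\|_\infty$, to collapse the diffusive coefficient then yields the per-step bound
\[
h\|\pi_h m_h^{n+1}\|^2 \le C\big(\mu+h\gamma\|\beta\|_\infty\big)\big(E(e_h^{n+1})^2+\|L^{n+1}\|_h^2\big)\le C h\big((\gamma+1)\|\beta\|_\infty+1\big)\big(E(e_h^{n+1})^2+\|L^{n+1}\|_h^2\big).
\]
The $\tilde e_h^{n+1}$ in $S_2$ may be replaced by $e_h^{n+1}$ at the cost of the increment $|e_h^{n+1}-\tilde e_h^{n+1}|_s$, which by \eqref{eq:s_cont} is controlled by $\|e_h^{n+1}-\tilde e_h^{n+1}\|$, a quantity already measured by $|||e_h|||$.

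Finally I would sum over $n=1,\dots,N-1$ and multiply by $\tau$. On the right-hand side $\tau\sum_n E(e_h^{n+1})^2\le|||e_h|||^2$ by the definition of the triple norm, and $\tau\sum_n\|L^{n+1}\|_h^2\le C\mathsf{G}$ is exactly the residual bound \eqref{L103} established in the proof of Theorem \ref{thmerrorestimate}. It then remains to insert the stability estimate \eqref{corformat}/\eqref{ehtriple} for $|||e_h|||^2$ to obtain the stated inequality.

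The main obstacle is the first, structural, step: recognizing that the discrete material derivative $m_h^{n+1}$ is precisely the combination of terms that the scheme controls, so that testing with its projection annihilates the delicate hyperbolic part and reduces the whole estimate to the symmetric dissipation plus the residual. The second genuine difficulty is the bookkeeping of mesh-dependent weights, namely spending the prefactor $h$ exactly against the $h^{-1}$ (and $h^{-2}$) produced by the inverse inequalities and using $\Pe>1$ to trade $\mu$ for $h\|\beta\|_\infty$; aligning these powers correctly is where an error would most easily creep in.
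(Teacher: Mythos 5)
Your proposal is correct and follows essentially the same route as the paper: test the error equation with $\pi_h m_h^{n+1}$ so the time-derivative and transport terms cancel against $(m_h^{n+1},\pi_h m_h^{n+1})_\Omega$, bound the remaining symmetric terms $a$, $s$ and the residual $L^{n+1}$ via inverse inequalities and $\Pe>1$, then sum and insert \eqref{L103} and \eqref{corformat}. Your handling of the extrapolated argument $\tilde e_h^{n+1}$ in the stabilization term (converting to $e_h^{n+1}$ via \eqref{eq:s_cont} and the increment controlled by $|||e_h|||$) is in fact slightly more careful than the paper, which writes $s(e_h^{n+1},\cdot)$ directly.
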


We can now prove an optimal estimate for the material derivative. 
\begin{theorem}
With the same hypothesis as in Lemma \ref{lemmaMaterial} the following estimate holds
\begin{alignat*}{1}
\tau \sum_{n=1}^{N-1} h \|m_h^{n+1}\|^2 \le &   C\Big(T h\|\nabla \beta\|_{\infty}^2  + \big(\|\beta\|_{\infty}+ h((\gamma+1)  \|\beta \|_{\infty}+1)\big) \big(1+T (\tau \|\nabla \beta\|_{\infty}^2+1)\big) \Big) \\
& \times  \Big(h^{p+1} (\|u(t_0)\|_{H^{p+1}(\Omega)}^2 +\|u(t_1)\|_{H^{p+1}(\Omega)}^2)+ \mathsf{G} \Big) \mathsf{M},
\end{alignat*}
where $\mathsf{G}$ and $\mathsf{M}$ are given in Theorem \ref{thmerrorestimate}. 
\end{theorem}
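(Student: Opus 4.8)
The plan is to split $m_h^{n+1}$ into its $L^2$-projection onto $\mVh$ and the orthogonal remainder. Since $\pi_h$ is the $L^2$-projection \eqref{eq:L2proj}, the Pythagorean identity gives $\|m_h^{n+1}\|^2 = \|\pi_h m_h^{n+1}\|^2 + \|(I-\pi_h) m_h^{n+1}\|^2$, so that $\tau\sum_{n=1}^{N-1} h\|m_h^{n+1}\|^2$ decomposes into a \emph{projected part} and an \emph{orthogonal part}. The projected part is precisely the quantity already controlled in Lemma \ref{lemmaMaterial}: inequality \eqref{cormaterial1}, combined with the triple-norm bound \eqref{ehtriple} and the residual bound \eqref{L103}, delivers the contribution carrying the factor $h((\gamma+1)\|\beta\|_\infty+1)(1+T(\tau\|\nabla\beta\|_\infty^2+1))$. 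It therefore remains only to estimate the orthogonal part $\tau\sum_{n=1}^{N-1} h\|(I-\pi_h)m_h^{n+1}\|^2$.

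The key observation for the orthogonal part is that $D_\tau e_h^{n+1}\in\mVh$, so that $(I-\pi_h)D_\tau e_h^{n+1}=0$ and hence $(I-\pi_h)m_h^{n+1} = (I-\pi_h)(\beta\cdot\nabla\tilde e_h^{n+1})$. Because $\pi_h$ realises the best $L^2(\Omega)$-approximation in $\mVh$, we have $\|(I-\pi_h)(\beta\cdot\nabla\tilde e_h^{n+1})\| = \inf_{v_h\in\mVh}\|\beta\cdot\nabla\tilde e_h^{n+1}-v_h\|$, and since $\tilde e_h^{n+1}=2e_h^n-e_h^{n-1}\in\mVh$ one may apply \eqref{interpinq2} with $w_h=\tilde e_h^{n+1}$. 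This yields $\|(I-\pi_h)m_h^{n+1}\|\le C(\|\nabla\beta\|_\infty\|\tilde e_h^{n+1}\| + (\|\beta\|_\infty/h)^{1/2}|\tilde e_h^{n+1}|_s)$; multiplying by $h$, squaring and summing gives
\[
\tau\sum_{n=1}^{N-1} h\|(I-\pi_h)m_h^{n+1}\|^2 \le C\tau\sum_{n=1}^{N-1}\big(h\|\nabla\beta\|_\infty^2\|\tilde e_h^{n+1}\|^2 + \|\beta\|_\infty|\tilde e_h^{n+1}|_s^2\big).
\]
This is the step where the gradient-jump stabilisation is indispensable, and I expect it to be the main obstacle: it is exactly \eqref{interpinq2}, through the seminorm $|\tilde e_h^{n+1}|_s$, that controls the part of $\beta\cdot\nabla\tilde e_h^{n+1}$ not representable in $\mVh$, so without the stabilisation the orthogonal part could not be closed.

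To finish, I would bound the two remaining sums. For the stabilisation sum, the triangle inequality together with \eqref{eq:s_cont} gives $|\tilde e_h^{n+1}|_s\le |e_h^{n+1}|_s + C\Co^{1/2}\tau^{-1/2}\|\tilde e_h^{n+1}-e_h^{n+1}\|$; recognising that both $\tau\sum\gamma|e_h^{n+1}|_s^2$ and $\sum\|\tilde e_h^{n+1}-e_h^{n+1}\|^2$ sit inside $|||e_h|||^2$, one obtains $\tau\sum\|\beta\|_\infty|\tilde e_h^{n+1}|_s^2 \lesssim \tfrac{\|\beta\|_\infty}{\gamma}|||e_h|||^2$, and then \eqref{ehtriple} produces the factor $\|\beta\|_\infty(1+T(\tau\|\nabla\beta\|_\infty^2+1))$. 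For the zeroth-order sum I would use $\|\tilde e_h^{n+1}\|^2\le C(\|e_h^n\|^2+\|e_h^{n-1}\|^2)$ and $\tau\sum_{n}\|e_h^n\|^2 \le T\max_n\|e_h^n\|^2$, where the $L^2$-stability estimate \eqref{cor2}, with $\tau\sum\|L^{n+1}\|_h^2\le C\mathsf{G}$ from \eqref{L103} and the approximation of the initial data, bounds $\max_n\|e_h^n\|^2$ by $C(h^{2(p+1)}(\cdots)+\mathsf{G})\mathsf{M}$; this produces the factor $T h\|\nabla\beta\|_\infty^2$. Collecting the projected contribution from Lemma \ref{lemmaMaterial} with these two orthogonal contributions, and factoring out the common quantity $(h^{2(p+1)}(\cdots)+\mathsf{G})\mathsf{M}$, yields the asserted estimate.
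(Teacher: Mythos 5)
Your proposal is correct and follows essentially the same route as the paper: both split $m_h^{n+1}$ into $\pi_h m_h^{n+1}$ (handled by Lemma \ref{lemmaMaterial}) and the orthogonal remainder, which by $D_\tau e_h^{n+1}\in\mVh$ and the best-approximation property of $\pi_h$ reduces to \eqref{interpinq2} applied to $\tilde e_h^{n+1}$, with the resulting $L^2$ and stabilisation sums absorbed via \eqref{mainerror} and \eqref{ehtriple}. Your treatment is in fact slightly more careful than the paper's (Pythagoras instead of the triangle inequality, and an explicit reduction of $|\tilde e_h^{n+1}|_s$ and $\|\tilde e_h^{n+1}\|$ to the untilded quantities, which the paper elides).
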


\begin{proof}
The triangle inequality gives
\begin{equation*}
\tau \sum_{n=1}^{N-1} h \|m_h^{n+1}\|^2 \le 2 \tau  \sum_{n=1}^{N-1} h \|\pi_h m_h^{n+1}-m_h^{n+1}\|^2+2 \tau \sum_{n=1}^{N-1} h \|\pi_h m_h^{n+1}\|^2.
\end{equation*}
Using \eqref{interpinq2} we have 
\begin{alignat*}{1}
\tau \sum_{n=1}^{N-1} h \|\pi_h m_h^{n+1}-m_h^{n+1}\|^2 \leq &
C\,(\|\nabla \beta\|_{\infty}^2 h \tau \sum_{n=1}^{N-1}\|e_h^{n+1}\|^2+  \tau  \|\beta\|_{\infty} \sum_{n=1}^{N-1} |e_h^{n+1}|_s^2) \\
\le & C \|\nabla \beta\|_{\infty}^2 h \tau \sum_{n=1}^{N-1}\|e_h^{n+1}\|^2+ C \|\beta\|_{\infty}  |||e_h|||^2.
\end{alignat*}

Applying \eqref{mainerror} we get
\begin{alignat*}{1}
\tau \sum_{n=1}^{N-1}\|e_h^{n+1}\|^2 \le  &      CT(h^{2(p+1)} (\|u(t_0)\|_{H^2(\Omega)}^2+\|u(t_1)\|_{H^2(\Omega)}^2)+ \mathsf{G}) \mathsf{M}
\end{alignat*}
Therefore, also using \eqref{ehtriple} we obtain
\begin{alignat*}{1}
&\tau \sum_{n=1}^{N-1} h \|\pi_h m_h^{n+1}-m_h^{n+1}\|^2    \\
&\le   C\Big(T h\|\nabla \beta\|_{\infty}^2  + \|\beta\|_{\infty}\big(1+T (\tau \|\nabla \beta\|_{\infty}^2+1)\big) \Big)\Big((h^{2(p+1)}(\|u(t_0)\|_{H^2(\Omega)}^2+\|u(t_1)\|_{H^2(\Omega)}^2)+ \mathsf{G}\Big) \mathsf{M}
\end{alignat*}
Thus, combining these inequalities with \eqref{cormaterial1} gives the result. 
\end{proof}
\section{Crank Nicolson IMEX scheme}\label{sec:crank}
In this section we will define the Crank Nicolson-IMEX method and prove that it is stable. 

The Crank Nicolson-IMEX method will read:
Find $u_h^{n+1} \in \mVh$ such that for $n \ge 1$,
\begin{equation}\label{CNscheme}
( \delta u_h^{n+1},v_h)_\Omega +\tau c_h(\hat{u}_h^{n+1}, v_h)+\tau a(\overline{u}_h^{n+1}, v_h) 
= \tau L^{n+1}(v_h),\quad \forall v_h \in \mVh,
\end{equation}
where $u_h^0$  and $u_h^1$ are given.  Here we use the notation
\begin{alignat*}{1}
\hat{u}_h^{n+1}:= & \frac{3}{2} u_h^{n}-\frac{1}{2} u_h^{n-1} =\frac{\tilde{u}_h^{n+1}+ u_h^{n}}{2}. \\
\overline{u}_h^{n+1}:=& \frac{u_h^{n+1}+ u_h^{n}}{2}.
\end{alignat*}

\flushleft
We see that
\begin{equation}\label{814}
\hat{u}_h^{n+1}=\overline{u}_h^{n+1}-\frac{1}{2}\delta \delta u_h^{n+1}.
\end{equation}
\flushleft
Here we define the triple norm as:
\[
|||v|||^2 := \sum_{n=1}^{N-1} \tau E(v^{n+1})^2 .
\]
\flushleft
In order to prove a stability result for the Crank-Nicoloson IMEX method we need to we need two different bounds for $\sum_{n=1}^{N-1}\|\delta \delta u_h^{n+1}\|^2$. One for the $\Pe >1$ and one for $\Pe \le 1$.  The first is as follows. 
\begin{lemma}\label{lemma818}
 Let $u_h$ solve \eqref{CNscheme}. If $\Pe>1$ and $\Co$ is sufficiently small,  then the following estimate holds
\begin{alignat}{1}
\sum_{n=1}^{N-1}\|\delta \delta u_h^{n+1}\|^2 \le   C \Co \sum_{n=1}^{N-1} \| \delta u_h^{n+1}-P_0(\delta u_h^{n+1})\|^2+C \tau  \sum_{n=1}^{N-1}  \|L^{n+1}\|_h^2+  C \Co ||| \overline{u}_h^{n+1}|||^2. \label{lemma818-1}
\end{alignat}
Moreover,  if $\Pe>1$  and $\max\{\Co_{4/3}, \Co\} $ is sufficiently small then we have the following estimate
\begin{alignat}{1}
\sum_{n=1}^{N-1}\|\delta \delta u_h^{n+1}\|^2 \le   C \tau  \sum_{n=1}^{N-1}  \|L^{n+1}\|_h^2+  C\Co ||| \overline{u}_h^{n+1}|||^2+\tau \sum_{n=1}^{N-1} \|u_h^{n+1}\|^2. \label{lemma818-2}
\end{alignat}
\end{lemma}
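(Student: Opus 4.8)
The plan is to derive both bounds from a single identity obtained by \emph{differencing} the scheme in time. Subtracting \eqref{CNscheme} written at step $n-1$ from the same equation at step $n$ and testing the result with $v_h = \delta\delta u_h^{n+1}$ gives, for $2 \le n \le N-1$,
\begin{equation*}
\|\delta\delta u_h^{n+1}\|^2 + \tau c_h(\delta\hat u_h^{n+1}, \delta\delta u_h^{n+1}) + \tau a(\delta\overline u_h^{n+1}, \delta\delta u_h^{n+1}) = \tau\, \delta L^{n+1}(\delta\delta u_h^{n+1}),
\end{equation*}
where $\delta L^{n+1} = L^{n+1} - L^n$. The advantage of testing with $\delta\delta u_h^{n+1}$ \emph{after} differencing is that the mass term is exactly $\|\delta\delta u_h^{n+1}\|^2$, so no summation by parts and no telescoping boundary terms are produced. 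Differencing \eqref{814} yields $\delta\hat u_h^{n+1} = \delta\overline u_h^{n+1} - \tfrac12\delta\delta\delta u_h^{n+1}$, and a direct computation gives $\delta\overline u_h^{n+1} = \tfrac12(\delta u_h^{n+1} + \delta u_h^n)$; I will use both identities to rewrite the convective and stabilisation arguments. The term at $n=1$, namely $\|\delta\delta u_h^2\|^2$, is not covered by differencing, so I will dispose of it separately by testing \eqref{CNscheme} at $n=1$ with $\delta\delta u_h^2$; it contributes only first-level quantities already present on the right-hand side.

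The diffusion, stabilisation and forcing contributions are common to both estimates. For the diffusion I bound $\tau a(\delta\overline u_h^{n+1}, \delta\delta u_h^{n+1})$ by Cauchy--Schwarz and an inverse inequality, converting the factor $\tau\mu^{1/2}/h$ into $\tau^{1/2}\Co^{1/2}$ using $\Pe>1$; since $\|\mu^{1/2}\nabla\delta\overline u_h^{n+1}\| \le E(\overline u_h^{n+1}) + E(\overline u_h^{n})$, summation and Young's inequality give $\epsilon\sum\|\delta\delta u_h^{n+1}\|^2 + C\Co\,|||\overline u_h|||^2$. The part $\gamma s(\delta\overline u_h^{n+1}, \delta\delta u_h^{n+1})$ of $c_h$ is treated the same way via \eqref{eq:s_cont} and $\gamma^{1/2}|\cdot|_s \le E(\cdot)$, again producing $C\Co\,|||\overline u_h|||^2$. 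The two residual third-difference pieces $-\tfrac12 c(\delta\delta\delta u_h^{n+1}, \delta\delta u_h^{n+1})$ and $-\tfrac{\gamma}{2} s(\delta\delta\delta u_h^{n+1}, \delta\delta u_h^{n+1})$ are handled by \eqref{eq:c_cont}/\eqref{eq:s_cont} together with $\|\delta\delta\delta u_h^{n+1}\| \le \|\delta\delta u_h^{n+1}\| + \|\delta\delta u_h^n\|$, so that for $\Co$ small they are absorbed into $\sum\|\delta\delta u_h^{n+1}\|^2$ on the left. Finally $\tau\, \delta L^{n+1}(\delta\delta u_h^{n+1}) \le C\tau^{1/2}(\|L^{n+1}\|_h + \|L^n\|_h)\|\delta\delta u_h^{n+1}\|$, where \eqref{eq:Ebound} is used to bound $\sqrt{E(\delta\delta u_h^{n+1})^2 + \|\delta\delta u_h^{n+1}\|^2}$ by $C\tau^{-1/2}\|\delta\delta u_h^{n+1}\|$; this gives the $C\tau\sum\|L^{n+1}\|_h^2$ term.

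The genuinely different, and hardest, step is the convective form $\tau c(\delta\overline u_h^{n+1}, \delta\delta u_h^{n+1})$. For the first estimate I exploit that $\nabla$ annihilates piecewise constants, so $c(\delta\overline u_h^{n+1}, \delta\delta u_h^{n+1}) = c(\delta\overline u_h^{n+1} - P_0\delta\overline u_h^{n+1}, \delta\delta u_h^{n+1})$; an elementwise version of \eqref{eq:c_cont} then gives $\tau c(\cdots) \le C\Co\,\|\delta\overline u_h^{n+1} - P_0\delta\overline u_h^{n+1}\|\,\|\delta\delta u_h^{n+1}\|$, and since $\delta\overline u_h^{n+1} - P_0\delta\overline u_h^{n+1} = \tfrac12[(\delta u_h^{n+1} - P_0\delta u_h^{n+1}) + (\delta u_h^{n} - P_0\delta u_h^{n})]$ this is precisely the wanted $C\Co\sum\|\delta u_h^{n+1} - P_0(\delta u_h^{n+1})\|^2$ (after Young and $\Co^2 \le \Co$). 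For the second estimate I instead avoid the projection term by the $C_h$/extrapolation mechanism of Theorem \ref{thm:stab2}: writing $c(\delta\overline u_h^{n+1}, \delta\delta u_h^{n+1}) = -(\psi_h, \delta\overline u_h^{n+1})_\Omega$ with $\psi_h = C_h\delta\delta u_h^{n+1}$, I replace $\delta\overline u_h^{n+1} = \tfrac12(\delta u_h^{n+1} + \delta u_h^n)$ by its expression from \eqref{CNscheme}, so that each $\delta u_h^{k}$ becomes explicit convection, diffusion and forcing of $\hat u_h^{k},\overline u_h^{k}$ tested against $\psi_h$. Bounding these with \eqref{eq:43cont} and \eqref{ChCo} and invoking the $4/3$-CFL controls them by $\epsilon\sum\|\delta\delta u_h^{n+1}\|^2 + C\Co\,|||\overline u_h|||^2 + C\tau\sum\|L^{n+1}\|_h^2 + \tau\sum\|u_h^{n+1}\|^2$, which is the right-hand side of the second estimate.

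I expect the convective term to be the main obstacle: one must route the projection/CFL bookkeeping so that the bound degenerates correctly as $\mu\to 0$ and no powers of $\Pe$ resurface, which is exactly why $P_0$-orthogonality is used in the first bound and the $C_h$ substitution through the scheme in the second. A secondary, purely technical, point is the careful treatment of the $n=1$ term and of the third-difference remainders, both of which must be shown not to spoil the clean right-hand sides.
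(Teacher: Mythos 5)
Your proposal is correct and follows essentially the same route as the paper: difference the scheme in time, test with $\delta\delta u_h^{n+1}$, use the elementwise $P_0$-orthogonality plus an inverse estimate for the convection in the first bound, and the $C_h$ substitution back through the scheme with \eqref{eq:43cont} and \eqref{ChCo} for the second, with identical treatment of the diffusion, stabilisation and forcing terms. The only (cosmetic) differences are that you split $\delta\hat u_h^{n+1}=\delta\overline u_h^{n+1}-\tfrac12\delta\delta\delta u_h^{n+1}$ before applying the projection/$C_h$ machinery rather than working with $\delta\hat u_h^{n+1}$ directly, and that you are more explicit than the paper about the separate handling of the $n=1$ term.
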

\begin{proof}
We see from \eqref{CNscheme} that
\begin{equation*}
( \delta \delta u_h^{n+1},v_h)_\Omega +\tau c_h(\delta \hat{u}_h^{n+1}, v_h)+\tau a(\delta \overline{u}_h^{n+1}, v_h) 
= \tau \delta L^{n+1}(v_h),\quad \forall v_h \in \mVh,
\end{equation*}
Therefore, we easily have 
\begin{alignat*}{1}
\sum_{n=1}^{N-1}\|\delta \delta u_h^{n+1}\|^2= S_1+S_2+S_3+S_4,
\end{alignat*}
where 
\begin{subequations}\label{S14}
\begin{alignat}{1}
S_1=&-\tau \sum_{n=1}^{N-1}  c(\delta \hat{u}_h^{n+1}, \delta \delta u_h^{n+1})  \\
S_2 =& -\tau \sum_{n=1}^{N-1}   \gamma s(\delta \hat{u}_h^{n+1}, \delta \delta u_h^{n+1}) \\
S_3 = &-\tau \sum_{n=1}^{N-1}  a(\delta \overline{u}_h^{n+1}, \delta \delta u_h^{n+1}) \\
S_4 = & \tau \sum_{n=1}^{N-1}  \delta  L^{n+1}( \delta \delta u_h^{n+1}).
\end{alignat}
\end{subequations}
We start with an estimate of $S_2$.  Using \eqref{814} and inverse estimates followed by Young's inequality we get:
\begin{alignat*}{1}
S_2 =& -\tau \sum_{n=1}^{N-1}   \gamma s(\delta \overline{u}_h^{n+1}, \delta \delta u_h^{n+1})+  \frac{\tau}{2} \sum_{n=1}^{N-1}   \gamma s(\delta \delta \delta u_h^{n+1}, \delta \delta u_h^{n+1}) \\
\le &  (\gamma C \Co+ \frac{1}{32}) \sum_{n=1}^{N-1}\|\delta \delta u_h^{n+1}\|^2+ \Co \tau  \sum_{n=1}^{N-1} \gamma |\overline{u}_h^{n+1}|_s^2.
\end{alignat*}
Similarly, we can show that
\begin{alignat*}{1}
S_3 \le    \frac{1}{32}  \sum_{n=1}^{N-1}\|\delta \delta u_h^{n+1}\|^2+  C \frac{\Co}{\Pe} \tau  \sum_{n=1}^{N-1} \|\sqrt{\mu} \nabla(\overline{u}_h^{n+1})\|^2.
\end{alignat*}
Hence, by our assumption that $\Pe >1$ we get that 
\begin{alignat*}{1}
S_2+S_3 \le  (C \sqrt{\Co}+\frac{1}{16})  \sum_{n=1}^{N-1}\|\delta \delta u_h^{n+1}\|^2+  C\Co ||| \overline{u}_h^{n+1}|||^2. 
\end{alignat*}

We can easily obtain using \eqref{preEbound1} and \eqref{preEbound2}
\begin{alignat*}{1}
S_4 \le    \frac{1}{8} \sum_{n=1}^{N-1}\|\delta \delta u_h^{n+1}\|^2+ C \tau  ( \frac{\sqrt{\Co}}{\sqrt{\Pe}}+ \sqrt{\gamma \Co} +\sqrt{\tau})^2 \sum_{n=1}^{N-1}  \|L^{n+1}\|_h^2.
\end{alignat*}

We now prove the estimate  \eqref{lemma818-1}. We easily can show that using an inverse estimate that
\begin{alignat*}{1}
S_1 \le  \frac{1}{8} \sum_{n=1}^{N-1}\|\delta \delta u_h^{n+1}\|^2+  C \Co \sum_{n=1}^{N-1} \| \delta u_h^{n+1}-P_0(\delta u_h^{n+1})\|^2.
\end{alignat*}

Thus, using that $\Pe >1$ and the fact that $\Co$ is sufficiently small we obtain:
\begin{alignat*}{1}
\sum_{n=1}^{N-1}\|\delta \delta u_h^{n+1}\|^2 \le   C \Co \sum_{n=1}^{N-1} \| \delta u_h^{n+1}-P_0(\delta u_h^{n+1})\|^2+C \tau  \sum_{n=1}^{N-1}  \|L^{n+1}\|_h^2+  C \sqrt{\Co} ||| \overline{u}_h^{n+1}|||^2. 
\end{alignat*}
This proves \eqref{lemma818-1}. 

To prove  \eqref{lemma818-2} we write $S_1$.
\begin{alignat*}{1}
S_1=&\tau \sum_{n=1}^{N-1}  c( \delta \delta u_h^{n+1},\delta \hat{u}_h^{n+1})=\tau \sum_{n=1}^{N-1}  ( \psi_h^{n+1},\delta \hat{u}_h^{n+1})_{\Omega}=\frac{3\tau}{2} \sum_{n=1}^{N-1}  ( \psi_h^{n+1},\delta u_h^{n})_{\Omega}- \frac{\tau}{2} \sum_{n=1}^{N-1}  ( \psi_h^{n+1},\delta u_h^{n-1})_{\Omega},
\end{alignat*}
where $\psi_h^n= C_h (\delta \delta u_h^{n})$.

We estimate  $( \psi_h^{n+1},\delta u_h^{n})_{\Omega}$.  Using \eqref{CNscheme} we write
\begin{equation*}
\tau  ( \psi_h^{n+1},\delta u_h^{n})_{\Omega}=M_1+M_2+M_3+M_4.
\end{equation*}
where 
\begin{alignat*}{1}
M_1:=& -\tau^2 c(\hat{u}_h^{n}, \psi_h^{n+1}), \\
M_2:= &- \tau^2\gamma s(\hat{u}_h^{n}, \psi_h^{n+1}), \\
M_3:=&-\tau^2 a(\overline{u}_h^{n}, \psi_h^{n+1}) ,\\
M_4:=&  \tau^2 L^{n}(\psi_h^{n+1}).
\end{alignat*}
Using inverse estimates and \eqref{eq:43cont} we obtain
\begin{alignat*}{1}
M_1 \le &     C \sqrt{\tau} (\Co_{4/3})^{3/2} \|\hat{u}_h^{n}\| \|\delta \delta u_h^{n+1}\| \\
\le& C (\Co_{4/3})^{3/2}   \tau \|\hat{u}_h^{n}\|^2+  C (\Co_{4/3})^{3/2}  \|\delta \delta u_h^{n+1}\|^2.
\end{alignat*}

To estimate $M_2$, we use inverse estimates and \eqref{ChCo} to obtain
\begin{alignat*}{1}
M_2= &- \tau^2\gamma s(\overline{u}_h^{n}, \psi_h^{n+1})+  \frac{\tau^2}{2} \gamma s(\delta \delta u_h^{n}, \psi_h^{n+1})\\
\le & C \gamma \Co^2 (\|\delta \delta u_h^{n}\|^2 +\|\delta \delta u_h^{n+1}\|^2)+ \tau \Co^2 \gamma |\overline{u}_h^{n}|_s^2. 
\end{alignat*}
Similarly, we get 
\begin{alignat*}{1}
M_3 \le & C  \frac{\Co}{\sqrt{\Pe}}  \|\delta \delta u_h^{n+1}\|^2+ \tau \frac{\Co}{\sqrt{\Pe}} \|\sqrt{\mu} \nabla (\overline{u}_h^{n})\|^2. 
\end{alignat*}
Finally, again using   inverse estimates and \eqref{ChCo} to obtain
\begin{alignat*}{1}
M_4 \le & C  (\frac{\Co^3}{\Pe}+ \gamma^2 \Co^3+\Co^2 )  \|\delta \delta u_h^{n+1}\|^2+ \frac{\tau}{2} \|L^n\|_h^2.
\end{alignat*}
Hence, using that $\Pe >1$ and that we can take $\Co \le 1$ gives 
\begin{alignat*}{1}
\frac{3\tau}{2} \sum_{n=1}^{N-1}  ( \psi_h^{n+1},\delta u_h^{n})_{\Omega} \le &   C (\Co_{4/3})^{3/2}+ \Co+ \Co^3 \gamma^2) \sum_{n=1}^{N-1}    \|\delta \delta u_h^{n+1}\|^2+ \Co^2 ||| \overline{u}_h|||^2  \\
&   + \frac{\tau}{2} \sum_{n=1}^{N-1}\|L^{n+1}\|_h^2    +  C (\Co_{4/3})^{3/2}   \tau  \sum_{n=1}^{N-1}\|u^{n+1}\|^2.
\end{alignat*}
Similarly, we can prove the same estimate for $- \frac{\tau}{2} \sum_{n=1}^{N-1}  ( \psi_h^{n+1},\delta u_h^{n-1})_{\Omega}$ and so we get
\begin{alignat*}{1}
S_1 \le &  C \big (\Co_{4/3})^{3/2}+ \Co+ \Co^3 \gamma^2\big) \sum_{n=1}^{N-1}    \|\delta \delta u_h^{n+1}\|^2+ \Co^2 ||| \overline{u}_h|||^2  \\
&   + \tau \sum_{n=1}^{N-1}\|L^{n+1}\|_h^2 +  C (\Co_{4/3})^{3/2}   \tau  \sum_{n=1}^{N-1} \|u^{n+1}\|^2.
\end{alignat*}
If we combine the estimates of $S_i's$ and take $\max\{\Co_{4/3}, \Co\}$ we obtain \eqref{lemma818-2}. 

\end{proof}

The following alternative estimate will be useful when $\Pe \le 1$. 
\begin{lemma}\label{lemma819}
Assume  that $\tau \le 1$ and $\Co$ is sufficiently small.   Let $u_h$ solve \eqref{CNscheme} then the following estimate holds
\begin{alignat*}{1}
\sum_{n=1}^{N-1}\|\delta \delta u_h^{n+1}\|^2 \le&  -\tau (\|\sqrt{\mu} \nabla(\delta u_h^{N})\|^2-\|\sqrt{\mu} \nabla(\delta u_h^{1})\|^2) + 2  \sum_{n=0}^{N-1}  \|\delta L^{n+1}\|_h^2  \\
 &+C \tau \Co  (\Pe+1)  \sum_{n=1}^{N-1} E( \overline{u}_h^{n+1})^2+  \tau^2 \sum_{n=1}^{N-1} \|\sqrt{\mu} \nabla (\delta u_h^{n+1})\|^2.
\end{alignat*}
\end{lemma}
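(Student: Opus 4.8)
The plan is to differentiate the scheme in time, test against $\delta\delta u_h^{n+1}$, and sum, so that the four source terms can be isolated one at a time. Applying the increment operator $\delta$ to \eqref{CNscheme} gives, for all $v_h\in\mVh$,
\[
(\delta\delta u_h^{n+1},v_h)_\Omega+\tau c_h(\delta\hat u_h^{n+1},v_h)+\tau a(\delta\overline u_h^{n+1},v_h)=\tau\,\delta L^{n+1}(v_h),
\]
which is the identity opening the proof of Lemma \ref{lemma818}. Taking $v_h=\delta\delta u_h^{n+1}$ and summing over $n$ gives $\sum_{n=1}^{N-1}\|\delta\delta u_h^{n+1}\|^2=S_1+S_2+S_3+S_4$, with the pieces \eqref{S14}. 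The whole point of this alternative estimate is that, in contrast to Lemma \ref{lemma818}, I do not discard the diffusion: in the regime where this bound will be used the symmetric form $a$ supplies the dissipation, so I extract a genuine energy contribution from $S_3$ rather than throwing it away.

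The decisive step is the treatment of $S_3=-\tau\sum_n a(\delta\overline u_h^{n+1},\delta\delta u_h^{n+1})$. Using the symmetry of $a$ together with the polarization $\delta\overline u_h^{n+1}=\tfrac12(\delta u_h^{n+1}+\delta u_h^n)$ and $\delta\delta u_h^{n+1}=\delta u_h^{n+1}-\delta u_h^n$ (equivalently the summation-by-parts formula \eqref{rbi1} applied to $\{\delta u_h^n\}$), the sum telescopes into the negative boundary dissipation $-\tau(\|\sqrt\mu\nabla\delta u_h^{N}\|^2-\|\sqrt\mu\nabla\delta u_h^{1}\|^2)$, the first term on the right of the claim, up to a higher-order remainder that I retain on the right as $\tau^2\sum_n\|\sqrt\mu\nabla\delta u_h^{n+1}\|^2$. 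This is essentially the only place where the Crank--Nicolson structure and the symmetry of $a$ are genuinely exploited, and keeping the correct sign of this gradient dissipation is essential, since it is what will survive to the subsequent stability argument.

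The remaining three terms follow the template already set in Lemma \ref{lemma818}. For the data term $S_4=\tau\sum_n\delta L^{n+1}(\delta\delta u_h^{n+1})$ I bound $\delta L^{n+1}(\delta\delta u_h^{n+1})$ by $\|\delta L^{n+1}\|_h\,(E(\delta\delta u_h^{n+1})^2+\|\delta\delta u_h^{n+1}\|^2)^{1/2}$, use \eqref{eq:Ebound} to convert $\tau\big(E(\delta\delta u_h^{n+1})^2+\|\delta\delta u_h^{n+1}\|^2\big)$ into $\|\delta\delta u_h^{n+1}\|^2$ up to a factor bounded under the hypotheses $\tau\le1$ and $\Co$ small, and apply Young's inequality to leave $\tfrac18\sum_n\|\delta\delta u_h^{n+1}\|^2+2\sum_n\|\delta L^{n+1}\|_h^2$. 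For the stabilisation $S_2$ I split $\delta\hat u_h^{n+1}=\delta\overline u_h^{n+1}-\tfrac12\delta\delta\delta u_h^{n+1}$ via \eqref{814}, bound using \eqref{eq:s_cont} and an inverse estimate, absorb the $\gamma\Co$-weighted square of $\delta\delta u_h^{n+1}$, and am left with $C\Co\,|||\overline u_h|||^2$, which fits inside $C\tau\Co(\Pe+1)\sum_n E(\overline u_h^{n+1})^2$ since $\gamma|\overline u_h^{n+1}|_s^2\le E(\overline u_h^{n+1})^2$.

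The main obstacle is the convection term $S_1=-\tau\sum_n c(\delta\hat u_h^{n+1},\delta\delta u_h^{n+1})$, which must be controlled using only the \emph{standard} Courant number $\Co$ (this is what distinguishes the present bound from the $4/3$-CFL estimates). After the same split, the $\delta\delta\delta u_h^{n+1}$-contribution is disposed of by skew-symmetry and an inverse inequality, giving an $O(\Co)$ multiple of $\sum_n\|\delta\delta u_h^{n+1}\|^2$. For the principal part $-\tau\sum_n c(\delta\overline u_h^{n+1},\delta\delta u_h^{n+1})$ I would move the gradient onto $\overline u_h^{n+1}$ and $\overline u_h^{n}$ and rewrite $\|\beta\|_\infty\|\nabla\cdot\|=\sqrt{\Pe}\,\sqrt{\|\beta\|_\infty/h}\,\|\sqrt\mu\nabla\cdot\|$, which is exactly where the factor $(\Pe+1)$ is born. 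Since the convective product carries only a single factor $\sqrt\Co$, a naive Young split would leave $\tau\Pe\,E(\overline u_h^{n+1})^2$; the remedy is a \emph{weighted} Young inequality placing a small weight on $\|\delta\delta u_h^{n+1}\|^2$, which keeps that term absorbable and produces $C\tau\Co(\Pe+1)E(\overline u_h^{n+1})^2$ at the price of a large constant $C$ — harmless, since $C$ is unspecified in the claim. Collecting $S_1,\dots,S_4$, moving the absorbable multiples of $\sum_n\|\delta\delta u_h^{n+1}\|^2$ to the left, and choosing $\Co$ small so that their total coefficient is below $1$, leaves exactly the four terms on the right of the claim.
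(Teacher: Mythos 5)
Your overall architecture coincides with the paper's: difference the scheme, test with $\delta\delta u_h^{n+1}$, decompose into the four sums $S_1,\dots,S_4$ of \eqref{S14}, telescope $S_3$ using the symmetry of $a$, and control $S_1$ with a weighted Young inequality producing the factor $\tau\Co\Pe$ in front of the gradient energy. However, your treatment of $S_4$ contains a genuine gap. You propose to convert $\tau\big(E(\delta\delta u_h^{n+1})^2+\|\delta\delta u_h^{n+1}\|^2\big)$ into $C\|\delta\delta u_h^{n+1}\|^2$ via \eqref{eq:Ebound}, claiming the resulting factor is bounded under the hypotheses $\tau\le 1$ and $\Co$ small. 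But \eqref{eq:Ebound} gives the factor $C(\gamma\Co+\Co/\Pe)$, and $\Co/\Pe\sim\tau\mu/h^2$ is the \emph{parabolic} Courant number, which is not assumed bounded --- the lemma is invoked precisely in the diffusion-dominated case $\Pe\le 1$ of Theorem \ref{thm:stabCN}, where avoiding the condition $\tau\mu/h^2\lesssim 1$ is the entire point of treating the diffusion implicitly. The diffusive part of $E(\delta\delta u_h^{n+1})^2$ arising in the $S_4$ bound cannot be absorbed into $\sum_n\|\delta\delta u_h^{n+1}\|^2$; it must instead be estimated by the triangle inequality as $\tau^2\sum_n\|\sqrt{\mu}\nabla(\delta\delta u_h^{n+1})\|^2\le C\tau^2\sum_n\|\sqrt{\mu}\nabla(\delta u_h^{n+1})\|^2$, which is exactly the fourth term on the right of the claim. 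Only the stabilization part of $E(\delta\delta u_h^{n+1})^2$ may be absorbed by an inverse estimate, at cost $C\Co\tau$.

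Relatedly, you attribute the term $\tau^2\sum_n\|\sqrt{\mu}\nabla(\delta u_h^{n+1})\|^2$ to a ``higher-order remainder'' of the telescoping of $S_3$. There is no such remainder: since $\delta\overline u_h^{n+1}=\frac12(\delta u_h^{n+1}+\delta u_h^{n})$ and $\delta\delta u_h^{n+1}=\delta u_h^{n+1}-\delta u_h^{n}$, the symmetry of $a$ gives $a(\delta\overline u_h^{n+1},\delta\delta u_h^{n+1})=\frac12\big(a(\delta u_h^{n+1},\delta u_h^{n+1})-a(\delta u_h^{n},\delta u_h^{n})\big)$ exactly, so $S_3$ equals the telescoped boundary terms with no residual. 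Adding a spurious non-negative term to the right-hand side is harmless in itself, but it hides the fact that this term is genuinely needed elsewhere, namely to receive the diffusive contribution of $S_4$ as described above. With that rerouting your argument matches the paper's proof.
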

\begin{proof}
\begin{alignat*}{1}
\sum_{n=1}^{N-1}\|\delta \delta u_h^{n+1}\|^2= S_1+S_2+S_3+S_4,
\end{alignat*}
where  $S_i's$ are given in \eqref{S14}. 

We first notice that   $a(\delta \overline{u}_h^{n+1}, \delta \delta u_h^{n+1})= \frac{1}{2} \big(a(\delta u_h^{n+1}, \delta u_h^{n+1})-a(\delta u_h^{n}, \delta u_h^{n})\big)$. Hence, 
\begin{equation*}
S_3= -\frac{\tau}{2} (\|\sqrt{\mu} \nabla(\delta u_h^{N})\|^2-\|\sqrt{\mu} \nabla(\delta u_h^{1})\|^2).
\end{equation*}
Using \eqref{814} we obtain
\begin{alignat*}{1}
S_1 = -\tau \sum_{n=1}^{N-1}  c(\delta \overline{u}_h^{n+1}, \delta \delta u_h^{n+1})  +\frac{\tau}{2} \sum_{n=1}^{N-1}  c(\delta (\delta \delta u_h^{n+1}), \delta \delta u_h^{n+1}).
\end{alignat*}
We then see that
\begin{alignat*}{1}
-\tau \sum_{n=1}^{N-1}  c(\delta \overline{u}_h^{n+1}, \delta \delta u_h^{n+1}) \le&  \frac{C  \tau \|\beta\|_{\infty}}{\sqrt{\mu}}  \sum_{n=1}^{N-1} \|\sqrt{\mu} \nabla(\delta \overline{u}_h^{n+1})\| \| \delta \delta u_h^{n+1}\| \\
\le & C \tau \Co \Pe \sum_{n=1}^{N-1} \|\sqrt{\mu} \nabla(\delta \overline{u}_h^{n+1})\|^2+ \frac{1}{16}\sum_{n=1}^{N-1}\|\delta \delta u_h^{n+1}\|^2.
\end{alignat*}
Similarly, now using inverse estimates, we get 
\begin{alignat*}{1}
\frac{\tau}{2} \sum_{n=1}^{N-1}  c(\delta (\delta \delta u_h^{n+1}), \delta \delta u_h^{n+1}) \le &  C \Co \sum_{n=1}^{N-1}\|\delta (\delta \delta u_h^{n+1})\| \|\delta \delta u_h^{n+1}\| \\
\le &  C \Co \sum_{n=1}^{N-1}\|\delta \delta u_h^{n+1}\|^2.
\end{alignat*}

Hence, we have shown that 
\begin{alignat*}{1}
S_1 \le   C \tau \Co \Pe \sum_{n=1}^{N-1} \|\sqrt{\mu} \nabla(\delta \overline{u}_h^{n+1})\|^2+ (\frac{1}{16}+ C \Co) \sum_{n=1}^{N-1}\|\delta \delta u_h^{n+1}\|^2.
\end{alignat*}
In a very similar fashion we can  prove that 
\begin{alignat*}{1}
S_2 \le   C \tau \Co  \sum_{n=1}^{N-1} \gamma | \delta \overline{u}_h^{n+1}|_s^2+ (\frac{1}{16}+ C \gamma \Co) \sum_{n=1}^{N-1}\|\delta \delta u_h^{n+1}\|^2.
\end{alignat*}
Therefore, 

\begin{alignat*}{1}
S_1+S_2 \le   (\frac{1}{8}+ C (\gamma+1) \Co) \sum_{n=1}^{N-1} \|\delta \delta u_h^{n+1}\|^2+ C \tau \Co  (\Pe+1)  \sum_{n=1}^{N-1} E( \overline{u}_h^{n+1})^2. 
\end{alignat*}
Here we used that  $\sum_{n=1}^{N-1} E( \delta \overline{u}_h^{n+1})^2  \le C \sum_{n=1}^{N-1} E( \overline{u}_h^{n+1})^2$. 

To bound $S_4$ we use the definition of the operator norm.
\begin{alignat*}{1}
S_4 \le  &\tau \sum_{n=1}^{N-1} \sqrt{E(\delta \delta u_h^{n+1})^2+ \|\delta \delta u_h^{n+1}\|^2} \, \|\delta L^{n+1}\|_h  \\
    \le & \sum_{n=1}^{N-1}  \|\delta L^{n+1}\|_h^2+  \frac{\tau^2}{4} \sum_{n=1}^{N-1} E(\delta \delta u_h^{n+1})^2+ \frac{\tau^2}{4}  \sum_{n=1}^{N-1} \|\delta \delta u_h^{n+1}\|^2. 
\end{alignat*}
We can bound the energy as follows.
\begin{alignat*}{1}
 \frac{\tau^2}{4} \sum_{n=1}^{N-1} E(\delta \delta u_h^{n+1})^2 =  & \frac{\tau^2}{4} \sum_{n=1}^{N-1} \|\sqrt{\mu} \nabla (\delta \delta u_h^{n+1})\|^2+   \frac{\tau^2}{4} \sum_{n=1}^{N-1} \gamma | \delta \delta u_h^{n+1}|_s^2 \\
 \le &   \tau^2 \sum_{n=1}^{N-1} \|\sqrt{\mu} \nabla (\delta u_h^{n+1})\|^2+  C \Co \tau   \sum_{n=1}^{N-1}  \|\delta \delta u_h^{n+1}\|^2. 
 \end{alignat*}
 Hence, 
\begin{alignat*}{1}
S_4 \le  \sum_{n=1}^{N-1}  \|\delta L^{n+1}\|_h^2 + \tau^2 \sum_{n=1}^{N-1} \|\sqrt{\mu} \nabla (\delta u_h^{n+1})\|^2+ (\frac{\tau^2}{4}+ C \Co \tau) \sum_{n=1}^{N-1} \|\delta \delta u_h^{n+1}\|^2.
\end{alignat*}
 We arrive at 
 
 \begin{alignat*}{1}
\sum_{n=1}^{N-1}\|\delta \delta u_h^{n+1}\|^2 \le&  -\frac{\tau}{2} (\|\sqrt{\mu} \nabla(\delta u_h^{N})\|^2-\|\sqrt{\mu} \nabla(\delta u_h^{2})\|^2) +  \sum_{n=1}^{N-1}  \|\delta L^{n+1}\|_h^2  \\
 &+C \tau \Co  (\Pe+1)  \sum_{n=1}^{N-1} E( \overline{u}_h^{n+1})^2+  \tau^2 \sum_{n=1}^{N-1} \|\sqrt{\mu} \nabla (\delta u_h^{n+1})\|^2 \\ 
 &+(\frac{\tau^2}{4}+\frac{1}{8}+ C \Co(1+ \tau)) \sum_{n=1}^{N-1} \|\delta \delta u_h^{n+1}\|^2.
\end{alignat*}
The result follows by taking $\Co$ sufficiently small so that $(\frac{\tau^2}{4}+\frac{1}{8}+ C \Co(1+ \tau)) \le \frac{1}{2}$. 
\end{proof}

We will need an auxiliarly lemma in the case $\Pe>1$.
In the case $p=1$ (and $\Pe >1$) we will need an auxiliarly result. 
\begin{lemma}\label{lemma817}
Let $u_h$ solve \eqref{CNscheme}. 

If $p=1$ and $\gamma>0$ then the following estimate holds
\begin{alignat}{1}
\|\delta u_h^{n+1}-P_0(\delta u_h^{n+1})\| \le&  C \sqrt{ \tau\Co} (\frac{1}{\sqrt{\Pe}}+ \sqrt{\gamma}+ \frac{1}{\sqrt{\gamma}}) E(\overline{u}_h^{n+1}) +  C \gamma \Co \| \delta \delta u_h^{n+1}\|  \nonumber \\
&+  C \tau \|\nabla \beta\|_{\infty} \|\hat u_h^{n+1}\|+ C\sqrt{\tau} ( \frac{\sqrt{\Co}}{\sqrt{\Pe}}+ \sqrt{\gamma \Co} +\sqrt{\tau}) \|L^{n+1}\|_h.\label{lemma817-1}
\end{alignat}

If  $p \ge 1$
\begin{alignat}{1}
\|\delta u_h^{n+1}\| \le&  C \sqrt{ \tau\Co} (\frac{1}{\sqrt{\Pe}}+ \sqrt{\gamma}+ \frac{1}{\sqrt{\gamma}}) E(\overline{u}_h^{n+1}) +  C \gamma \Co \| \delta \delta u_h^{n+1}\| \nonumber \\
&+ C\sqrt{\tau} ( \frac{\sqrt{\Co}}{\sqrt{\Pe}}+ \sqrt{\gamma \Co} +\sqrt{\tau}) \|L^{n+1}\|_h + C \|\beta\|_{\infty} \frac{\tau}{h} \| \hat u_h^{n+1}\|. \label{lemma817-2}
\end{alignat}

\end{lemma}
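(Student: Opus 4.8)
The plan is to prove both estimates by the same device used earlier for the BDF2 bound \eqref{DtauU}: test the Crank--Nicolson scheme \eqref{CNscheme} with a carefully chosen function in $\mVh$, replace the time-derivative inner product by the convection, stabilisation, diffusion and data contributions, and bound each in turn. The only genuine difference between the two statements is the choice of test function and the treatment of the convection form. For \eqref{lemma817-1} I would set $z := \delta u_h^{n+1}-P_0(\delta u_h^{n+1})$ and test with $v_h = \pi_h z \in \mVh$; since $\delta u_h^{n+1}\in\mVh$ and $P_0$ is the $L^2$ projection onto $W_h$, one has $(\delta u_h^{n+1},\pi_h z)_\Omega = (\delta u_h^{n+1},z)_\Omega = \|z\|^2$, so the left-hand side becomes $\|\delta u_h^{n+1}-P_0(\delta u_h^{n+1})\|^2$ and \eqref{CNscheme} gives $\|z\|^2 = \tau L^{n+1}(\pi_h z) - \tau c_h(\hat u_h^{n+1},\pi_h z) - \tau a(\overline u_h^{n+1},\pi_h z)$. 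For \eqref{lemma817-2} I would instead test directly with $v_h = \delta u_h^{n+1}$, obtaining $\|\delta u_h^{n+1}\|^2$ on the left.

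The contributions that are common to both estimates I would dispatch exactly as in the proof of \eqref{DtauU}. The diffusion term $-\tau a(\overline u_h^{n+1},\cdot)$ is bounded by Cauchy--Schwarz and an inverse estimate, using the definitions of $\Co$ and $\Pe$, by $C\sqrt{\tau\Co}\,\Pe^{-1/2}E(\overline u_h^{n+1})$; the data term $\tau L^{n+1}(\cdot)$ is controlled with \eqref{eq:Ldef} together with \eqref{eq:Ebound}, yielding the factor $C\sqrt{\tau}(\sqrt{\Co}/\sqrt{\Pe}+\sqrt{\gamma\Co}+\sqrt{\tau})\|L^{n+1}\|_h$. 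The stabilisation term $-\tau\gamma s(\hat u_h^{n+1},\cdot)$ I would split through the key identity \eqref{814}, namely $\hat u_h^{n+1} = \overline u_h^{n+1}-\tfrac12\delta\delta u_h^{n+1}$: the $\overline u_h^{n+1}$ part gives $C\sqrt{\tau\Co}\,\sqrt{\gamma}\,E(\overline u_h^{n+1})$ via \eqref{eq:s_cont}, while the $\delta\delta u_h^{n+1}$ part, bounded by $|\delta\delta u_h^{n+1}|_s\le C\sqrt{\Co/\tau}\,\|\delta\delta u_h^{n+1}\|$ and the analogous bound on the test function, produces the $C\gamma\Co\|\delta\delta u_h^{n+1}\|$ term. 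Finally I would divide through by $\|z\|$ (respectively $\|\delta u_h^{n+1}\|$).

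The main obstacle, and the place where the two cases diverge, is the convection form. For $p=1$ I would exploit that $\nabla\hat u_h^{n+1}$ is piecewise constant, so $\beta_0\cdot\nabla\hat u_h^{n+1}\in W_h$; combined with the $L^2(W_h)$-orthogonality of $z=\delta u_h^{n+1}-P_0(\delta u_h^{n+1})$, this lets me decompose $c(\hat u_h^{n+1},\pi_h z)$ into a $(\beta-\beta_0)$ part, controlled by $\|\nabla\beta\|_\infty\|\hat u_h^{n+1}\|$ through \eqref{beta0} and an inverse estimate, and a gradient-jump part, controlled through \eqref{interpinq2a}--\eqref{interpinq2} by $\|\beta\|_\infty^{1/2}h^{-1/2}|\hat u_h^{n+1}|_s$. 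Converting the powers of $\tau$ into $\Co$ and applying \eqref{814} once more to $|\hat u_h^{n+1}|_s$ then yields precisely the $\tau\|\nabla\beta\|_\infty\|\hat u_h^{n+1}\|$ term, the $\gamma^{-1/2}$-weighted $E(\overline u_h^{n+1})$ contribution, and an extra $\Co\|\delta\delta u_h^{n+1}\|$ piece absorbed into the $\delta\delta$ term. This is exactly why the estimate is phrased for $\|\delta u_h^{n+1}-P_0(\delta u_h^{n+1})\|$ and requires $p=1$. For general $p\ge1$ this piecewise-constant structure is unavailable, so I would instead bound the convection crudely by an inverse estimate, $\tau|c(\hat u_h^{n+1},\delta u_h^{n+1})|\le C\|\beta\|_\infty\tfrac{\tau}{h}\|\hat u_h^{n+1}\|\,\|\delta u_h^{n+1}\|$, which gives the coarser final term of \eqref{lemma817-2}; the diffusion, stabilisation (via \eqref{preEbound1}--\eqref{preEbound2}) and data terms are identical to the $p=1$ case, and the $\gamma^{-1/2}E(\overline u_h^{n+1})$ term is retained only to present both bounds in a common form as a (harmless) nonnegative over-estimate. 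The delicate bookkeeping is entirely in tracking the powers of $\tau$, $\Co$ and $\gamma$ so that each contribution lands in the advertised slot.
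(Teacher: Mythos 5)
Your proposal is correct and follows essentially the same route as the paper: both proofs test \eqref{CNscheme} with $\pi_h(\delta u_h^{n+1}-P_0(\delta u_h^{n+1}))$ (resp.\ $\delta u_h^{n+1}$), bound the diffusion, stabilisation and data terms via inverse estimates, \eqref{eq:s_cont}, \eqref{eq:Ldef}--\eqref{eq:Ebound} and the splitting \eqref{814}, and treat the convection term for $p=1$ through the $\beta_0$ decomposition, the $W_h$-orthogonality and \eqref{interpinq}, versus a crude inverse estimate for general $p$. No substantive differences.
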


\begin{proof}
We first prove \eqref{lemma817-1}.  Let $y_h=\delta u_h^{n+1}$ and then we have by \eqref{CNscheme}
\begin{alignat*}{1}
\|y_h-P_0 y_h\|^2=&( y_h, y_h-P_0(y_h))_{\Omega}\\
=&(y_h, \pi_h( y_h-P_0(y_h)))_{\Omega}\\
=& -\tau c_h(\hat{u}_h^{n+1}, \pi_h( y_h-P_0(y_h)))-\tau a( \overline{u}_h^{n+1}, \pi_h( y_h-P_0(y_h))) \\
&+\tau L^{n+1}(\pi_h( y_h-P_0(y_h))).
\end{alignat*}
We use the Cauchy-Schwarz inequality followed inverse estimates to bound the symmetric terms
\begin{alignat*}{1}
- \tau  a( \overline{u}_h^{n+1}, \pi_h( y_h-P_0(y_h)))\le& C \frac{\sqrt{\mu}\tau}{h} \|\sqrt{\mu}  \nabla \overline{u}_h^{n+1}\| \, \| y_h-P_0(y_h)\| \\
 \le& C \frac{\sqrt{\tau \Co}}{\sqrt{\Pe}} \|\sqrt{\mu}  \nabla \overline{u}_h^{n+1}\| \, \| y_h-P_0(y_h)\|,
\end{alignat*}
and for the stabilization we apply \eqref{eq:s_cont}
\begin{alignat}{1}
-\tau   \gamma s(\hat{u}_h^{n+1},\pi_h( y_h-P_0(y_h))) 
\le&   C\sqrt{\tau} \sqrt{\gamma \Co}         \sqrt{\gamma} | \hat{u}_h^{n+1}|_s \| y_h-P_0(y_h)\|  \nonumber \\
  \le & C(\sqrt{\tau} \sqrt{\gamma \Co}         \sqrt{\gamma} | \overline{u}_h^{n+1}|_s+  \gamma \Co \| \delta \delta u_h^{n+1}\|)  \| y_h-P_0(y_h)\|.   \label{eq:stab_contCN}
\end{alignat}
\flushleft
Next we bound $\tau L^{n+1}(\pi_h( y_h-P_0(y_h)))$ using \eqref{eq:Ldef} and \eqref{eq:Ebound}.
\begin{alignat*}{1}
\tau L^{n+1}(\pi_h( y_h-P_0(y_h))) \le &{\tau}\|L^{n+1}\|_h \sqrt{ E( \pi_h( y_h-P_0(y_h)))^2+ \|\pi_h( y_h-P_0(y_h))\|^2} \\
 \le &C\sqrt{\tau} ( \frac{\sqrt{\Co}}{\sqrt{\Pe}}+ \sqrt{\gamma \Co} +\sqrt{\tau}) \|L^{n+1}\|_h   \|y_h-P_0(y_h)\|.
\end{alignat*}
\flushleft
It only remains to bound the contribution from the form $c$.
\begin{alignat*}{1}
-\tau c(\hat u_h^{n+1}, \pi_h( y_h-P_0(y_h))) =&-\tau ((\beta-\beta_0) \cdot \nabla \hat u_h^{n+1}, \pi_h( y_h-P_0(y_h)))_{\Omega} \\
&-\tau (\beta_0 \cdot \nabla \hat u_h^{n+1}-w_h^{n+1}, (I-\pi_h)( y_h-P_0(y_h)))_{\Omega}.
\end{alignat*}
Here $w_h \in \mVh$ is arbitrary. Note that we crucially used that $p=1$ which implies that $\beta_0 \cdot \nabla \hat{u}_h^{n+1} \in W_h$.
\flushleft
Hence, using \eqref{beta0} and \eqref{interpinq} we obtain
\begin{alignat*}{1}
&-\tau c(\hat u_h^{n+1}, \pi_h( y_h-P_0(y_h)))\\
 \le & C \tau (\|\nabla \beta\|_{\infty}\|\hat u_h^{n+1}\|+ \|\beta\|_{\infty}^{1/2} h^{-1/2} | \hat u_h^{n+1}|_s)  \|y_h-P_0(y_h)\| \\
\le & C \sqrt{\tau} (\sqrt{\tau} \|\nabla \beta\|_{\infty} \|\hat u_h^{n+1}\|+ \sqrt{\Co} \frac{1}{\sqrt{\gamma}}  \sqrt{\gamma}| \hat u_h^{n+1}|_s)  \|y_h-P_0(y_h)\|.
\end{alignat*}
\flushleft
Finally, by the triangle inequality and \eqref{eq:s_cont} we have the bound $| \hat u_h^{n+1}|_s \le | \overline{u}_h^{n+1}|_s+ \frac{C \sqrt{\Co}}{\sqrt{\tau}} \|\delta \delta u_h^{n+1}\|$.  Combining the above inequalities gives \eqref{lemma817-1}.
\flushleft
Now we prove  \eqref{lemma817-2}. Using \eqref{CNscheme} we have
\begin{alignat*}{1}
\|y_h\|^2= -\tau c(\hat{u}_h^{n+1},  y_h)- \tau\gamma  s(\hat{u}_h^{n+1},  y_h) -\tau a( \overline{u}_h^{n+1}, y_h) +\tau L^{n+1}( y_h).
\end{alignat*}
Similar to what  we did above we can show that 
\begin{alignat*}{1}
& - \tau\gamma  s(\hat{u}_h^{n+1},  y_h) -\tau a( \overline{u}_h^{n+1}, y_h) +\tau L^{n+1}( y_h)  \\
\le &  C \frac{\sqrt{\tau \Co}}{\sqrt{\Pe}} \|\sqrt{\mu}  \nabla \overline{u}_h^{n+1}\| \, \| y_h\| \\
&+  C(\sqrt{\tau} \sqrt{\gamma \Co}         \sqrt{\gamma} | \overline{u}_h^{n+1}|_s+  \gamma \Co \| \delta \delta u_h^{n+1}\|)  \| y_h\|  \\
&+C\sqrt{\tau} ( \frac{\sqrt{\Co}}{\sqrt{\Pe}}+ \sqrt{\gamma \Co} +\sqrt{\tau}) \|L^{n+1}\|_h \| y_h\|.
\end{alignat*}
We then bound the remaining term using inverse estimates
 \begin{alignat*}{1}
 -\tau c(\hat{u}_h^{n+1},  y_h) \le \frac{C \tau}{h} \| \beta\|_{\infty}\|\hat{u}_h^{n+1} \| \| y_h\|.
\end{alignat*}
Combining the above estimates proves  \eqref{lemma817-2}.
\end{proof}

\begin{corollary}\label{cor615}
Let $u_h$ solve \eqref{CNscheme}. 

Let $p=1$,  $\Pe>1$ and $\gamma>0$. If  $\Co$ is sufficiently small the following estimate holds
\begin{alignat}{1}
\sum_{n=1}^{N-1}\|\delta \delta u_h^{n+1}\|^2 + \sum_{n=1}^{N-1} \|\delta u_h^{n+1}-P_0(\delta u_h^{n+1})\|^2 \le&  C \Co |||\overline{u}_h^{n+1}|||^2
 + C \tau^2 \|\nabla \beta\|_{\infty}^2  \sum_{n=0}^{N-1}\|u_h^{n+1}\|^2 \nonumber \\
&+ C\tau   \sum_{n=0}^{N-1} \|L^{n+1}\|_h^2. \label{cor615-1}
\end{alignat}
Let $p\ge 1$ and  $\Pe>1$. If  $\max\{\Co, \Co_{4/3}\}$ is sufficiently small the following estimate holds
\begin{alignat}{1}
 \sum_{n=1}^{N-1} \|\delta u_h^{n+1}\|^2 \le&  C \Co |||\overline{u}_h|||^2 + C\tau   \sum_{n=1}^{N-1} \|L^{n+1}\|_h^2  \nonumber \\
 & +  C(\sqrt{\tau}  (\Co_{4/3})^2+\tau)  \sum_{n=1}^{N-1} \|u_h^{n+1}\|^2.  \label{cor615-2}
\end{alignat}

\end{corollary}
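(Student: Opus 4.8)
The plan is to read both estimates as purely algebraic consequences of the four preceding bounds: \eqref{lemma818-1} and \eqref{lemma818-2} control $A:=\sum_{n=1}^{N-1}\|\delta\delta u_h^{n+1}\|^2$, while \eqref{lemma817-1} and \eqref{lemma817-2} control the individual increments. The two statements are then obtained by squaring the increment bounds, summing in $n$, inserting the bound for $A$, and closing the resulting (coupled) inequalities by absorption once $\Co$, respectively $\max\{\Co,\Co_{4/3}\}$, is small enough. Throughout I use that, since $\Pe>1$ and $\gamma>0$ is fixed, the coefficient $\tfrac{1}{\sqrt{\Pe}}+\sqrt{\gamma}+\tfrac{1}{\sqrt{\gamma}}$ appearing in \eqref{lemma817-1} and \eqref{lemma817-2} is bounded by a constant, that $\tau\sum_{n}E(\overline u_h^{n+1})^2=|||\overline u_h|||^2$, and that $\hat u_h^{n+1}=\tfrac32 u_h^{n}-\tfrac12 u_h^{n-1}$ gives $\sum_{n}\|\hat u_h^{n+1}\|^2\le C\sum_{n}\|u_h^{n+1}\|^2$ up to a harmless index shift.

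For \eqref{cor615-1} (the case $p=1$) I set $B:=\sum_{n=1}^{N-1}\|\delta u_h^{n+1}-P_0(\delta u_h^{n+1})\|^2$. Squaring \eqref{lemma817-1} and summing produces
\[
B\le C\Co\,|||\overline u_h|||^2+C\gamma^2\Co^2 A+C\tau^2\|\nabla\beta\|_\infty^2\sum_{n=0}^{N-1}\|u_h^{n+1}\|^2+C\tau\sum_{n}\|L^{n+1}\|_h^2,
\]
where the first term comes from the $E(\overline u_h^{n+1})$ contribution and the third from the $\|\hat u_h^{n+1}\|$ contribution. Adding this to \eqref{lemma818-1}, which has the form $A\le C\Co B+C\Co\,|||\overline u_h|||^2+C\tau\sum_{n}\|L^{n+1}\|_h^2$, I absorb the cross terms $C\Co B$ and $C\gamma^2\Co^2 A$ into $A+B$ on the left; this is legitimate once $\Co$ is small enough (depending only on $\gamma$ and mesh constants), and it yields \eqref{cor615-1} directly.

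For \eqref{cor615-2} (the case $p\ge1$) the argument is one step shorter, because \eqref{lemma818-2} bounds $A$ on its own and \eqref{lemma817-2} bounds $\|\delta u_h^{n+1}\|$ (rather than its projection-free part) directly, so no self-absorption of the left-hand side is needed. I square \eqref{lemma817-2}, sum, and substitute \eqref{lemma818-2} for the term $C\gamma^2\Co^2 A$; the contribution $C\gamma^2\Co^2\,\tau\sum_n\|u_h^{n+1}\|^2$ produced this way is of order $\tau$ and supplies the $\tau\sum_n\|u_h^{n+1}\|^2$ term of \eqref{cor615-2}. The only genuinely new estimate is the last term of \eqref{lemma817-2}: its square carries the factor $\|\beta\|_\infty^2\tau^2/h^2$, and rewriting this through $\Co_{4/3}=\tau(\|\beta\|_\infty/h)^{4/3}$ gives $\|\beta\|_\infty^2\tau^2/h^2=\sqrt{\tau}\,(\Co_{4/3})^{3/2}$, the CFL-small coefficient that multiplies $\sum_n\|u_h^{n+1}\|^2$ and plays exactly the role of the $\sqrt{\tau}(\Co_{4/3})^2$ term displayed in \eqref{cor615-2}.

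The routine parts are the Cauchy--Schwarz and Young steps hidden in ``squaring and summing'' and the index bookkeeping for $\hat u_h^{n+1}$ and $\overline u_h^{n+1}$. The one point that needs care, and where the essential structure of the result lives, is the closing of the system: in the first part the estimates for $A$ and $B$ are genuinely coupled, so the smallness of $\Co$ must be used to absorb both $C\Co B$ and $C\gamma^2\Co^2 A$ simultaneously; in the second part the corresponding step is the conversion of $\|\beta\|_\infty^2\tau^2/h^2$ into a positive power of $\Co_{4/3}$ and of $\tau$, which is precisely where the stronger $4/3$-CFL condition is forced upon us for general polynomial degree.
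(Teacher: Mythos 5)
Your argument is correct and is essentially the paper's own proof: square the increment bounds of Lemma \ref{lemma817}, sum in $n$, combine with Lemma \ref{lemma818}, and absorb the coupled terms using the smallness of $\Co$ (resp.\ $\max\{\Co,\Co_{4/3}\}$). The only differences are cosmetic --- you close the coupled system for \eqref{cor615-1} by adding the two inequalities and absorbing both cross terms at once rather than substituting one bound into the other, and the exponent $(\Co_{4/3})^{3/2}$ you obtain from $\tau^2\|\beta\|_\infty^2/h^2=\sqrt{\tau}\,(\Co_{4/3})^{3/2}$ is what the computation actually yields (the $(\Co_{4/3})^{2}$ in the displayed statement appears to be a typo, harmless since only smallness of the coefficient is used downstream).
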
 
\begin{proof}
We first prove \eqref{cor615-1}. From  \eqref{lemma817-1} we get
\begin{alignat*}{1}
 \sum_{n=1}^{N-1} \|\delta u_h^{n+1}-P_0(\delta u_h^{n+1})\|^2 \le&  C \Co |||\overline{u}_h|||^2
 + C \tau^2 \|\nabla \beta\|_{\infty}^2  \sum_{n=1}^{N-1}\|u_h^{n+1}\|^2 \\
&+ C\tau   \sum_{n=1}^{N-1} \|L^{n+1}\|_h^2+C  \sum_{n=0}^{N-1}\|\delta \delta u_h^{n+1}\|^2. 
\end{alignat*}
Using \eqref{lemma818-1} we thus obtain
\begin{alignat*}{1}
 \sum_{n=1}^{N-1} \|\delta u_h^{n+1}-P_0(\delta u_h^{n+1})\|^2 \le&  C \Co |||\overline{u}_h^{n+1}|||^2
 + C \tau^2 \|\nabla \beta\|_{\infty}^2  \sum_{n=1}^{N-1}\|u_h^{n+1}\|^2 \\
&+ C\tau   \sum_{n=1}^{N-1} \|L^{n+1}\|_h^2+C\Co  \sum_{n=0}^{N-1} \|\delta u_h^{n+1}-P_0(\delta u_h^{n+1})\|^2.
\end{alignat*}
If $\Co$ is sufficiently small we get 
\begin{alignat*}{1}
 \sum_{n=1}^{N-1} \|\delta u_h^{n+1}-P_0(\delta u_h^{n+1})\|^2 \le&  C \Co |||\overline{u}_h^{n+1}|||^2
 + C \tau^2 \|\nabla \beta\|_{\infty}^2  \sum_{n=1}^{N-1}\|u_h^{n+1}\|^2 \\
&+ C\tau   \sum_{n=1}^{N-1} \|L^{n+1}\|_h^2.
\end{alignat*}
The bound for $\sum_{n=1}^{N-1}\|\delta \delta u_h^{n+1}\|^2$ follows from this and using  \eqref{lemma818-1} again.

Now we prove  \eqref{cor615-2}. From  \eqref{lemma817-2} we get
\begin{alignat*}{1}
 \sum_{n=1}^{N-1} \|\delta u_h^{n+1}\|^2 \le&  C \Co |||\overline{u}_h|||^2 + C\tau   \sum_{n=1}^{N-1} \|L^{n+1}\|_h^2+C  \sum_{n=0}^{N-1}\|\delta \delta u_h^{n+1}\|^2 \\
 & +  \frac{C \tau^2 \|\beta\|_{\infty}^2 }{h^2}  \sum_{n=1}^{N-1} \|u_h^{n+1}\|^2.
\end{alignat*}
The estimate \eqref{cor615-2} now follows from  \eqref{lemma818-2} and using the definiton of $\Co_{4/3}$.
\end{proof}

\begin{theorem}\label{thm:stabCN}
Let  $T=N \tau$. Suppose that $\Co$ is chosen sufficiently small only depending on geometric constants of the mesh and $\gamma$.  For $\{u_h^n\}$ solving \eqref{CNscheme} we have the following bounds:

If $\Pe \le 1$ then for all $p \ge 1$,
\begin{alignat}{1}
\|u_h^{N}\|^2+ \tau \|\sqrt{\mu} \nabla(\delta u_h^{N})\|^2 \le (1+  Te^{T}) M, \label{thmCN1}
\end{alignat}
where
\begin{alignat*}{1}
M=& \tau \|\sqrt{\mu} \nabla(\delta u_h^{1})\|^2+ \|u_h^{1}\|^2+16  \tau \sum_{n=1}^{N-1} \|L^{n+1}\|_h^2   +4   \sum_{n=1}^{N-1} \|\delta L^{n+1}\|_h^2 .
\end{alignat*}

If $\Pe > 1$  and $p=1$, $\gamma>0$ then 
\begin{alignat}{1}
\|u_h^{N}\|^2  \le &    (2\|u_h^{1}\|^2+ \|u_h^{2}\|^2+ 36 \tau \sum_{n=1}^{N-1} \|L^{n+1}\|_h^2) M,  \label{thmCN2}
\end{alignat}
where 
\begin{equation}\label{Mex}
M=\Big(1+T (\tau \|\nabla \beta\|_{\infty}^2+\frac{c_L}{8}) e^{ T (\tau \|\nabla \beta\|_{\infty}^2+\frac{c_L}{8})}\Big). 
\end{equation}

If $\Pe > 1$  and $p \ge 1$,  and $\max \{\Co, \Co_{4/3}\}$ is sufficiently small we have
\begin{alignat}{1}
\|u_h^{N}\|^2  \le &    (2\|u_h^{1}\|^2+ \|u_h^{2}\|^2+ 36 \tau \sum_{n=1}^{N-1} \|L^{n+1}\|_h^2) M,  \label{thmCN3}
\end{alignat}
with $M$ as in \eqref{Mex}.

\end{theorem}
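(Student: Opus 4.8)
The plan is to run the same energy argument as for the BDF2 scheme (Theorems~\ref{thm:stab} and~\ref{thm:stab2}), but testing \eqref{CNscheme} with the natural Crank--Nicolson quantity $\overline{u}_h^{n+1}$. Since $(\delta u_h^{n+1},\overline{u}_h^{n+1})_\Omega=\tfrac12(\|u_h^{n+1}\|^2-\|u_h^{n}\|^2)$ telescopes, while \eqref{814} and the skew-symmetry $c(\overline{u}_h^{n+1},\overline{u}_h^{n+1})=0$ turn the symmetric part of $\tau c_h(\hat{u}_h^{n+1},\overline{u}_h^{n+1})+\tau a(\overline{u}_h^{n+1},\overline{u}_h^{n+1})$ into $\tau E(\overline{u}_h^{n+1})^2-\tfrac{\tau}{2}c_h(\delta\delta u_h^{n+1},\overline{u}_h^{n+1})$, summing over $n=1,\dots,N-1$ gives the energy identity
\begin{equation*}
\tfrac12\|u_h^{N}\|^2+|||\overline{u}_h|||^2=\tfrac12\|u_h^{1}\|^2+R_1+R_2+R_3,
\end{equation*}
with $R_1=\tfrac{\tau}{2}\sum_{n=1}^{N-1}c(\delta\delta u_h^{n+1},\overline{u}_h^{n+1})$, $R_2=\tfrac{\tau}{2}\gamma\sum_{n=1}^{N-1}s(\delta\delta u_h^{n+1},\overline{u}_h^{n+1})$ and $R_3=\tau\sum_{n=1}^{N-1}L^{n+1}(\overline{u}_h^{n+1})$. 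The whole task is to absorb $R_1,R_2,R_3$; the identity is also used with $N$ replaced by an arbitrary $m\le N$ to set up the Gronwall step at the end.

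The two symmetric residuals are routine and mimic $S_2,S_3$ of Theorem~\ref{thm:stab}. For $R_3$ I would use \eqref{eq:Ldef}, \eqref{eq:Ebound} and Young's inequality to get $R_3\le C\tau\sum\|L^{n+1}\|_h^2+\tfrac1{16}|||\overline{u}_h|||^2+\tfrac{c_L}{16}\tau\sum\|\overline{u}_h^{n+1}\|^2$. For $R_2$, \eqref{eq:s_cont} applied to $\delta\delta u_h^{n+1}$ yields $\tau^{1/2}|\delta\delta u_h^{n+1}|_s\le C\Co^{1/2}\|\delta\delta u_h^{n+1}\|$, hence $R_2\le C\gamma\Co\sum\|\delta\delta u_h^{n+1}\|^2+\tfrac1{16}|||\overline{u}_h|||^2$. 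So after these two steps the only uncontrolled quantity is $\sum\|\delta\delta u_h^{n+1}\|^2$, which is exactly what Lemma~\ref{lemma819} and Corollary~\ref{cor615} are built to bound, once $R_1$ has been dealt with.

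The convection residual $R_1$ is the main obstacle and is handled differently in each regime. When $\Pe\le1$ diffusion controls convection directly: skew-symmetry and Cauchy--Schwarz give $R_1\le C\sqrt{\Co\Pe}\,|||\overline{u}_h|||\,(\sum\|\delta\delta u_h^{n+1}\|^2)^{1/2}$, and since $\Pe\le1$ and $\Co$ is small this is $\le\tfrac1{16}|||\overline{u}_h|||^2+\tfrac1{16}\sum\|\delta\delta u_h^{n+1}\|^2$; I would then invoke Lemma~\ref{lemma819}, whose right-hand side supplies the absorbable $C\Co(\Pe+1)|||\overline{u}_h|||^2$, the load $\sum\|\delta L^{n+1}\|_h^2$, the telescoping $-\tau\|\sqrt{\mu}\nabla\delta u_h^{N}\|^2$ (moved to the left, which is why \eqref{thmCN1} carries $\tau\|\sqrt{\mu}\nabla\delta u_h^{N}\|^2$), and the term $\tau^2\sum\|\sqrt{\mu}\nabla\delta u_h^{n+1}\|^2$ that will drive the Gronwall step. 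When $\Pe>1$ the factor $\sqrt\Pe$ ruins this, so instead I would apply the summation-by-parts formula \eqref{rbi1} with $v^{n+1}=\delta u_h^{n+1}$ and $w^{n+1}=\overline{u}_h^{n+1}$ to trade $\overline{u}_h^{n+1}$ for increments, exactly as in Theorem~\ref{thm:stab2}; using $\delta\overline{u}_h^{n+1}=\tfrac12(\delta u_h^{n+1}+\delta u_h^{n})$ and skew-symmetry this produces boundary terms $\tfrac{\tau}{2}c(\delta u_h^{N},\overline{u}_h^{N})$ and $\tfrac{\tau}{2}c(\delta u_h^{1},\overline{u}_h^{2})$, bounded by \eqref{eq:c_cont} by $C\Co(\|u_h^{N}\|^2+\|u_h^{N-1}\|^2)$ (absorbed) and $C\Co(\|u_h^{1}\|^2+\|u_h^{2}\|^2)$ (data), together with a main sum $\tfrac{\tau}{4}\sum c(\delta\delta u_h^{n+1},\delta u_h^{n+1})$ that is structurally the crucial term of the BDF2 analysis.

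It remains to bound this main sum, and here the CFL conditions enter. For $p=1$ I would flip by skew-symmetry and use $\nabla\delta u_h^{n+1}=\nabla(\delta u_h^{n+1}-P_0(\delta u_h^{n+1}))$ elementwise, so that an elementwise version of \eqref{eq:c_cont} yields $C\Co\sum(\|\delta u_h^{n+1}-P_0(\delta u_h^{n+1})\|^2+\|\delta\delta u_h^{n+1}\|^2)$; both sums are controlled by \eqref{cor615-1}, returning an absorbable $C\Co|||\overline{u}_h|||^2$, the Gronwall-driving $C\tau^2\|\nabla\beta\|_\infty^2\sum\|u_h^{n+1}\|^2$ and the load. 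For general $p\ge1$ the fluctuation trick is unavailable, so I would bound the main sum through $C_h$ and the standard-CFL estimate \eqref{ChCo}, $\tfrac{\tau}{4}c(\delta\delta u_h^{n+1},\delta u_h^{n+1})=\tfrac14(\tau C_h\delta\delta u_h^{n+1},\delta u_h^{n+1})\le C\Co\|\delta\delta u_h^{n+1}\|\,\|\delta u_h^{n+1}\|$, and control $\sum\|\delta\delta u_h^{n+1}\|^2$ and $\sum\|\delta u_h^{n+1}\|^2$ by \eqref{lemma818-2} and \eqref{cor615-2}, which require $\Co_{4/3}$ small. In all three cases, taking $\Co$ (and, for $p\ge1$, $\Co_{4/3}$) small enough absorbs every $|||\overline{u}_h|||^2$, every $\sum\|\delta\delta u_h^{n+1}\|^2$, and the $C\Co\|u_h^{N}\|^2$ boundary term into the left-hand side; writing $u_h^{n+1}=2\overline{u}_h^{n+1}-u_h^{n}$ and applying the identity for every $m\le N$ then casts the estimate in the form of the discrete Gronwall inequality \eqref{gronwall}, with exponent $\eta$ of order $\tau$ when $\Pe\le1$ (giving the factor $1+Te^{T}$ of \eqref{thmCN1}) and of order $\tau\|\nabla\beta\|_\infty^2+c_L$ when $\Pe>1$ (giving $M$ as in \eqref{Mex} for \eqref{thmCN2} and \eqref{thmCN3}). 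The single hardest point is the high-Peclet residual $R_1$: producing a bound simultaneously independent of $\Pe$ and free of negative powers of $h$ is what forces the summation by parts followed by the $P_0$-fluctuation ($p=1$) or the $C_h$/$4/3$-CFL machinery ($p\ge1$), and it is there that all the preparatory lemmas are spent.
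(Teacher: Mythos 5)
Your proposal is correct and follows essentially the same route as the paper: the energy identity from testing with $\overline{u}_h^{n+1}$, the treatment of the stabilization and load residuals, the use of Lemma \ref{lemma819} (with its telescoping diffusion term explaining the $\tau\|\sqrt{\mu}\nabla(\delta u_h^N)\|^2$ on the left of \eqref{thmCN1}) for $\Pe\le 1$, and for $\Pe>1$ the summation by parts on the convection residual followed by the $P_0$-fluctuation argument with Corollary \ref{cor615} when $p=1$, or the $C_h$/$4/3$-CFL estimates \eqref{lemma818-2}, \eqref{cor615-2} for general $p$. The only deviations are bookkeeping ones (you sum by parts directly against $\overline{u}_h^{n+1}$ rather than first splitting off $\delta u_h^{n+1}$ as the paper's $D_1+D_2$), which lead to the same terms.
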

\begin{proof}
Choose $v_h=\overline{u}_h^{n+1}$ in \eqref{CNscheme} and use \eqref{814}, \eqref{cgamma} to get
\end{proof}
\begin{alignat}{1}
\frac{1}{2}\|u_h^{n+1}\|^2-\frac{1}{2} \|u_h^{n}\|^2+ \tau E(\overline{u}_h^{n+1})^2 = \frac{\tau}{2}  c_h(\delta \delta u_h^{n+1}, \overline{u}_h^{n+1})+  \tau L^{n+1}(\overline{u}_h^{n+1}), \label{619}
\end{alignat}
Taking the sum from $1 \le n \le N-1$ in \eqref{619} we arrive at
\begin{alignat}{1}
&
  \frac{1}{2}\|u_h^{N}\|^2 - \frac{1}{2}\|u_h^{1}\|^2 + ||| \overline{u}_h|||^2 =  S_1+S_2 +S_3, \label{614}
\end{alignat}
where
\begin{alignat*}{1} 
S_1:=& \frac{\tau}{2} \sum_{n=1}^{N-1}  c(\delta \delta u_h^{n+1}, \overline{u}_h^{n+1})  \\
S_2:=& \frac{\tau}{2} \sum_{n=1}^{N-1}  \gamma s(\delta \delta u_h^{n+1}, \overline{u}_h^{n+1})  \\
S_3:=& \tau \sum_{n=1}^{N-1} L^{n+1}(\overline{u}_h^{n+1}).\\
\end{alignat*}

Let us estimate $S_3$. We have 
\begin{alignat*}{1}
S_3 \le & \tau \sum_{n=1}^{N-1} \|L^{n+1}\|_h \sqrt{E(\overline{u}_h^{n+1})^2+\|\overline{u}_h^{n+1}\|^2} \\
\le &  8 \tau \sum_{n=1}^{N-1} \|L^{n+1}\|_h^2 + \frac{1}{32} ||| \overline{u}_h|||^2+ \frac{c_L}{32} \tau  \sum_{n=1}^{N-1} \|u_h^{n+1}\|^2. 
\end{alignat*}

We now consider three cases. 

{\bf Case 1: $\Pe < 1$, $p \ge 1$}. 

In this case, we can easily show that 
\begin{alignat*}{1}
S_2=&  -\frac{\tau}{2} \sum_{n=1}^{N-1}  c( \overline{u}_h^{n+1}, \delta \delta u_h^{n+1})  \\
 \le &  \frac{\|\beta\|_{\infty} \tau}{\sqrt{\mu}}   \sum_{n=1}^{N-1} \| \sqrt{\mu} \nabla  (\overline{u}_h^{n+1})\|   \, \|  \delta \delta u_h^{n+1}\| \\
\le & \sqrt{\Co \Pe} \sqrt{\tau}   \sum_{n=1}^{N-1} \| \sqrt{\mu} \nabla  (\overline{u}_h^{n+1})\|   \, \|  \delta \delta u_h^{n+1}\| \\
\le &  \tau  C \Co \sum_{n=1}^{N-1} \| \sqrt{\mu} \nabla  (\overline{u}_h^{n+1})\|^2  +  \frac{1}{4}\sum_{n=1}^{N-1}  \|  \delta \delta u_h^{n+1}\|^2.
\end{alignat*}

Similarly,  
\begin{alignat*}{1}
S_1=&  C \sqrt{\Co } \sqrt{\tau}   \sum_{n=1}^{N-1} \gamma |\overline{u}_h^{n+1}|_s \| \|  \delta \delta u_h^{n+1}\| \\
\le &  C \Co \gamma \tau \sum_{n=1}^{N-1} \gamma |\overline{u}_h^{n+1}|_s^2 +  \frac{1}{4} \sum_{n=1}^{N-1}  \|  \delta \delta u_h^{n+1}\|^2.
\end{alignat*}

Using \eqref{614} and combining the above inequalities we get
\begin{alignat*}{1}
\frac{1}{2}\|u_h^{N}\|^2 - \frac{1}{2}\|u_h^{1}\|^2 + ||| \overline{u}_h|||^2 \le &  \left(\frac{1}{32}+ \Co(1 +\gamma)\right)  ||| \overline{u}_h|||^2 + 8 \tau \sum_{n=1}^{N-1} \|L^{n+1}\|_h^2  \\
 &+\frac{c_L}{32} \tau  \sum_{n=1}^{N-1} \|u_h^{n+1}\|^2 + \frac{1}{2}\sum_{n=1}^{N-1}  \|  \delta \delta u_h^{n+1}\|^2.
\end{alignat*}
If we use Lemma \ref{lemma819} and the fact that $\Pe \le 1 $ we get 

\begin{alignat*}{1}
\frac{1}{2}\|u_h^{N}\|^2+ \frac{\tau}{2} \|\sqrt{\mu} \nabla(\delta u_h^{N})\|^2  + ||| \overline{u}_h|||^2 \le & \frac{\tau}{2} \|\sqrt{\mu} \nabla(\delta u_h^{1})\|^2+ \frac{1}{2}\|u_h^{1}\|^2+8  \tau \sum_{n=0}^{N-1} \|L^{n+1}\|_h^2 \\
 &  +2   \sum_{n=1}^{N-1} \|\delta L^{n+1}\|_h^2  +  \tau^2 \sum_{n=0}^{N-1} \|\sqrt{\mu} \nabla (\delta u_h^{n+1})\|^2  \\
 & + \frac{c_L}{32} \tau  \sum_{n=1}^{N-1} \|u_h^{n+1}\|^2 +  \left(\frac{1}{32}+C  \Co(1 +\gamma)\right)  ||| \overline{u}_h|||^2. 
\end{alignat*}
Choosing $\Co$ sufficiently small gives 
\begin{alignat*}{1}
\frac{1}{2}\|u_h^{N}\|^2+ \frac{\tau}{2} \|\sqrt{\mu} \nabla(\delta u_h^{N})\|^2  + \frac{1}{2}||| \overline{u}_h|||^2 \le & \frac{\tau}{2} \|\sqrt{\mu} \nabla(\delta u_h^{2})\|^2+ \frac{1}{2}\|u_h^{1}\|^2+8  \tau \sum_{n=0}^{N-1} \|L^{n+1}\|_h^2 \\
 &  +2   \sum_{n=1}^{N-1} \|\delta L^{n+1}\|_h^2  +  \tau^2 \sum_{n=1}^{N-1} \|\sqrt{\mu} \nabla (\delta u_h^{n+1})\|^2  \\
 & + \frac{c_L}{32} \tau  \sum_{n=1}^{N-1} \|u_h^{n+1}\|^2.
\end{alignat*}
Applying Gronwall's inequality gives \eqref{thmCN1}.

{\bf Case 2: $\Pe  > 1$, $p = 1$, $\gamma>0$}.
\flushleft
Now we estimate $S_2$.  Using the arithmetic-geometric mean inequality and inverse estimates we obtain
\begin{alignat*}{1}
S_2 \le &  C \tau  \sum_{n=1}^{N-1}    \gamma \frac{\sqrt{\|\beta\|_{\infty}}}{\sqrt{h}}  |\overline{u}_h^{n+1}|_s \|\delta \delta u_h^{n+1}\| \\
\le &  \frac{1}{32} ||| \overline{u}_h|||^2+ C \gamma \Co  \sum_{n=1}^{N-1} \|\delta \delta u_h^{n+1}\|^2.
\end{alignat*}

\flushleft
To estimate $S_1$ we  re-write it as follows
\begin{alignat*}{1} 
S_1=D_1+D_2,
\end{alignat*}
where 
\begin{alignat*}{1}
D_1:=& \frac{\tau}{2} \sum_{n=1}^{N-1}  c(\delta \delta u_h^{n+1}, u_h^{n+1}), \\
D_2:=&-\frac{\tau}{2} \sum_{n=1}^{N-1}  c(\delta \delta u_h^{n+1}, \delta u_h^{n+1}).
\end{alignat*}
We use integration by parts and inverse estimates to obtain
\begin{alignat*}{1}
D_2=&\frac{\tau}{2} \sum_{n=1}^{N-1}  c(\delta u_h^{n+1}, \delta \delta u_h^{n+1}) \\
\le  & C \Co  \sum_{n=1}^{N-1}  \|  \delta u_h^{n+1}-P_0( \delta u_h^{n+1})\|  \, \| \delta \delta u_h^{n+1}\| \\
\le & C \Co  \sum_{n=1}^{N-1}  \|  \delta u_h^{n+1}-P_0( \delta u_h^{n+1})\|^2 +  C \Co \sum_{n=1}^{N-1}  \| \delta \delta u_h^{n+1}\|^2.
\end{alignat*}

\flushleft
To estimate $D_1$ we use summation by parts \eqref{rbi1} to write
\begin{alignat*}{1} 
D_1=& -\frac{\tau}{2} \sum_{n=2}^{N-1}  c(\delta u_h^{n}, \delta u_h^{n+1}) +\frac{\tau}{2} \big(c(\delta u_h^N,  u_h^{N})- c(\delta u_h^1,  u_h^{2})\big) \\
 =&-\frac{\tau}{2} \sum_{n=2}^{N-1}  c(\delta u_h^{n}, \delta (u_h^{n+1}-u_h^n)) +\frac{\tau}{2} \big(c(\delta u_h^N,  u_h^{N})- c(\delta u_h^1, u_h^{2})\big)  \\
=&  -\frac{\tau}{2} \sum_{n=2}^{N-1}  c(\delta u_h^{n}, \delta \delta u_h^{n+1}) +\frac{\tau}{2} \big(c(\delta u_h^N,  u_h^{N})- c(\delta u_h^1,  u_h^{2})\big) . 
\end{alignat*}
We easily have
\begin{equation*}
-\frac{\tau}{2} \sum_{n=2}^{N-1}  c(\delta u_h^{n}, \delta \delta u_h^{n+1}) \le  C \Co  \sum_{n=1}^{N-1}  \|  \delta u_h^{n+1}-P_0( \delta u_h^{n+1})\|^2 +  C \Co \sum_{n=1}^{N-1}  \| \delta \delta u_h^{n+1}\|^2.
\end{equation*}
The next term can similarly be bounded:
\begin{alignat*}{1}
\frac{\tau}{2} \big(c(\delta u_h^N,  u_h^{N})- c(\delta u_h^1,  u_h^{2})\big) \le &  C  \Co \|  \delta u_h^{N}-P_0( \delta u_h^{N})\|^2+ C \Co \|   u_h^{N}\|^2 \\
&+ C  \Co \|  \delta u_h^{1}-P_0( \delta u_h^{1})\|^2+  C \Co \| u_h^{2}\|^2.
\end{alignat*}
Hence,  we arrive at
\begin{equation*}
D_1  \le  C \Co  \sum_{n=1}^{N-1}  \|  \delta u_h^{n+1}-P_0( \delta u_h^{n+1})\|^2 +  C \Co \sum_{n=1}^{N-1}  \| \delta \delta u_h^{n+1}\|^2+ C \Co \| u_h^{N}\|^2+  C \Co \|   u_h^{N}\|^2.
\end{equation*}
Which combined with the estimate for $D_2$ gives
\begin{equation*}
S_1  \le  C \Co  \sum_{n=1}^{N-1}  \|  \delta u_h^{n+1}-P_0( \delta u_h^{n+1})\|^2 +  C \Co \sum_{n=1}^{N-1}  \| \delta \delta u_h^{n+1}\|^2+ C \Co \| u_h^{N}\|^2+  C \Co \|   u_h^{2}\|^2.
\end{equation*}

\flushleft
Using \eqref{614} and combining the above inequalities we get
\begin{alignat*}{1}
\frac{1}{2}\|u_h^{N}\|^2 - \frac{1}{2}\|u_h^{1}\|^2 + ||| \overline{u}_h|||^2 \le &  \frac{1}{16}  ||| \overline{u}_h|||^2 + 8 \tau \sum_{n=1}^{N-1} \|L^{n+1}\|_h^2+\frac{c_L}{32} \tau  \sum_{n=1}^{N-1} \|u_h^{n+1}\|^2+ C \Co \| u_h^{N}\|^2   \\
 & + C \Co(1+\gamma) \sum_{n=1}^{N-1}  \|  \delta \delta u_h^{n+1}\|^2+  C \Co  \sum_{n=1}^{N-1}  \|  \delta u_h^{n+1}-P_0( \delta u_h^{n+1})\|^2 \\
 &+ C \Co \|   u_h^{2}\|^2.
\end{alignat*}
Applying  \eqref{cor615-1} we get 
\begin{alignat*}{1}
\frac{1}{2}\|u_h^{N}\|^2 - \frac{1}{2}\|u_h^{1}\|^2 + ||| \overline{u}_h|||^2 \le &  (\frac{1}{16}+ C \Co^2)  ||| \overline{u}_h|||^2 + (8 +C \Co) \tau \sum_{n=1}^{N-1} \|L^{n+1}\|_h^2\\
&+(\frac{c_L}{32} + \tau C \Co \|\nabla \beta\|_{\infty}^2) \tau  \sum_{n=1}^{N-1} \|u_h^{n+1}\|^2+ C \Co \| u_h^{N}\|^2+  C \Co \|   u_h^{2}\|^2.
\end{alignat*}
\flushleft
Taking $\Co$ sufficiently small we arrive at
 \begin{alignat*}{1}
\frac{1}{4}\|u_h^{N}\|^2 + \frac{1}{2}  ||| \overline{u}_h|||^2 \le &   \frac{1}{2}\|u_h^{1}\|^2 +9 \tau \sum_{n=1}^{N-1} \|L^{n+1}\|_h^2\\
&+(\frac{c_L}{32} + \frac{\tau}{4} \|\nabla \beta\|_{\infty}^2) \tau  \sum_{n=1}^{N-1} \|u_h^{n+1}\|^2 +  \frac{1}{4} \|   u_h^{2}\|^2.  
\end{alignat*}
The inequality \eqref{thmCN2} follows from the above inequality and the discrete Gronwall's inequality \eqref{gronwall}.

{\bf Case 3: $\Pe  \ge 1$, $p \ge 1$}.
We use the same estimates for $S_2$ and $S_3$ as in Case 2 above. Then
inspecting the proof of the estimate for $S_1$ in Case 2 we see that we could instead have shown.,
\begin{equation*}
S_1  \le  C \sqrt{\tau}(\Co_{4/3})^{3/2}  \sum_{n=1}^{N-1}  \|  \delta u_h^{n+1}\|^2 +  \epsilon \sum_{n=1}^{N-1}  \| \delta \delta u_h^{n+1}\|^2+ C \Co \| u_h^{N}\|^2+  C \Co \|   u_h^{2}\|^2,
\end{equation*}
where $\epsilon$ is a sufficiently small number.  Now using  \eqref{cor615-2} and \eqref{lemma818-2} with $\max\{ \Co, \Co_{4/3}\}$ sufficiently small we get. 
\begin{alignat*}{1}
S_1 \le   C \Co \| u_h^{N}\|^2+  C \Co \|   u_h^{2}\|^2+   C \Co |||\overline{u}_h|||^2 + 9\tau   \sum_{n=1}^{N-1} \|L^{n+1}\|_h^2   +  \tau \sum_{n=1}^{N-1} \|u_h^{n+1}\|^2.
\end{alignat*}
Then we can proceed as we did in the proof of Case 2 to prove  \eqref{thmCN2}.
\section{Numerical examples}\label{sec:numerics}
We consider the methods applied to the pure transport
  problem. That is the methods obtained in the limit of vanishing diffusion:
the second order backward differentiation with
extrapolation (BDF2) and the second order Adams-Bashforth (AB2) scheme, both
of which are covered by the above analysis. For piecewise affine
approximation ($P_1$) we use the hyperbolic CFL, $\tau = Co\, h$
($\|\beta\|_\infty =1$) and for
piecewise quadratic approximation we use the 4/3-CFL, $\tau = Co
h^{4/3}$. Numerical experiments show that with $P_1$ approximation the
methods are stable under hyperbolic CFL, only when $\gamma>0$
(i.e. when stabilization is present). We also observed that for $P_2$
the 4/3-CFL is necessary for all $\gamma \ge 0$. The
values of Courant numbers and stabilization parameters used for the
different methods are given in
Table \ref{tb:param}. We stress that these values are not limit
values for stability for each case, but rather values that produced good results in
all the experiments performed.

We also consider a
numerical example using the third order Adams-Bashforth (AB3) scheme,
a scheme with non-trivial imaginary stability boundary \cite{GFR15} that is
expected to be stable under hyperbolic CFL, independent of the value
of $\gamma$. This also turns out to be
the case. For this method we show the results both for the stabilized
and the unstabilized method to show that even though the time
integrator is stable and boundary conditions are imposed weakly,
strong gradients destroy the solution globally irrespective of
polynomial approximation order unless stabilization is added. 

\begin{table}[htb]
\centering
\begin{tabular}{|c|c|c|}
\hline
Method & $Co$ &  $\gamma$ \\
\hline
BDF2/P1 & $0.15$ & $0.01$ \\
BDF2/P2 & $0.05$ & $0.005$ \\
AB2/P1 & $0.3$ & $0.01$ \\
AB2/P2 & $0.1$ & $0.005$ \\
AB3/P2 & $0.025$ & $0.001$ \\
AB3/P3 & $0.025$ & $0.0003$\\
\hline
\end{tabular}
\vspace{0.5cm}
\caption{Table showing the Courant numbers and stabilization
  parameters used for the different methods}\label{tb:param}

\end{table}
For the first example we consider transport in the disc $\Omega:= \{(x,y) \in \mathbb{R}^2 :
x^2+y^2  < 1\}$ under the velocity field $\beta = (y,-x)$. Approximations
are computed on a series of unstructured mehes with $nele=40,80,160,320$ elements along the disc perimeter. We let
$f=0$ and consider two different functions $u_0$ as  initial data.
One is smooth,
\begin{equation}\label{eq:u0_smooth}
u_0 = e^{-30((x-0.5)^2+y^2)}
\end{equation}
and one is rough 
\begin{equation}\label{eq:u0_rough}
\tilde u_0 = \left\{\begin{array}{l}
1 \quad \sqrt{(x+0.5)^2 +y^2} <0.2\\
0 \quad \mbox{otherwise}
\end{array} \right. .
\end{equation}
The velocity field simply turns the disc with the initial data and we
compute one turn so that the final solution should be equal to the
inital data. Two numerical experiments are considered, compute $u$ for
the initial data $u_0$ and $u_0 + \tilde u_0$.
\begin{figure}[t]
\centering
\hspace{-0.5cm}
\includegraphics[width=0.25\linewidth, angle=90]{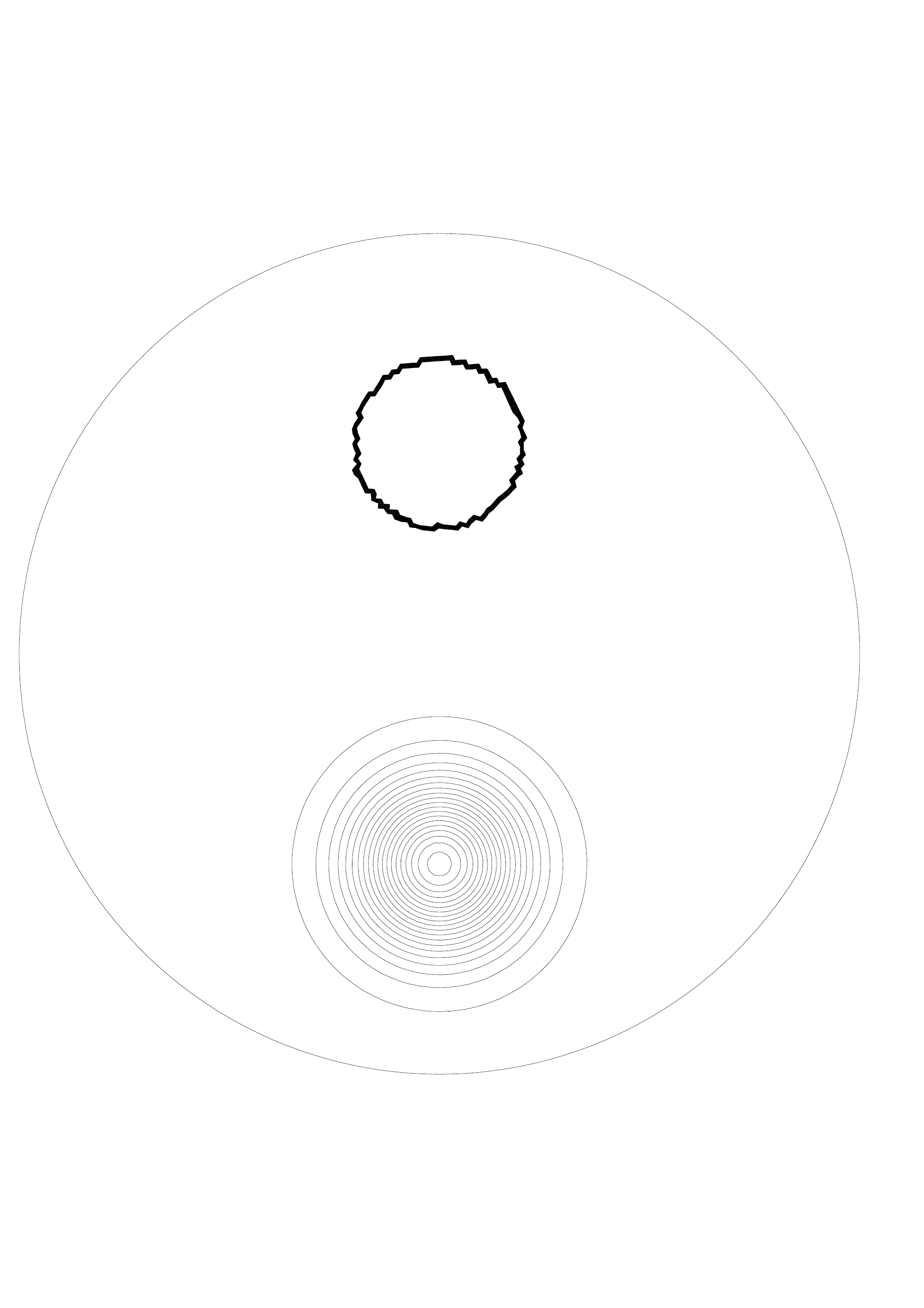} \hspace{-1.5cm}
\includegraphics[width=0.25\linewidth, angle=90]{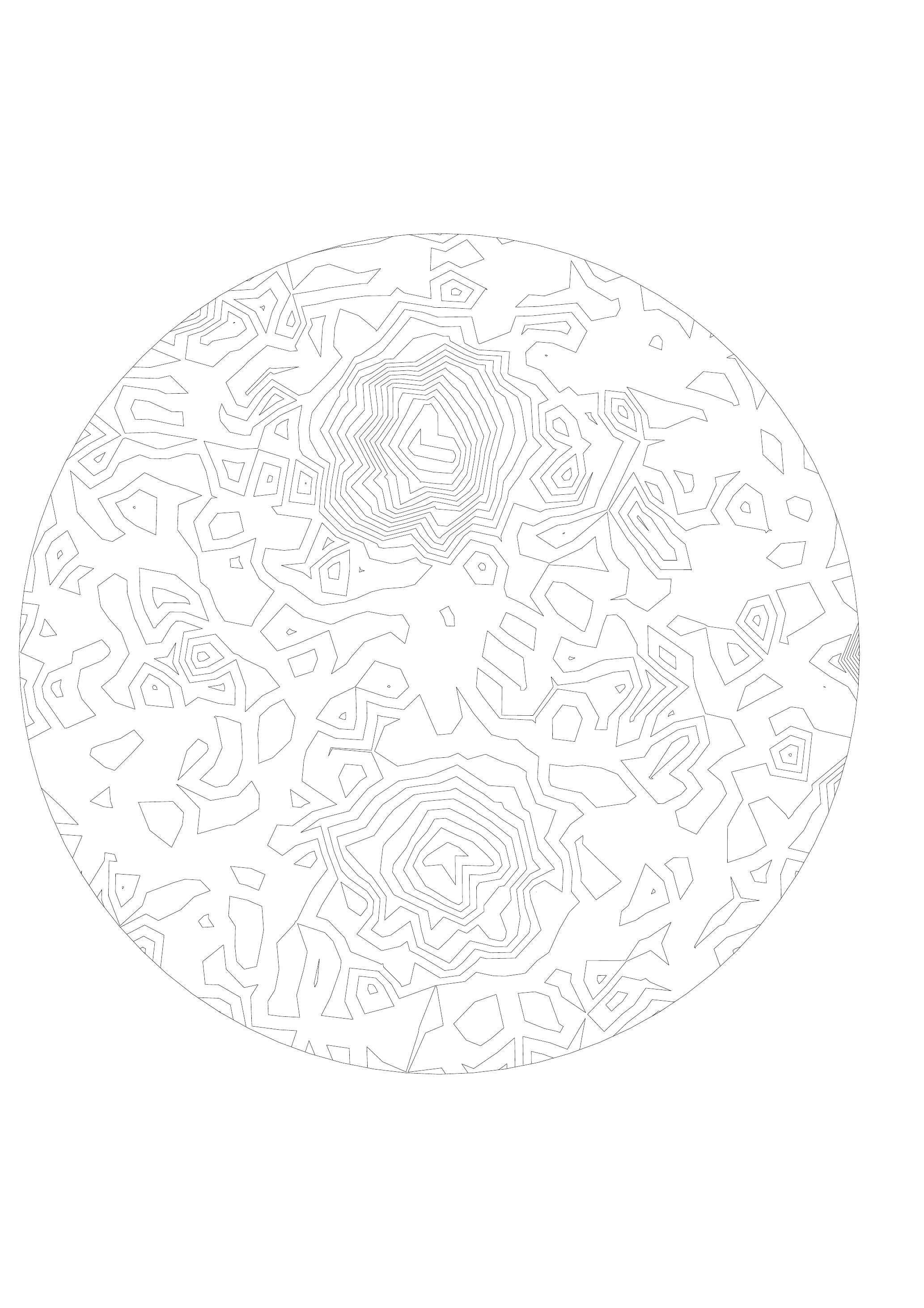}\hspace{-1.5cm}
\includegraphics[width=0.25\linewidth, angle=90]{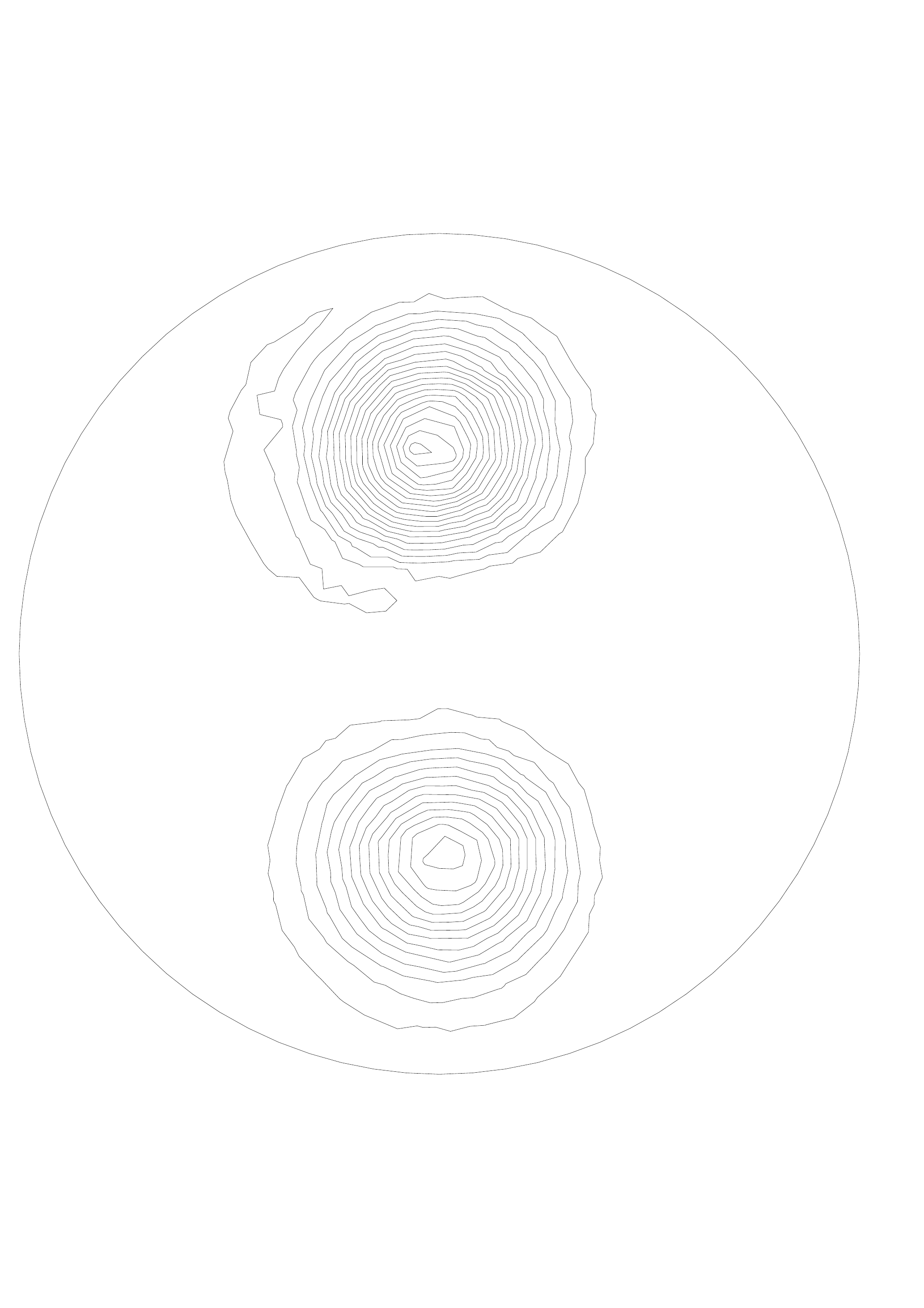}
\caption{From left to right: rough initial data on fine mesh ($u_0 + \tilde u_0$), unstabilized
  solution final time solution (computed using Crank-Nicolson), stabilized final time solution (both $nele=80$, one turn).}
\label{fig:data1}
\end{figure}
We compute the global error in the material derivative over the space
time domain, for BDF2
\[
\left(\tau\sum_{n=2}^N \|D_\tau u_h^{n+1} + \beta \cdot \nabla \tilde
  u_h^{n+1}\|^2 \right)^{\frac12},\quad \mbox{and for AB2, } \left( \tau\sum_{n=2}^N \|\tau^{-1} \delta u_h^{n+1} + \beta \cdot \nabla \hat u_h^{n+\frac12}\|^2 \right)^{\frac12}.
\]
In all graphics the material derivative is indicated by circle markers.
We also report the global
$L^2$-norm of the error at the final time, indicated by square markers. In the case where both the rough and the
smooth initial data are combined we compute the error obtained in the
smooth part, i.e. the $L^2$-norm over $\{(x,y) \in \Omega: x>0\}$. This
local error is indicated by triangle markers.

 In Figure
\ref{fig:data1} we show in the left panel the smooth and rough initial
data ($u_0 + \tilde u_0$). In the
middle panel the solution after one turn without stabilization
(computed using implicit Crank-Nicolson) and in
the right panel the solution after one turn with stabilization, in
both cases 
$nele =80$. We see
that the sharp layers are strongly smeared on this coarse mesh when
the stabilized method is used, but contrary to the unstabilized case
the smooth part of the solution is accurately captured. 

 In Figure \ref{fig:conv_smooth1} we
compare the convergence of the BDF2 and AB2 methods with
$P_1$ and $P_2$ elements for the smooth initial data. 
The convergence rates predicted by theory for both
stabilized methods and approximation spaces are verified
both for the $L^2$-error and in the material derivative. Both methods
have very similar errors,  In Figure \ref{fig:conv_rough2} we see that in the
presence of rough portions in the solution the stabilized methods
still have optimal convergence in the $L^2$-norm in the part where the
solution is smooth. We also observe that thanks to the stabilization
the material derivative has only moderate growth under refinement,
less than the $O(h^{-\frac12})$ predicted by theory. This is known not to be true for cG methods
without stabilization.
\begin{figure}[t]
\centering
\includegraphics[width=0.45\linewidth]{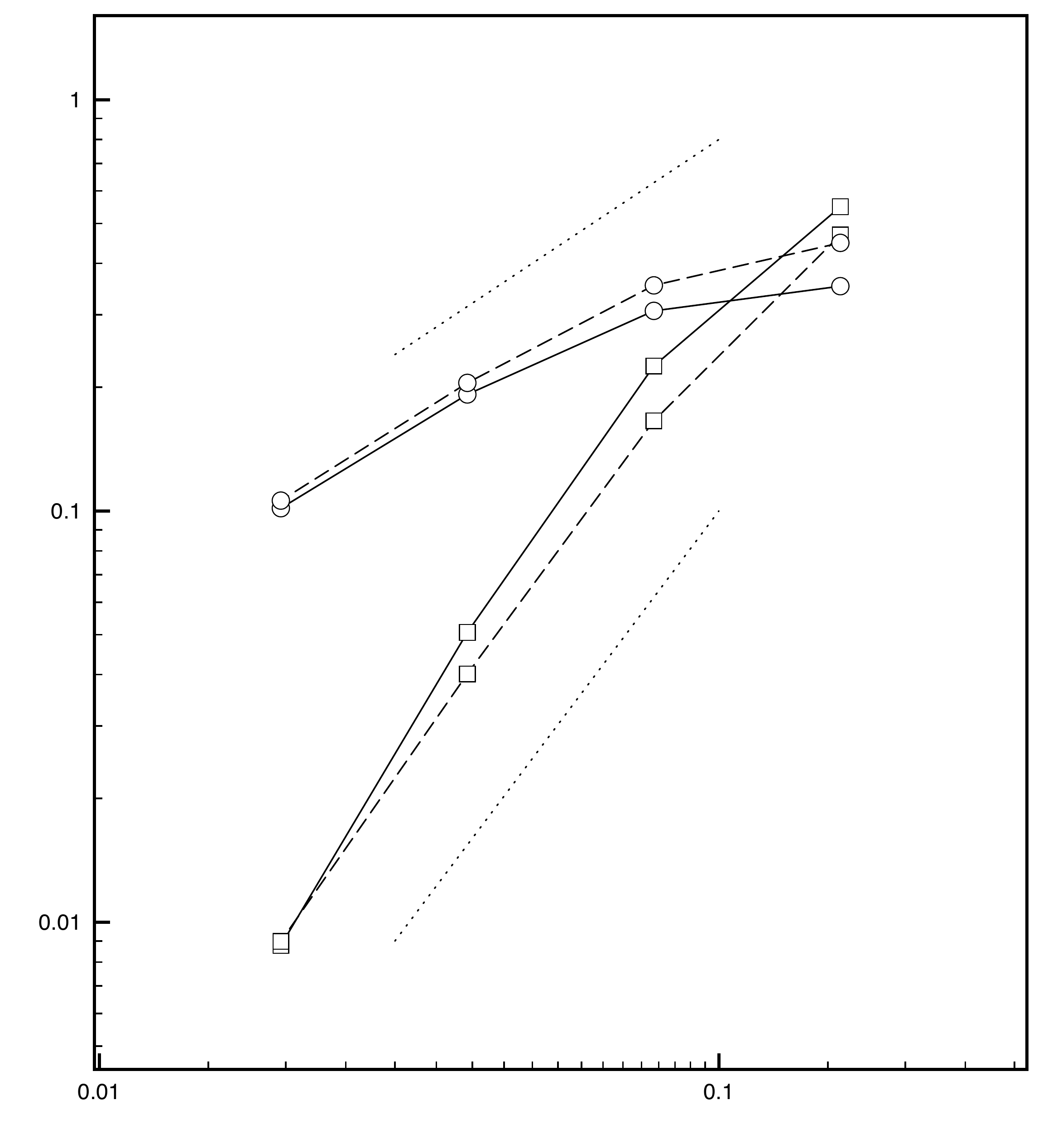}
\includegraphics[width=0.45\linewidth]{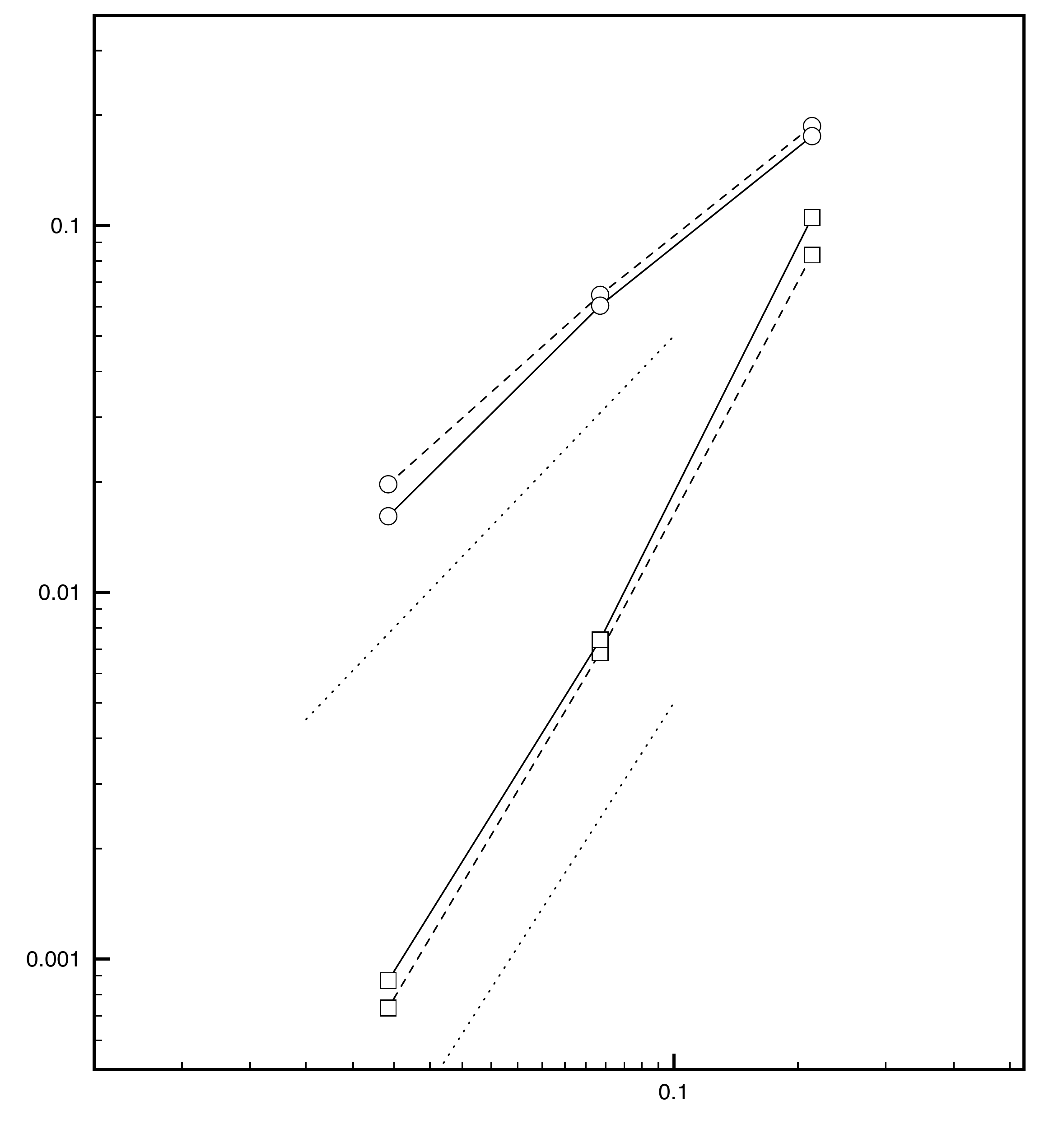}
\caption{Comparison BDF2 (full line) and AB2 (dashed
  line) method with $P_1$ (left) and $P_2$ (right) approximation,
  with globally smooth initial data (equation \eqref{eq:u0_smooth}). The
  error in
  material derivative has square markers. The
 global $L^2$-error has circle markers. The dotted reference lines
 have slope $1,2$ from top to bottom in the left plot and $2,3$ in the
 right plot.}
\label{fig:conv_smooth1}
\end{figure}
\begin{figure}[t]
\centering
\includegraphics[width=0.45\linewidth]{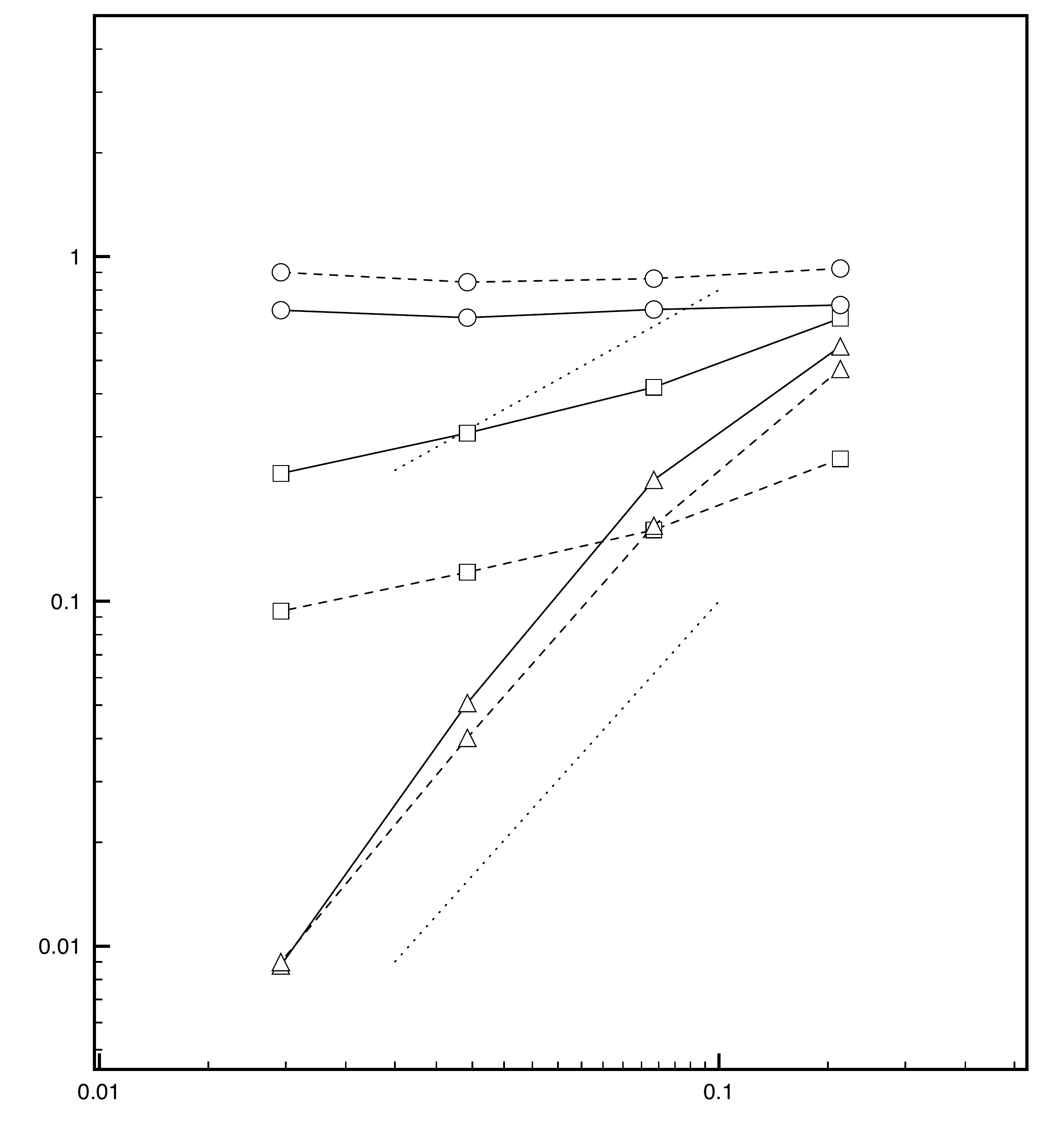}
\includegraphics[width=0.45\linewidth]{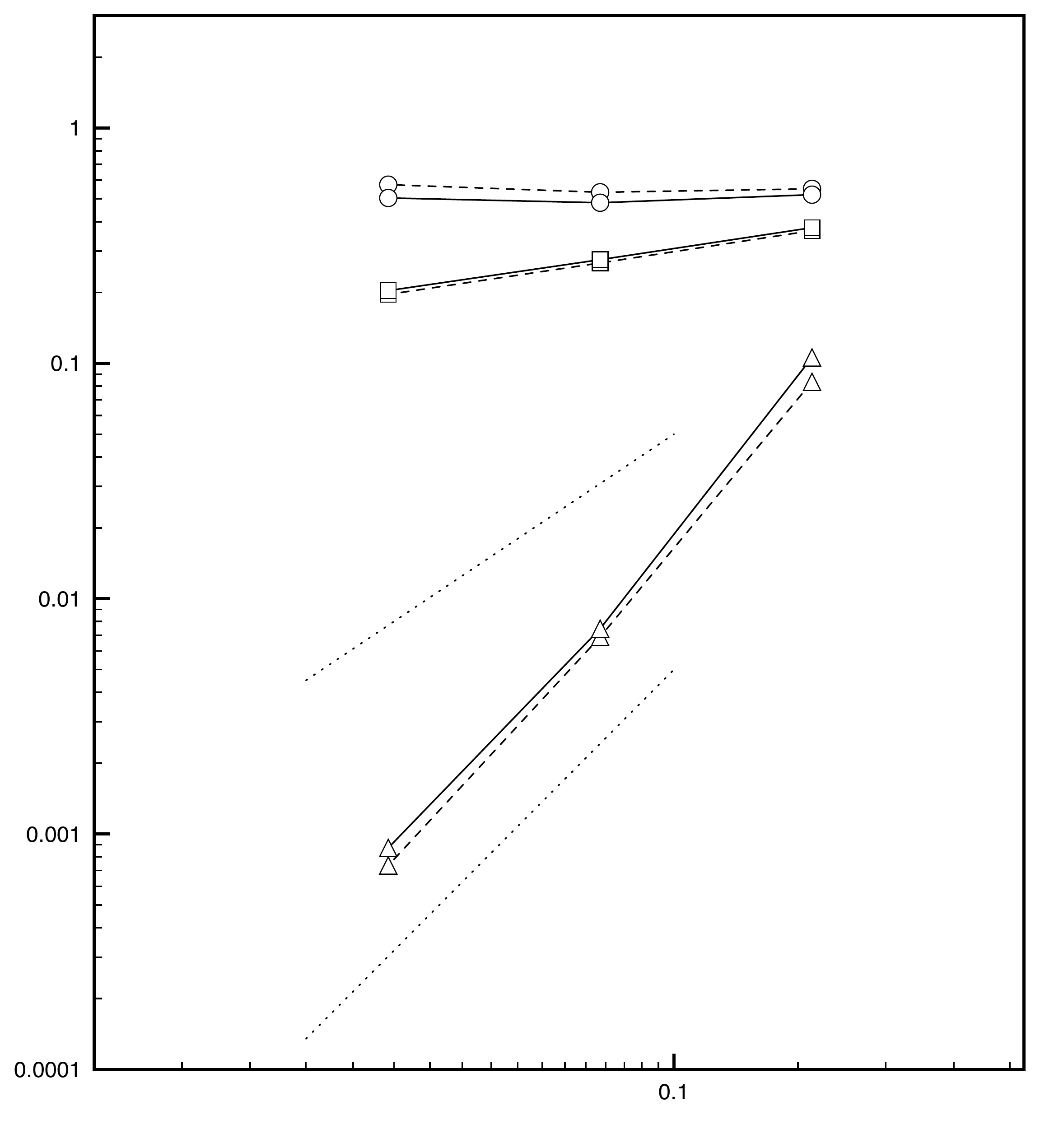}
\caption{Comparison BDF2 (full line) and AB2 (dashed
  line) method with $P_1$ (left) and $P_2$ (right). Initial data
  from figure \ref{fig:data1} (left plot). The
  error in
  material derivative has circle markers. The
 global $L^2$-error has square markers and the local $L^2$-error has triangle markers . The dotted reference lines
 have slope $1,2$  from top to bottom in the left plot and $2,3$ in
 the right.}
\label{fig:conv_rough2}
\end{figure}
\subsection{An example with inflow and outflow and weakly imposed
  boundary conditions}
Here we consider transport in the unit square with $\beta =
(1,0)^T)$. Structured meshes with $nele = 40,80,160,320$ elements on
each side are used. The initial data consists of a cylinder of radius $r=0.2$
centered in the middle of the square and a Gaussian centered on the
left boundary (See Figure \ref{fig:data2}, left plot). The exact
shapes are the same as those of the previous example, \eqref{eq:u0_smooth} and \eqref{eq:u0_rough}. We compute the solution over the interval $(0,1]$ so
that the cylinder leaves the domain at $t=0.7$ and at $t=1$ the
Gaussian is centered at on the right boundary (See Figure \ref{fig:data2}, right plot). Observe that from
$t=0.7$ the solution is smooth. The time dependent inflow boundary
condition is imposed weakly. The convergence of the $L^2$-error at final times for
the BDF2 and AB2  approaches is shown in Figure \ref{fig:conv_tube}
($h=1/nele$, $nele = 40,80,160,320$). We see
that for both methods the $P_1$ and $P_2$ approximations have
optimal convergence to the smooth final time solution, which is known
not to hold for the cG method without stabilization. This will be
verified in the next section.
\begin{figure}[t]
\centering
\hspace{-0.75cm}
\includegraphics[width=0.3\linewidth, angle=90]{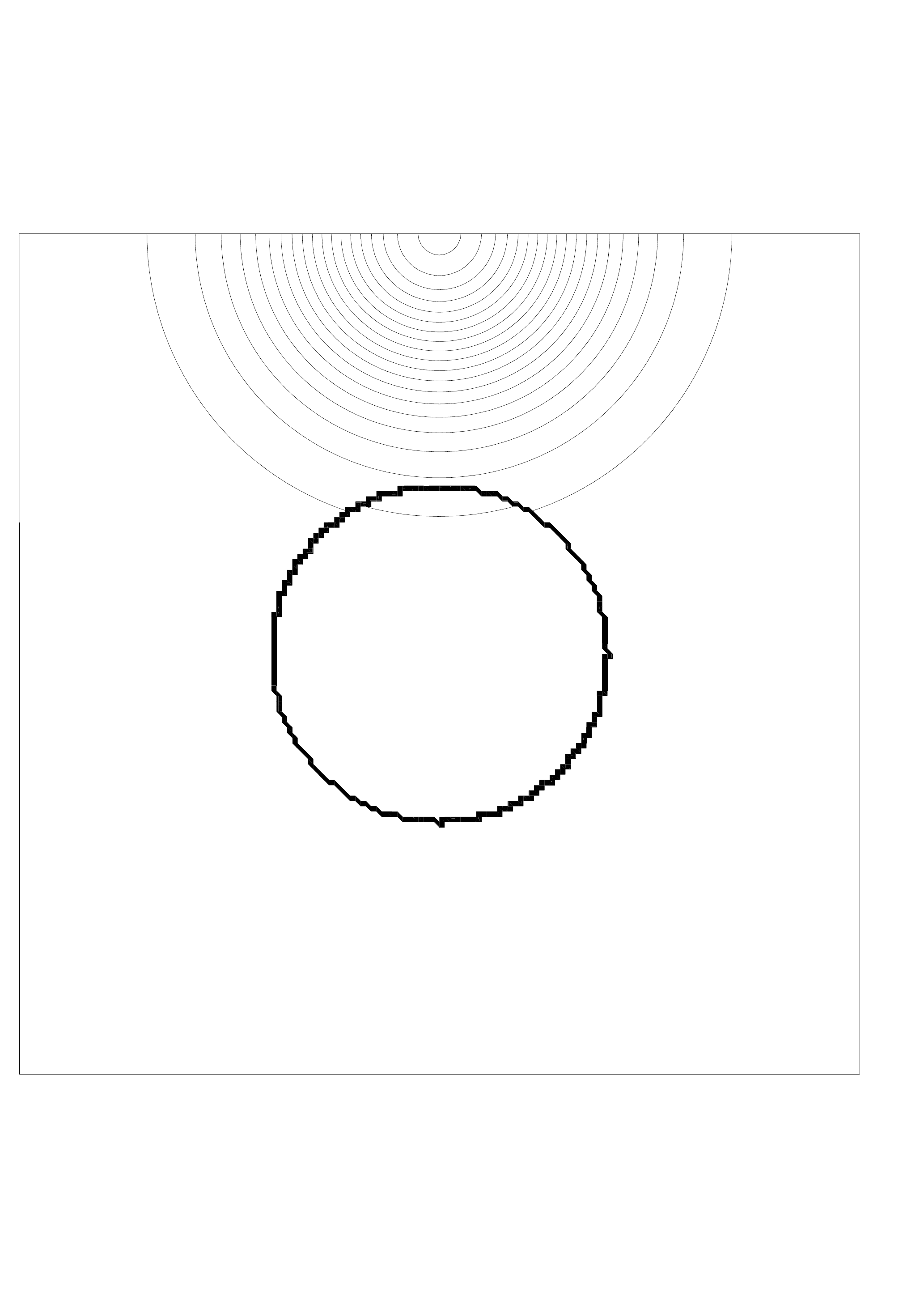} \hspace{-2cm}
\includegraphics[width=0.3\linewidth, angle=90]{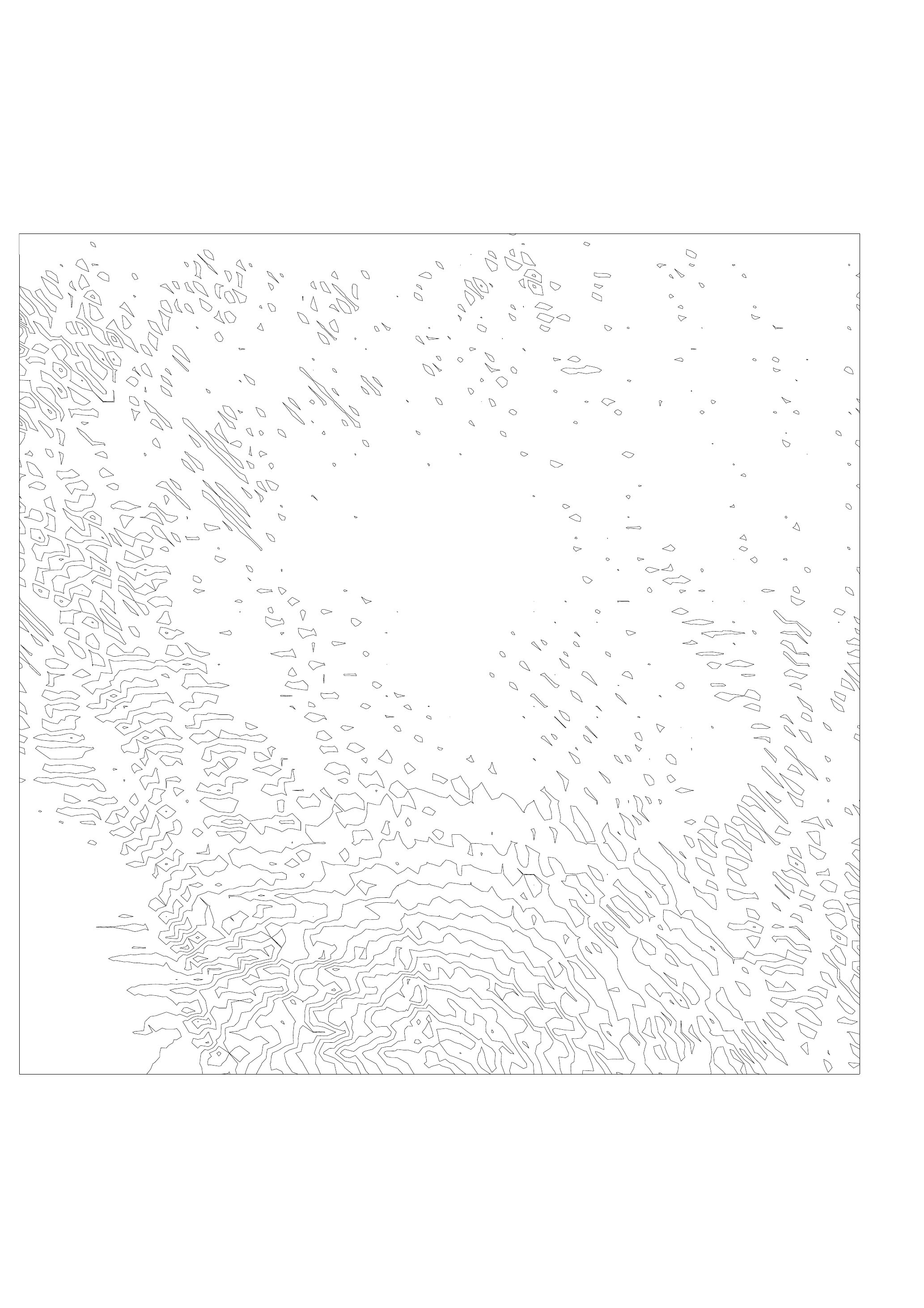}\hspace{-2cm}
\includegraphics[width=0.3\linewidth, angle=90]{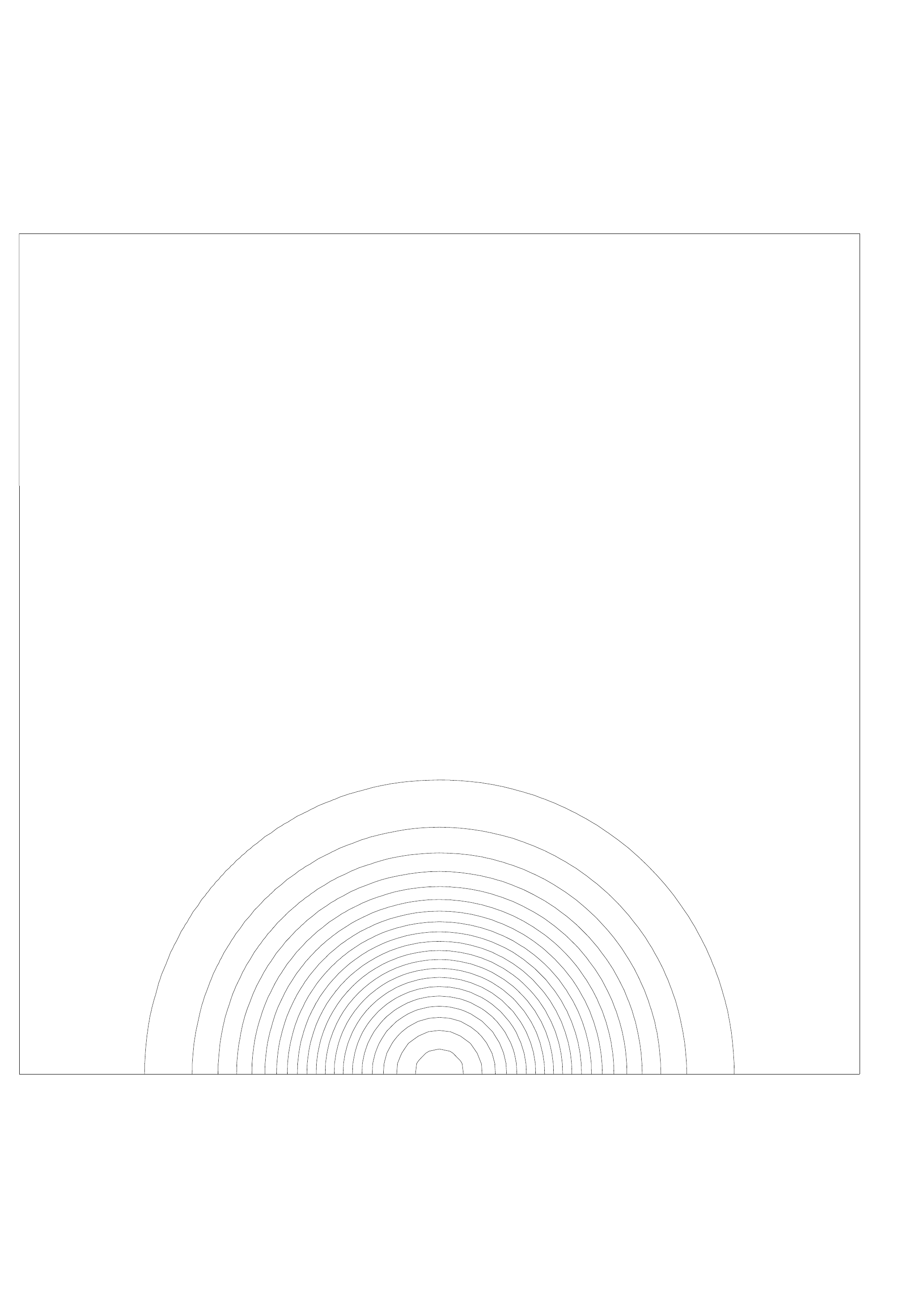}
\caption{From left to right: initial data on fine mesh,   unstabilized
  solution final time solution ($P_1$, computed using Crank-Nicolson), stabilized final time solution (both $nele=80$, final time $t=1$).}
\label{fig:data2}
\end{figure}
\begin{figure}[t]
\centering
\includegraphics[width=0.45\linewidth]{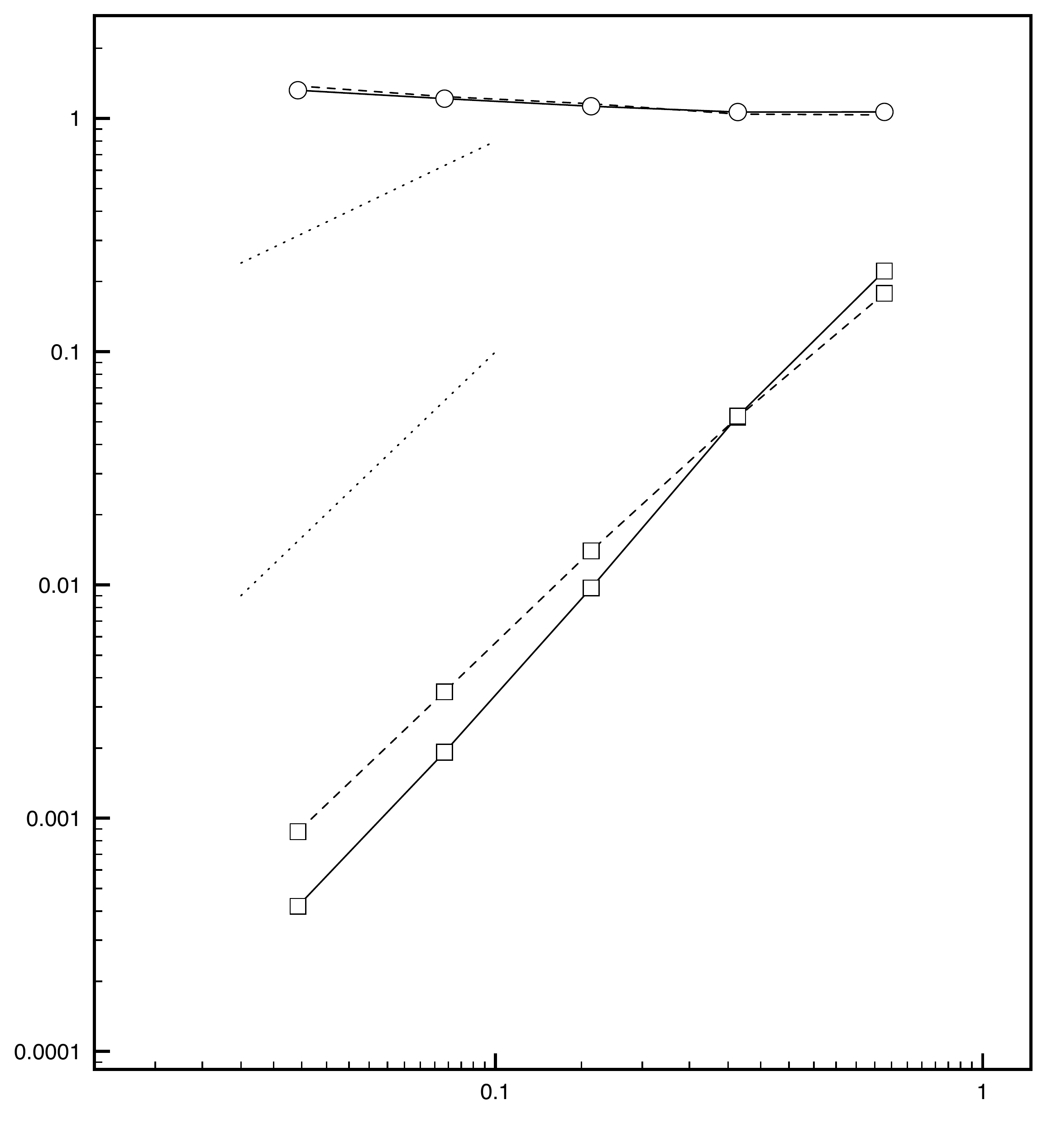}
\includegraphics[width=0.45\linewidth]{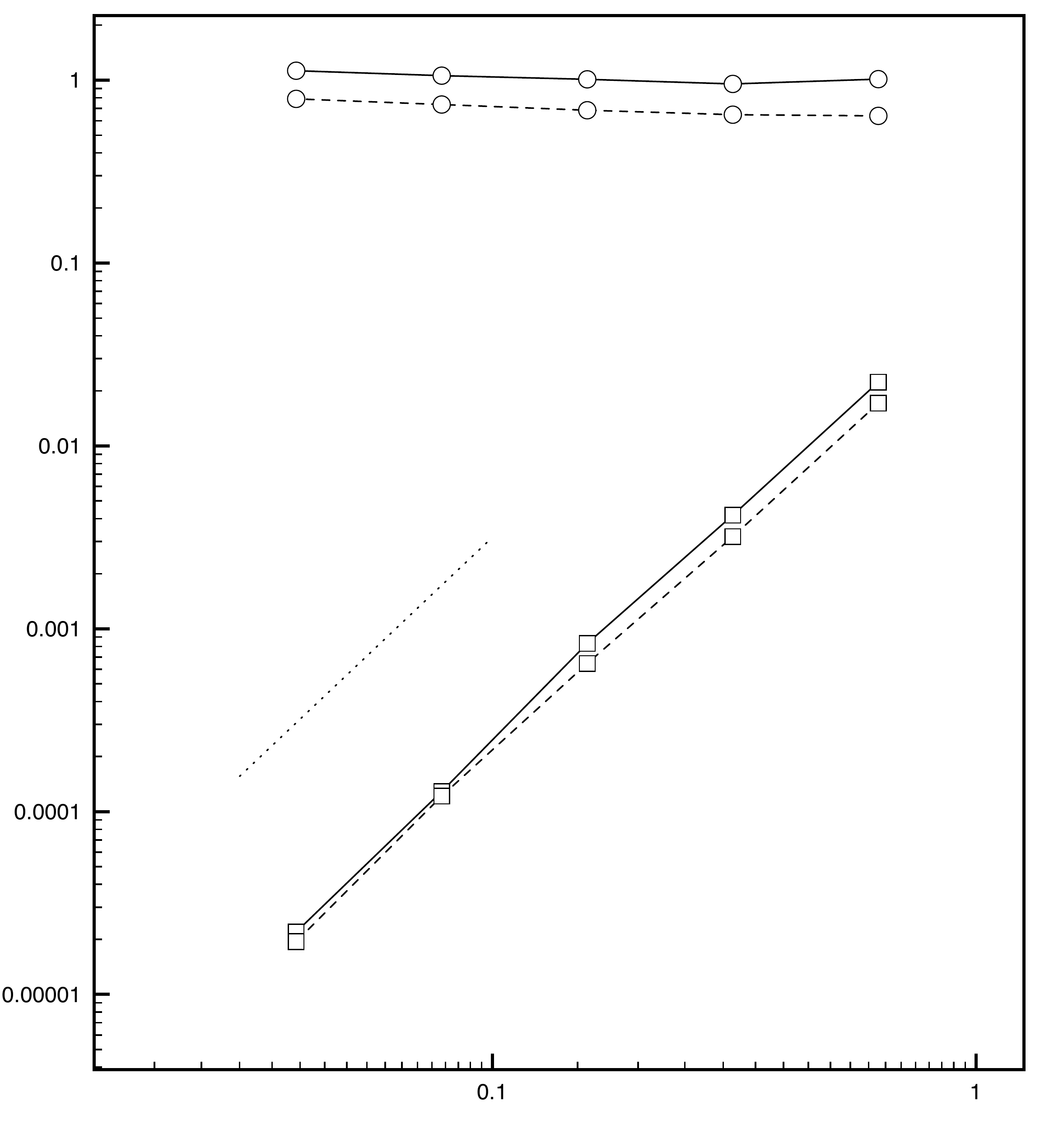}
\caption{Comparison BDF2 (full line) and AB2 (dashed
  line) method with $P_1$ (left) and $P_2$ (right).  Initial data
  from figure \ref{fig:data2} (left plot). The
  error in
  material derivative has circle markers. The
 global $L^2$-error has square markers and the local $L^2$-error has triangle markers. The dotted reference lines
 have slope $1,2$  from top to bottom in the left plot and $2.5$ in
 the right.}
\label{fig:conv_tube}
\end{figure}
\subsection{Higher order time integrator: Adams-Bashforth 3}
Here we consider the same test case as in the previous section, but
using the third order Adam-Bashforth scheme. In this case the scheme
is similar to AB2, but the extrapolation takes the form
\[
\hat v^{n+1} :=  \frac{23}{12} v^{n}-\frac{16}{12} v^{n-1}+ \frac{5}{12} v^{n-2}.
\]
For this test case we compare the results with or without
stabilization. We note that since the scheme has non-trivial imaginary
stability boundary, both the stabilized and unstabilized methods are
expected to be $L^2$-stable. This is also verified by the graphics in
Figure \ref{fig:conv_tube_AB3}. The Galerkin FEM without stabilization is distinguished
by filled markers in the graphics.
In Figure \ref{fig:conv_tube_AB3}, left plot, we present the result for $P_2$
finite elements. It is clear that the solution of the stabilized
method satifies
the $O(h^{2.5})$ bound predicted by theory (illustrated by the lower
  dotted line). Without stabilization the
method has approximately $O(h^{\frac12})$ (upper dotted line) convergence for the smooth
final time solution.

In the right plot we present the result for $P_3$ finite
elements. Also here the stabilized method has the expected
$O(h^{3.5})$ convergence (illustrated by the lower
  dotted line) and the unstabilized method fails to capitalize on the
  increased order of the method. Its order remains at $O(h^{\frac12})$
  (upper dotted line). As a consequence the stabilized method is more
  accurate by more than six orders of magnitude on the finest mesh.
\begin{figure}[t]
\centering
\includegraphics[width=0.45\linewidth]{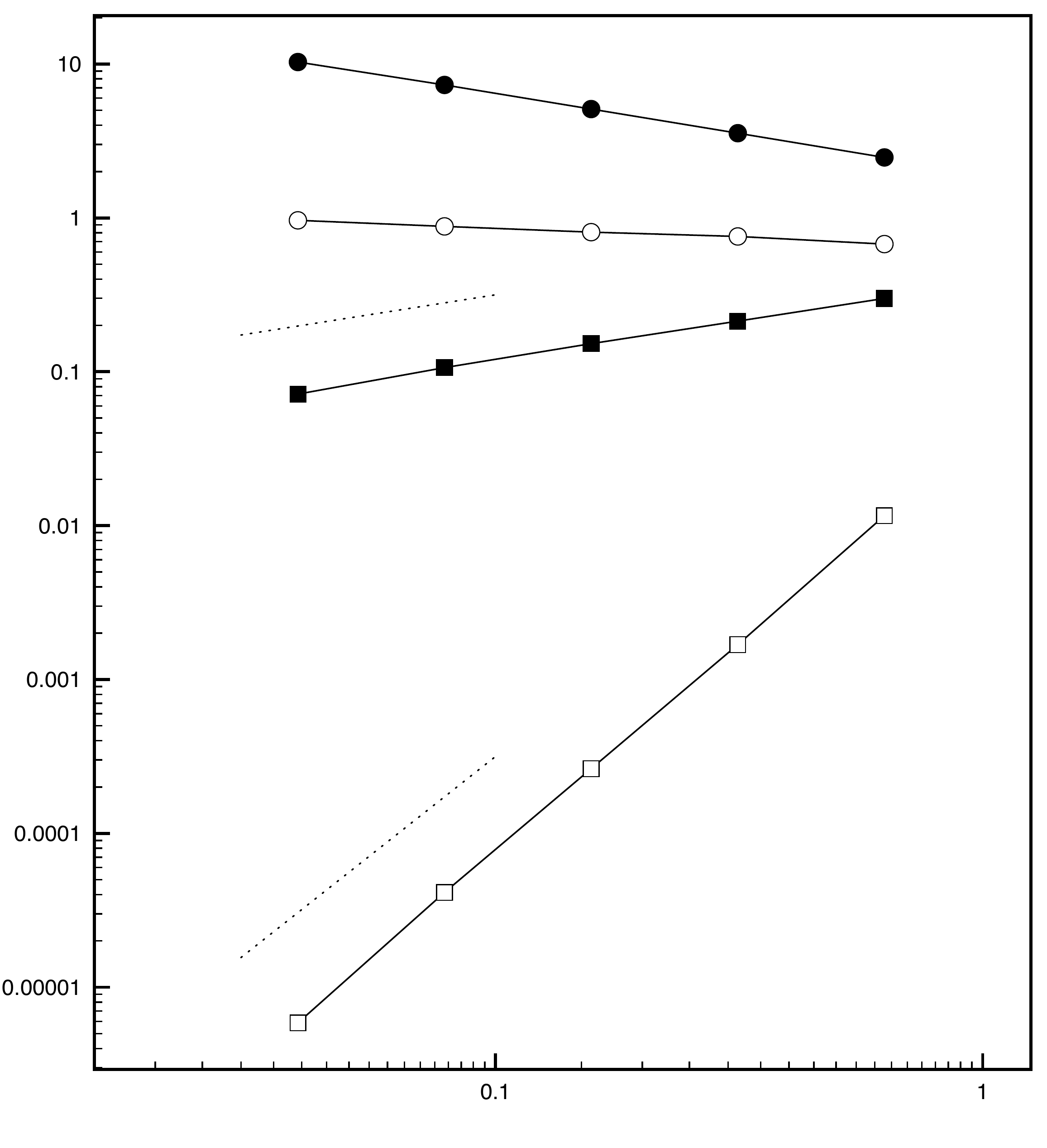}
\includegraphics[width=0.45\linewidth]{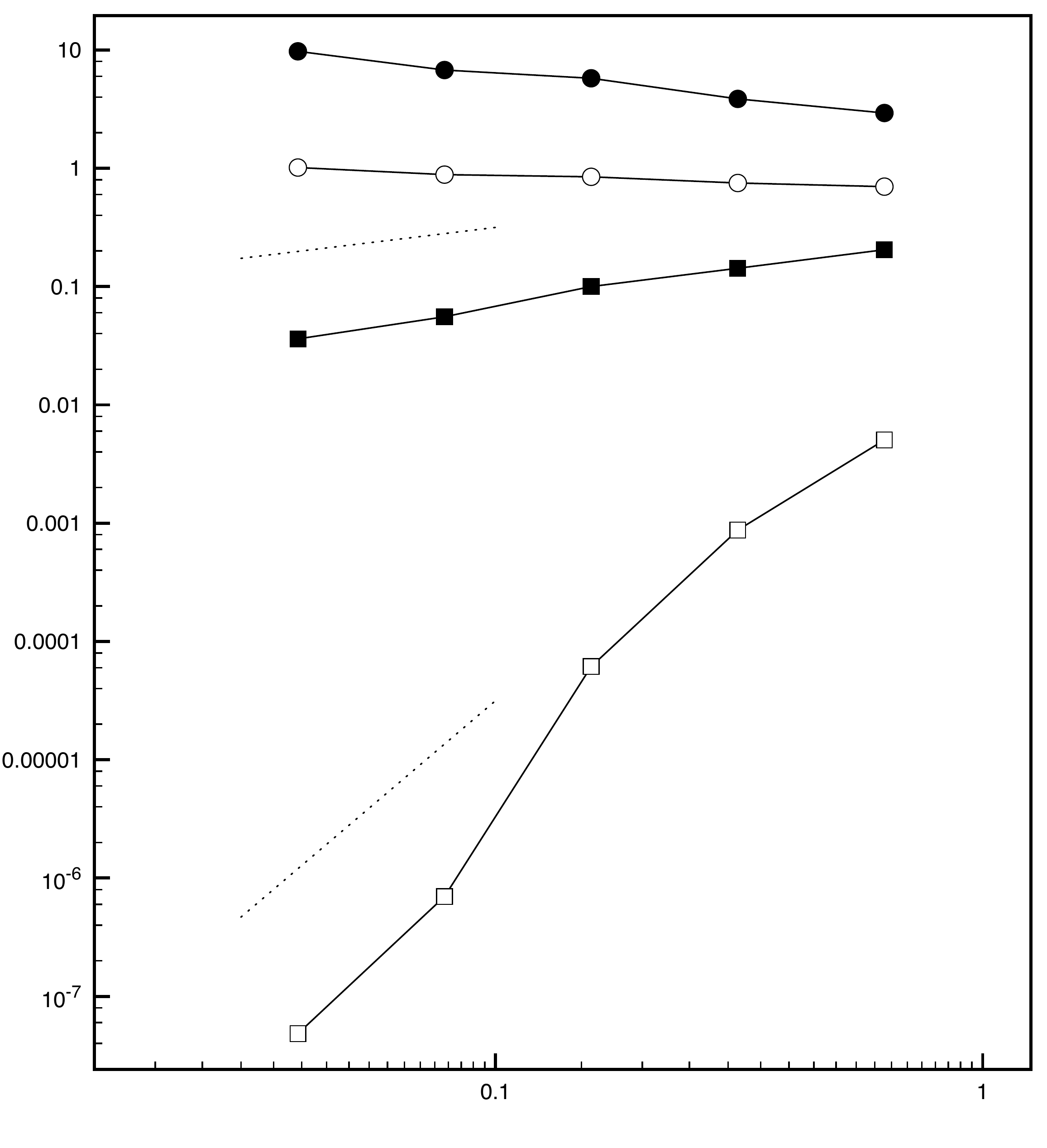}
\caption{Comparison AB3 method with $P_2$ (left plot) and $P_3$ (right
  plot) polynomial approximation.  Initial data
  from figure \ref{fig:data2} (left plot). The
  error in
  material derivative has circle markers. The
 global $L^2$-error has square markers. Filled markers indicate that
  no stabilization has been used. The upper dotted reference
 lines have slope $0.5$ in both graphics and the lower ones 
 have slopes $2.5$ (left) and $3.5$ (right).}
\label{fig:conv_tube_AB3}
\end{figure}
\section{Concluding remarks}
In this paper we have considered the use of implicit-explicit time
integrators together with symmetric stabilization methods. An
important advantage is that the often nonlinear convection term is
handled explicitly as well as the stabilization, which otherwise is
known to extend the system matrix. Two second order methods were
considered that are appealing in applications for their simplicity,
but that have trivial imaginary stability boundary in the limit of
vanishing diffusion. We prove that nevertheless these methods can be
used together with stabilized FEM (or upwind discontinuous Galerkin
method) under CFL conditions that allow for an optimal matching of
errors in space and time. The present work opens for several
interesting research venues such as the use of predictor-corrector
methods \cite{GFR15} in combination with stabilized FEM for first
order pde, or higher
order IMEX-schemes based on Adams-Bashforth/Adams-Moulton combinations
for singularly perturbed second order systems such as
convection--diffusion or the Navier-Stokes' equations.
\bibliographystyle{plain}
\bibliography{references-2}
\end{document}